\author{Benoit}
\title{ratedis}
\newtheorem{theorem}{Theorem}[section]
\newtheorem{lemma}[theorem]{Lemma}
\newtheorem{proposition}[theorem]{Proposition}
\newtheorem{corollary}[theorem]{Corollary}
\newtheorem{remark}[theorem]{Remark}
\newenvironment{proof}[1][Proof]{\begin{trivlist}
\item[\hskip \labelsep {\bfseries #1}]}{\end{trivlist}}
\newenvironment{definition}[1][Definition]{\begin{trivlist}
\item[\hskip \labelsep {\bfseries #1}]}{\end{trivlist}}
\newcommand{\modd}[1]{\vert #1\vert^2}
\newcommand{\nablag}{\nabla^{\gamma}}
\newcommand{\N}{\mathbb{N}}
\newcommand{\psia}{\psi_{(\alpha)}}
\newcommand{\nablamug}{\nabla^{\mu,\gamma}}
\newcommand{\Vd}{\underline{V}}
\newcommand{\ds}{\displaystyle}
\newcommand{\D}{\vert D^\gamma\vert}
\newcommand{\R}{\mathbb{R}}
\newcommand{\B}{\mathfrak{P}}
\newcommand{\rt}{\underline{\mathfrak{a} }}
\newcommand{\dt}{\partial_t}
\def \epsilon {\varepsilon}
\newcommand{\zetaa}{\zeta_{(\alpha)}}
\newcommand{\xig}{\vert\xi^\gamma\vert}
\newcommand{\E}{\mathcal{E}}
\renewcommand{\S}{\mathcal{S}}
\newcommand{\Go}{\mathcal{G}_0}
\newcommand{\G}{\mathcal{G}}
\renewcommand{\S}{\mathcal{S}}
\def \epsilon {\varepsilon}
\begin{document}
\title{The singular limit of the Water-Waves equations in the rigid lid regime}
\author{Mésognon-Gireau Benoît\footnote{UMR 8553 CNRS, Laboratoire de Mathématiques et Applications de l'Ecole Normale Supérieure, 75005 Paris, France. Email: benoit.mesognon-gireau@ens.fr}}
\date{}
\maketitle
\begin{abstract} The re-scaled Water-Waves equations depend strongly on the ratio $\epsilon$ between the amplitude of the wave and the depth of the water. We investigate in this paper the convergence as $\epsilon$ goes to zero of the  free surface Euler equations  to the so called rigid lid model. We first prove that the only solutions of this model are zero. Due to the conservation of the Hamiltonian, the solutions of the free surface Euler equations converge weakly to zero, but not strongly in the general case, as $\epsilon$ goes to zero. We then study this default of convergence. More precisely, we show a strong convergence result of the solutions of the water waves equations in the Zakharov-Craig-Sulem formulation to the solutions of the linear water-waves equations. It is then easy to observe these latter converge weakly to zero. The simple structure of this system also allows us to explain the mechanisms of the weak convergence to zero. Finally, we show that this convergence to the rigid lid model also holds for the solutions of the Euler equations. To this end we give a new proof of the equivalence of the free surface Euler equations and of the Zakharov-Craig-Sulem equation by building an extension of the velocity and pressure fields. \end{abstract}

\section{Introduction}
We recall here classical formulations of the Water Waves problem. We then shortly introduce the meaningful dimensionless parameters of this problem, and introduce the so called rigid lid equations. We then present the main result of this paper. 
\subsection{Formulations of the Water Waves problem}
The Water Waves problem puts the motion of a fluid with a free surface into equations. We recall here two equivalent formulations of the Water Waves equations for an incompressible and irrotationnal fluid. 
\subsubsection{Free surface $d$-dimensional Euler equations} 
The motion, for an incompressible, inviscid and irrotationnal fluid occupying a domain $\Omega_t$ delimited below by a flat bottom and above by a free surface is described by the following quantities:
\begin{itemize}[label=--,itemsep=0pt]
\item the velocity of the fluid $U=(V,w)$, where $V$ and $w$ are respectively the horizontal and vertical components;
 \item the free top surface profile $\zeta$;
 \item the pressure $P.$ 
\end{itemize}
All these functions depend on the time and space variables $t$ and $(X,z) \in\Omega_t$. The domain of the fluid at the time $t$ is given by  $$\Omega_t = \lbrace (X,z)\in\mathbb{R}^{d+1},-H_0 < z <\zeta(t,X)\rbrace,$$ where $H_0$ is the typical depth of the water. The unknowns $(U,\zeta,P)$ are governed by the Euler equations: 
\begin{align}
\begin{cases}
\partial_t U +  U\cdot\nabla_{X,z}U = - \frac{1}{\rho}\nabla P -ge_z\text{ in } \Omega_t\\
\mbox{\rm div}(U) = 0 \text{ in } \Omega_t\\
\mbox{\rm curl}(U) = 0 \text{ in } \Omega_t .
\label{c4:euler}
\end{cases}\end{align}
\par We denote here $-ge_z$ the acceleration of gravity, where $e_z$ is the unit vector in the vertical direction, and $\rho$ the density of the fluid. Here, $\nabla_{X,z}$ denotes the $d+1$ dimensional gradient with respect to both variables $X$ and $z$. \par  \vspace{\baselineskip}
 These equations are completed by boundary conditions: 
\begin{align}
\begin{cases}
\partial_t \zeta +\underline{V}\cdot\nabla\zeta - \underline{w} = 0  \\
U\cdot n = 0 \text{ on } \lbrace z=-H_0\rbrace \\
P=P_{atm}  \text{ on } \lbrace z=\zeta(t,X)\rbrace. \label{c4:boundary_conditions}
\end{cases}
\end{align}
In these equations, $\underline{V}$ and $\underline{w}$ are the horizontal and vertical components of the velocity evaluated at the surface. The vector $n$ in the last equation stands for the normal upward vector at the bottom $(X,z=-H_0)$. We denote $P_{atm}$ the constant pressure of the atmosphere at the surface of the fluid. The first equation of \eqref{c4:boundary_conditions} states the assumption that the fluid particles do not cross the surface, while the last equation of \eqref{c4:boundary_conditions} states the assumption that they do not cross the bottom. The equations \eqref{c4:euler} with boundary conditions \eqref{c4:boundary_conditions} are commonly referred to as the free surface Euler equations.
\subsubsection{Craig-Sulem-Zakharov formulation} 
Since the fluid is by hypothesis irrotational, it derives from a scalar potential: $$U = \nabla_{X,z} \Phi.$$  Zakharov remarked in \cite{zakharov} that the free surface profile $\zeta$ and the potential at the surface $\psi = \Phi_{\vert z=\zeta}$ fully determine the motion of the fluid, and gave an Hamiltonian formulation of the problem. Later, Craig-Sulem, and Sulem (\cite{craigsulem1} and \cite{craigsulem2}) gave a formulation of the Water Waves equation involving the Dirichlet-Neumann operator. The following Hamiltonian system is equivalent (see \cite{david} and \cite{alazard} for more details) to the free surface Euler equations \eqref{c4:euler} and \eqref{c4:boundary_conditions}:
\begin{align}\begin{cases}
        \displaystyle{\partial_t \zeta -  G\psi = 0} \\
        \displaystyle{\partial_t \psi + g\zeta + \frac{1}{2}\vert\nabla\psi\vert^2 - \frac{(G\psi +\nabla\zeta\cdot\nabla\psi)^2}{2(1+\mid\nabla\zeta\mid^2)}=0, }\label{c4:ww_equation} 
\end{cases}\end{align}
where the unknowns are $\zeta$ (free surface profile) and $\psi$ (velocity potential at the surface) with $t$ as time variable and  $X\in\mathbb{R}^d$ as space variable. The fixed bottom profile is $b$, and $G$ stands for the Dirichlet-Neumann operator, that is 

\begin{equation*}
G\psi = G[\zeta]\psi = \sqrt{1+ \modd{\nabla\zeta}} \partial_n \Phi_{\vert z=\zeta},
\end{equation*} 
where $\Phi$ stands for the potential, and solves Laplace equation with Neumann (at the bottom) and Dirichlet (at the surface) boundary conditions:
\begin{align}\begin{cases}
\Delta_{X,z} \Phi = 0 \quad \text{in }  \lbrace (X,z)\in\R^d\times\R, -H_0+ b(X) < z < \zeta(X)\rbrace \\
\phi_{\vert z=\zeta} = \psi,\quad \partial_n \Phi_{\vert z=-H_0} = 0, \label{c4:dirichlet}
\end{cases}\end{align}
with the notation, for the normal derivative $$\partial_n \Phi_{\vert z=-H_0} = \nabla_{X,z}\Phi(X,-H_0)\cdot n$$ where $n$ stands for the normal upward vector at the bottom $(X,-H_0)$. See also \cite{david} for more details.

%\subsubsection{Water Waves with surface tension}
%%There are many ways to treat the Water Waves problem with surface tension. In \cite{david2} and \cite{rousset}, the motion of a two-fluid interfaces is investigated and surface tension is taken into account. More precisely, \cite{david2} gives a stability condition for the two-fluid equations. The water-waves problem is a particular case of these two-fluid equations where the density of the upper fluid is set to zero. 
%In the presence of surface tension, the only difference with the classical Euler problem \eqref{c4:euler} is that the pressure condition at the surface is changed into $$P-P_{atm} = \sigma \kappa(\zeta) \text{ on } \lbrace z=\zeta(t,X)\rbrace,$$ where $\sigma$ denotes the surface tension coefficient, and $\kappa(\zeta)$ is the mean curvature at the surface $$\kappa(\zeta) = -\nabla\cdot(\frac{\nabla\zeta}{\sqrt{1+\vert\nabla\zeta\vert^2}}).$$ The Zakharov/Craig-Sulem formulation has therefore to be changed into 
%
%\begin{align}\begin{cases}
%        \displaystyle{\partial_t \zeta -  G\psi = 0} \\
%        \displaystyle{\partial_t \psi + g\zeta + \vert\nabla\psi\vert^2 - \frac{(G\psi +\nabla\zeta\cdot\nabla\psi)^2}{2(1+\mid\nabla\zeta\mid^2)}=-\frac{\sigma}{\rho}\kappa(\zeta). } \label{c4:ww_equation}
%\end{cases}\end{align}

\subsubsection{Dimensionless equations}
Since the properties of the solutions depend strongly on the characteristics of the fluid, it is more convenient to non-dimensionalize the equations by introducing some characteristic lengths of the wave motion:
\begin{enumerate}[itemsep=0pt,label=(\arabic*)]
\item The characteristic water depth $H_0$;
\item The characteristic horizontal scale $L_x$ in the longitudinal direction;
\item The characteristic horizontal scale $L_y$ in the transverse direction (when $d=2$);
\item The size of the free surface amplitude $a_{surf}$;
\end{enumerate}
Let us then introduce the dimensionless variables: $$x'=\frac{x}{L_x},\quad y'=\frac{y}{L_y},\quad \zeta'=\frac{\zeta}{a_{surf}},\quad z'=\frac{z}{H_0},$$ and the dimensionless variables: $$t'=\frac{t}{t_0},\quad \Phi'=\frac{\Phi}{\Phi_0},$$ where $$t_0 = \frac{L_x}{\sqrt{gH_0 }},\quad \Phi_0 = \frac{a_{surf}}{H_0}L_x \sqrt{gH_0}.$$ 

After rescaling, several dimensionless parameters appear in the equation. They are 
\begin{align*}
\frac{a_{surf}}{H_0} = \epsilon, \quad \frac{H_0^2}{L_x^2} = \mu,\quad \frac{L_x}{L_y} = \gamma,
\end{align*}
where  $\epsilon,\mu,\gamma$ are commonly referred to respectively as "nonlinearity", "shallowness" and "transversality" parameters.\par
\vspace{\baselineskip}

For instance, the Zakharov-Craig-Sulem system (\ref{c4:ww_equation}) becomes (see \cite{david} for more details) in dimensionless variables (we omit the "primes" for the sake of clarity): 
\begin{align}\begin{cases}
        \displaystyle{\partial_t \zeta - \frac{1}{\mu} G_{\mu,\gamma}[\epsilon\zeta]\psi = 0 }\\
        \ds \partial_t \psi + \zeta + \frac{\epsilon}{2}\vert\nablag\psi\vert^2 - \frac{\epsilon}{\mu}\frac{(G_{\mu,\gamma}[\epsilon\zeta]\psi +\epsilon\mu\nablag\zeta\cdot\nablag\psi)^2}{2(1+\epsilon^2\mu\mid\nablag\zeta\mid^2)}=0,  \label{c4:ww_equation1}
\end{cases}\end{align} where $G_{\mu,\gamma}[\epsilon\zeta]\psi$ stands for the dimensionless  Dirichlet-Neumann operator, 

\begin{equation*}
G_{\mu,\gamma}[\epsilon\zeta]\psi = \sqrt{1+\epsilon^2 \modd{\nablag\zeta}} \partial_n \Phi_{\vert z=\epsilon\zeta} =  (\partial_z\Phi-\mu\nabla^{\gamma}(\epsilon \zeta)\cdot\nabla^{\gamma}\Phi)_{\vert z=\epsilon\zeta},
\end{equation*} 
where $\Phi$ solves the Laplace equation with Neumann (at the bottom) and Dirichlet (at the surface) boundary conditions 
\begin{equation}\left\{\begin{aligned}
\Delta^{\mu,\gamma} \Phi = 0 \quad \text{in }  \lbrace (X,z)\in\R^d\times\R -1 < z < \epsilon\zeta(X)\rbrace \\
\phi_{\vert z=\epsilon\zeta} = \psi,\quad \partial_n \Phi_{\vert z=-1} = 0.
\end{aligned}\right.\label{c4:dirichletneumannnondim}\end{equation}
We used the following notations:
\begin{align*}
&\nabla^{\gamma} = {}^t(\partial_x,\gamma\partial_y) \quad &\text{ if } d=2\quad &\text{ and } &\nabla^{\gamma} = \partial_x &\quad \text{ if } d=1 \\
&\Delta^{\mu,\gamma} = \mu\partial_x^2+\gamma^2\mu\partial_y^2+\partial_z^2 \quad &\text{ if } d=2\quad &\text{ and } &\Delta^{\mu,\gamma} = \mu\partial_x^2+\partial_z^2 &\quad \text{ if } d=1
\end{align*}
and  $$ \partial_n \Phi_{\vert z=-1}= (\partial_z\Phi-\mu\nabla^{\gamma}(\beta b)\cdot\nabla^{\gamma}\Phi)_{\vert z=-1 }.$$ 

\subsection{The rigid lid model}

 The Euler system \eqref{c4:euler},\eqref{c4:boundary_conditions}, can be written in the dimensionless variables:

\begin{align}\begin{cases}
\displaystyle{\partial_t V +\epsilon (V\cdot\nabla^{\gamma} + \frac{1}{\mu}w\partial_z)V =  -\nabla^{\gamma} P} \text{ in } \Omega_t \\
\displaystyle{\partial_t w + \epsilon (V\cdot\nabla^{\gamma} + \frac{1}{\mu}w\partial_z)w =  -(\partial_z P) }\text{ in } \Omega_t \\
\displaystyle{\partial_t \zeta +\epsilon\underline{V}\cdot\nabla^{\gamma}\zeta - \frac{1}{\mu}\underline{w} = 0 } \\
\nabla^{\mu,\gamma}\cdot U = 0 \text{ in } \Omega_t \\
\mbox{\rm curl}^{\mu,\gamma}(U) = 0 \text{ in } \Omega_t \\
U\cdot n = 0 \text{ for }  z=-1\\
P=\epsilon\zeta \text{ for } z=\epsilon\zeta.\label{c4:eulerystem}
\end{cases}\end{align}
We changed the pressure into hydrodynamic pressure $P=P-(z-\epsilon\zeta)$ and set $P_{atm}=0$.  The rigid lid model models the motion of the fluid as if the top surface was fixed at $z=0$ (see for instance \cite{camassa_holm}). But to derive this model, we do not  brutally take $\epsilon=0$ in the Euler system \eqref{c4:eulerystem} but rather start by a change of time scale, and a change of scale for the pressure:

$$t'=\epsilon t$$ and $$P'=\frac{P}{\epsilon}.$$

Formally, if one takes the limit $\epsilon$ goes to zero in the newly scaled Euler equations, one finds the following so called rigid lid equations (we omit the "primes" for the sake of clarity):
\begin{align}\begin{cases}
\displaystyle{\partial_t V +  (V\cdot\nabla^{\gamma} + \frac{1}{\mu}w\partial_z)V =  -\nabla^{\gamma} P} \text{ in } \Omega \\
\displaystyle{\partial_t w +  (V\cdot\nabla^{\gamma} + \frac{1}{\mu}w\partial_z)w =  -(\partial_z P) }\text{ in } \Omega \\
\underline{w} = 0 \\
\nabla^{\mu,\gamma}\cdot U = 0 \text{ in } \Omega \\
\mbox{\rm curl}^{\mu,\gamma}(U) = 0 \text{ in } \Omega \\
U\cdot n = 0 \text{ for }  z=-1 \\
P=\zeta \text{ for } z=0
\end{cases}\label{c4:rigidlid}\end{align}
where $\Omega$ is now the fixed domain 

$$\Omega = \lbrace (X,z)\in\R^{d+1},-1\leq z\leq 0\rbrace.$$

\subsection{Reminder on the local existence for the Water-Waves equations with flat bottom}
We briefly give some reminders about the local well-posedness theory for the Water-Waves equations and their local existence (see \cite{david} Chapter 4 for a complete study, and also \cite{iguchi2009}). After scaling $$t'=\epsilon t$$ as for the rigid lid equation, one gets from \eqref{c4:ww_equation1} the equations: \begin{align}\begin{cases}
\ds \epsilon\partial_t\zeta-\frac{1}{\mu}G[\epsilon\zeta]\psi = 0 \\
\ds \epsilon\partial_t\psi+\zeta+\frac{\epsilon}{2}\vert\nablag\psi\vert^2-\frac{\epsilon}{\mu}\frac{(G[\epsilon\zeta]\psi+\epsilon\mu\nablag\zeta\cdot\nablag\psi)^2}{2(1+\epsilon^2\mu\vert\nablag\zeta\vert^2}=0.\label{c4:ww_equu}
\end{cases}
\end{align}
As explained above, we prove later that the solutions of the full Water-Waves equations \eqref{c4:ww_equu} converge strongly to the solutions of the linearized equations \eqref{c4:zeta_psi_eq}. The strategy is to treat the non-linear terms of the Water-Waves equations as a perturbation of the linearized equation.\par\vspace{\baselineskip}
Let $t_0>d/2$ and $N\geq t_0+t_0\vee 2 + 3/2$ (where $a\vee b = \sup(a,b)$). The energy for the Water-Waves equations is the following (see Section \ref{c4:notations} for the notations):
$$\mathcal{E}^N(U) = \vert\B\psi\vert_{H^{t_0+3/2}} + \sum_{\vert \alpha\vert\leq N} \vert\zetaa\vert_2 + \vert\psia\vert_2$$
where $\zetaa,\psia$ are the so called Alinhac's good unknowns:
$$\forall \alpha\in\mathbb{N}^d, \zetaa = \partial^\alpha\zeta,\qquad \psia = \partial^\alpha\psi-\epsilon\underline{w}\partial^\alpha\zeta,$$ 
and where $\B$ is the differential operator defined by $$\B = \frac{\D}{(1+\sqrt{\mu}\D)^{1/2}}.$$ The operator $\B$ is of order $1/2$ and acts as the square roots of the Dirichlet-Neumann operator, since there exists $M_0=C(\frac{1}{h_{\min}},\vert\zeta\vert_{H^{t_0+1}})$  where $C$ is a non decreasing function of its arguments, such that \begin{equation}\frac{1}{M_0}\vert\B\vert_2^2\leq (\psi,\frac{1}{\mu}G\psi)_2\leq M_0\vert\B\psi\vert_2^2.\label{c4:equiDN}\end{equation} We consider solutions $U=(\zeta,\psi)$ of the Water-Waves equations in the following space:
\begin{equation*}
E_{T}^N = \lbrace U\in C(\left[ 0,T\right];H^{t_0+2}\times\overset{.}H{}^2(\mathbb{R}^d)), \mathcal{E}^N(U(.))\in L^{\infty}(\left[ 0,T\right])\rbrace.
\end{equation*} The following quantity, called the Rayleigh-Taylor coefficient plays an important role in the Water-Waves problem: \begin{equation*}\rt(\zeta,\psi) = 1+\epsilon(\epsilon\partial_t+\epsilon \underline{V}\cdot\nabla^{\gamma})\underline{w} = -\epsilon\frac{P_0}{\rho a g}(\partial_z P)_{\vert z = \epsilon\zeta}.\label{c4:rtdef}\end{equation*} We can now state the local existence result by Alvarez-Samaniego Lannes (see \cite{alvarez} and \cite{david} Chapter 3):
\begin{theorem}\label{c4:uniform_result}
Let $t_0>d/2$,$N\geq t_0+t_0\vee 2+3/2$. Let $U^0 = (\zeta^0,\psi^0)\in E_0^N$. Let $\epsilon,\gamma$ be such that $$0\leq \epsilon,\gamma\leq 1,$$ and moreover assume that: \begin{equation*}\exists h_{min}>0,\exists a_0>0,\qquad 1+\epsilon\zeta^0\geq h_{min}\quad\text{ and } \quad  \rt(U^0)\geq a_0.\label{c4:rayleigh}\end{equation*} Then, there exists $T>0$ and a unique solution $U^\epsilon\in E_T^N$ to \eqref{c4:ww_equu} with initial data $U^0$. Moreover, $$\frac{1}{T}= C_1,\quad\text{ and }\quad \underset{t\in [0;T]}{\sup} \mathcal{E}^N(U^\epsilon(t)) = C_2$$ with $\ds C_i=C(\mathcal{E}^N(U^0),\frac{1}{h_{min}},\frac{1}{a_0})$ for $i=1,2$.
\end{theorem}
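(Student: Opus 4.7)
The plan is to follow the quasilinear energy method for the water-waves system developed by Alvarez-Samaniego--Lannes. The first step is to rewrite \eqref{c4:ww_equu} as a quasilinear system for the Alinhac good unknowns $(\zetaa,\psia)$. Differentiating the equations with $\partial^\alpha$ and using the shape-derivative formula for $G_{\mu,\gamma}[\epsilon\zeta]$, one sees that the top-order terms in $\partial^\alpha\zeta$ formally cost an extra derivative; the precise role of the shift $\psia = \partial^\alpha\psi - \epsilon\underline{w}\,\partial^\alpha\zeta$ is exactly to cancel those dangerous contributions. The resulting system has the schematic form
\[
\epsilon\partial_t\zetaa - \tfrac{1}{\mu}G_{\mu,\gamma}[\epsilon\zeta]\psia + \epsilon\nablag\cdot(\underline{V}\,\zetaa) = R^1_\alpha,\qquad \epsilon\partial_t\psia + \rt\,\zetaa + \epsilon\underline{V}\cdot\nablag\psia = R^2_\alpha,
\]
where the residuals $R^i_\alpha$ consist of tame commutators that are controllable in $L^2$ by $\E^N(U)$ for $|\alpha|\leq N$.

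The natural energy for each $|\alpha|\le N$ is
\[
\tilde\E_\alpha(U) = \tfrac12\int \rt\,\zetaa^2 + \tfrac12\bigl(\psia,\tfrac{1}{\mu}G_{\mu,\gamma}[\epsilon\zeta]\psia\bigr)_2,
\]
which is coercive by the Rayleigh-Taylor hypothesis $\rt\geq a_0$ and the equivalence \eqref{c4:equiDN}, and whose sum (augmented by the $\B\psi$ control at order $t_0+3/2$) is equivalent to $\E^N(U)^2$. Differentiating in time and substituting the quasilinearized system, the symmetric coupling between $\zetaa$ and $\psia$ cancels; what remains are contributions from $\partial_t\rt$, from $\nablag\cdot\underline V$, and from the residuals $R^i_\alpha$. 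Combining shape-derivative estimates for $G_{\mu,\gamma}[\epsilon\zeta]$ with standard product and commutator estimates in Sobolev spaces yields a differential inequality of the form $\tfrac{d}{dt}\E^N(U)^2 \leq \epsilon\,C(\E^N(U),\tfrac{1}{h_{min}},\tfrac{1}{a_0})$; the crucial $\epsilon$ on the right-hand side is what allows an existence time of order one on the slow time scale, as stated.

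Actual construction of the solution proceeds by regularizing the system with a Friedrichs mollifier $J_\iota$, turning it into an ODE in a fixed Sobolev space which is globally solvable. The energy argument above, applied to the regularized system, yields bounds independent of $\iota$ on a common time interval $[0,T]$. One passes to the limit $\iota\to 0$ using weak-$*$ compactness at top order together with strong compactness (Aubin-Lions) at intermediate order to recover the nonlinear equations, and uniqueness follows from an analogous energy estimate applied to the difference of two solutions at one lower level of regularity.

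The main obstacle is to propagate simultaneously the two non-degeneracy conditions $1+\epsilon\zeta\geq h_{min}/2$ and $\rt(U)\geq a_0/2$ alongside the energy bound, since both enter every step of the argument (the former through elliptic estimates on the Dirichlet-Neumann problem, the latter through coercivity of $\tilde\E_\alpha$). The depth condition is easy: $\epsilon\partial_t\zeta = \tfrac1\mu G_{\mu,\gamma}[\epsilon\zeta]\psi$ is tame in $\E^N(U)$, so shrinking $T$ suffices. The Rayleigh-Taylor condition is delicate because $\rt$ involves $(\partial_z P)_{\vert z=\epsilon\zeta}$, equivalently a derivative of $\partial_t^2\zeta$; the key technical point is to exploit the half-derivative gain encoded by the operator $\B$, together with the shape-derivative identities for $G_{\mu,\gamma}[\epsilon\zeta]$, to show that $\partial_t\rt$ is itself a tame function of $\E^N(U)$, so that $\rt$ varies by at most $O(\epsilon t)$ and the lower bound $\rt\geq a_0/2$ survives on the full existence interval.
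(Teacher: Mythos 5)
The paper does not prove this theorem: it is quoted from Alvarez-Samaniego--Lannes (and Chapter 4 of \cite{david}), so there is no internal proof to compare against. Your sketch faithfully outlines the standard argument of those references --- quasilinearization via Alinhac's good unknowns, the Rayleigh--Taylor-weighted energy together with the Dirichlet--Neumann quadratic form, the $O(\epsilon)$ gain in the energy inequality that produces the $1/\epsilon$ existence time in the original variables (hence $T\sim 1$ after the rescaling $t'=\epsilon t$), and the propagation of the depth and Rayleigh--Taylor conditions --- so it is consistent with the cited proof.
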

\begin{remark} Note that the time existence provided by Theorem \ref{c4:uniform_result} does not depend on $\epsilon$. Actually, the full Theorem in non flat bottom (see \cite{david}) states that the time of existence for the equations in the original scaling \eqref{c4:ww_equation1} is of size $\frac{1}{\epsilon\vee\beta}$ where $\beta$ is the size of the topography. In our case, $\beta=0$ and therefore one gets a time of existence of size $\frac{1}{\epsilon}$ for \eqref{c4:ww_equation1}, and of size $1$ for the rescaled equations \eqref{c4:ww_equu}. In the case of a non flat bottom, this long time result stands true in presence of surface tension (see \cite{benoit}).
\end{remark}

\subsection{Main result}
We investigate in this paper the rigorous limit $\epsilon$ goes to zero in the rescaled Euler equation \eqref{c4:eulerystem} in view of the mathematical justification of the derivation of the rigid lid model \eqref{c4:rigidlid}. However, as one shall see in Section \ref{c4:solution_rigid}, the only solutions of the rigid lid equations are trivially null. Though the rigid lid model does not then seem of much interest, it implies that the solutions of the rescaled Euler equations \eqref{c4:eulerystem}  converge weakly to zero as $\epsilon$ goes to zero. In \cite{bresch_metivier}, the same limit in the rigid lid regime is investigated by Bresch and Métivier for the Shallow-Water equations with large bathymetry, which consist in an asymptotic model for the Water-Waves problem in the shallow water regime ($\mu$ small). They prove that the rescaled equations are well-posed on a time interval independent on $\epsilon$ (which is equivalent to a large time of existence of size $\frac{1}{\epsilon}$ for the system written in the original variables) and they rigorously pass to the limit as $\epsilon$ goes to zero, and prove the weak convergence of the solutions.\par\vspace{\baselineskip} In \cite{benoit}, a similar long time existence result is proved for the Water-Waves equations with large bathymetry and with surface tension. The local existence result \eqref{c4:uniform_result} implies that (we don't give a precise statement) that there exists $T>0$, and a unique solution $(\zeta,\psi)\in C^1([0;\frac{T}{\epsilon}];H^N\times H^{N+1/2}(\R^d))$ to the Water-Waves equations \eqref{c4:ww_equation1}. Moreover, one has: \begin{equation}
\frac{1}{T}= C_1(\zeta_0,\psi_0),\qquad \vert(\zeta,\psi)\vert_{C^1([0;\frac{T}{\epsilon}];H^N\times H^{N+1/2}(\R^d))} \leq C_2(\zeta_0,\psi_0),\label{c4:ww_boundness}
\end{equation}
where $C_i$ are continuous functions of their arguments, and are independent on $\mu,\epsilon$. The bound on the solutions given by \eqref{c4:ww_boundness} is uniform with respect to $\epsilon$, which allows by a compactness argument to extract a convergent sub-sequence of solutions as $\epsilon$ goes to zero. The limit should be zero (at least for $\psi$) as we discussed above. The question is to understand if the convergence is strong in $C([0;T];H^{N-1}\times H^{N-1/2}(\R^d))$, i.e. "globally in time and space". In this paper, we complete the study of the rigid lid limit for the Water-Waves problem, and give an answer to this question. \par\vspace{\baselineskip} 

Since the equations \eqref{c4:ww_equation1} have the structure of a Hamiltonian equation, the following quantity is conserved: \begin{equation}\frac{1}{2\mu}(\G\psi,\psi)_2+(\zeta,\zeta)_2.\label{c4:hamiltonian}\end{equation} We recall that $\G$ is symmetric and positive for the $L^2$ scalar product. Therefore, even after the time change of scale $$t'=\epsilon t$$ and the limit $\epsilon$ goes to zero, the Hamiltonian is conserved and the strong convergence in $L^2$ of the unknowns cannot be (except in particular cases).  In Section \ref{c4:nonstrongconvergence}, we precisely try to highlight the default of compactness that prevents the strong convergence in dimension $1$, in presence of a flat bottom. As one shall see, the default comes from the linear operator of the Water-Waves equations which has quite a similar behavior to the wave equation. In Section \ref{c4:equivalence}, we prove the equivalence between the Water-Waves equations and the Euler equations, which completes the study of the weak but not strong convergence to zero in Sobolev spaces for the solutions of the Euler equation \eqref{c4:eulerystem} in the rigid lid regime. To sum up, we give here the plan of this article: \begin{itemize}[label=--,itemsep=0pt]
\item In Section \ref{c4:nonstrongconvergence}, we prove that the solutions of the linearized equation does not converge strongly in $L^2(\R^d)$ at a fixed time, in the rigid lid scaling $(t'=\epsilon t)$ for $d=1,2$.
\item In Section \ref{c4:lackstrong}, we prove the strong convergence in $L^\infty([0;T[;L^2\times H^{1/2}(\R^d))$ of the solutions of the full Water-Waves equation in rigid lid scaling to the solutions of the linearized equations, for $d=1$. It proves that the solutions of the full Water-Waves equations do not converge strongly in $L^\infty([0;T[;L^2\times H^{1/2}(\R^d))$ to zero, for $d=1$.
\item In Section \ref{c4:equivalence}, we prove the equivalence between the free-surface Euler equations \eqref{c4:eulerystem} and the Water-Waves equations \eqref{c4:ww_equation1} for $d=1,2$. We show that the solutions of Euler equations converge weakly as $\epsilon$ goes to zero, to the solutions of the rigid lid equation \eqref{c4:rigidlid}. The convergence is therefore not strong, at least for $d=1$, according to the preceding points.
\end{itemize}

\subsection{Notations}\label{c4:notations}
We introduce here all the notations used in this paper.
 \subsubsection{Operators and quantities} Because of the use of dimensionless variables (see before the "dimensionless equations" paragraph), we use the following twisted partial operators: 
\begin{align*}
&\nabla^{\gamma} = {}^t(\partial_x,\gamma\partial_y) \quad &\text{ if } d=2\quad &\text{ and } &\nabla^{\gamma} = \partial_x &\quad \text{ if } d=1 \\
&\Delta^{\mu,\gamma} = \mu\partial_x^2+\gamma^2\mu\partial_y^2+\partial_z^2 \quad &\text{ if } d=2\quad &\text{ and } &\Delta^{\mu,\gamma} = \mu\partial_x^2+\partial_z^2 &\quad \text{ if } d=1 \\
&\nabla^{\mu,\gamma} = {}^t(\sqrt{\mu}\partial_x,\gamma\sqrt{\mu}\partial_y,\partial_z)\quad &\text{ if } d=2\quad &\text{ and } &{}^t(\sqrt{\mu}\partial_x,\partial_z) &\quad \text{ if } d=1 \\
&\nabla^{\mu,\gamma}\cdot = \sqrt{\mu}\partial_x+\gamma\sqrt{\mu}\partial_y+\partial_z\quad &\text{ if } d=2\quad &\text{ and } &\sqrt{\mu}\partial_x+\partial_z &\quad \text{ if } d=1 \\
&\mbox{\rm curl}^{\mu,\gamma} = {}^t(\sqrt{\mu}\gamma\partial_y-\partial_z,\partial_z-\sqrt{\mu}\partial_x,\partial_x-\gamma\partial_y)&\text {if } d=2.\quad
\end{align*}
\begin{remark}All the results proved in this paper do not need the assumption that the typical wave lengths are the same in both directions, ie $\gamma = 1$. However, if one is not interested in the dependence of $\gamma$, it is possible to take $\gamma = 1$ in all the following proofs. A typical situation where $\gamma\neq 1$ is for weakly transverse waves for which $\gamma=\sqrt{\mu}$; this leads to weakly transverse Boussinesq systems and the Kadomtsev–Petviashvili equation (see \cite{lannes_saut}).
\end{remark}
For all $\alpha = (\alpha_1,..,\alpha_d)\in\mathbb{N}^d$, we write $$\partial^\alpha = \partial^{\alpha_1}_{x_1}...\partial^{\alpha_d}_{x_d}$$ and $$\vert\alpha\vert = \alpha_1+...+\alpha_d.$$

 We use the classical Fourier multiplier 
$$\Lambda^s = (1-\Delta)^{s/2} \text{ on } \mathbb{R}^d$$ defined by its Fourier transform as $$\mathcal{F}(\Lambda^s u)(\xi) = (1+\vert\xi\vert^2)^{s/2}(\mathcal{F}u)(\xi)$$ for all $u\in\mathcal{S}'(\mathbb{R}^d)$.
The operator $\mathfrak{P}$ is defined as 
\begin{equation}
\mathfrak{P} = \frac{\vert D^{\gamma}\vert}{(1+\sqrt{\mu}\vert D^{\gamma}\vert)^{1/2}}\label{c4:defp}
\end{equation}
where $$\mathcal{F}(f(D)u)(\xi) = f(\xi)\mathcal{F}(u)(\xi)$$ is defined for any smooth function $f$ of polynomial growth and $u\in\mathcal{S}'(\mathbb{R}^d)$. The pseudo-differential operator $\mathfrak{P}$ acts as the square root of the Dirichlet-Neumann operator (see \eqref{c4:equiDN}).\\	
We denote as before by $G_{\mu,\gamma}$ the Dirichlet-Neumann operator, which is defined as followed in the scaled variables:

\begin{equation*}
G_{\mu,\gamma}\psi = G_{\mu,\gamma}[\epsilon\zeta]\psi = \sqrt{1+\epsilon^2 \modd{\nablag\zeta}} \partial_n \Phi_{\vert z=\epsilon\zeta} =  (\partial_z\Phi-\mu\nabla^{\gamma}(\epsilon\zeta)\cdot\nabla^{\gamma}\Phi)_{\vert z=\epsilon\zeta},
\end{equation*} 
where $\Phi$ solves the Laplace equation 
\begin{align*}
\begin{cases}
\Delta^{\gamma,\mu}\Phi = 0\\
\Phi_{\vert z=\epsilon\zeta} = \psi,\quad \partial_n \Phi_{\vert z=-1} = 0.
\end{cases}
\end{align*}

For the sake of simplicity, we use the notation $G[\epsilon\zeta]\psi$ or even $G\psi$ when no ambiguity is possible.

\subsubsection{The Dirichlet-Neumann problem}
In order to study the Dirichlet-Neumann problem \eqref{c4:dirichlet}, we need to map $\Omega_t$ into a fixed domain (and not on a moving subset). For this purpose, we introduce the following fixed strip:
$$\mathcal{S} = \mathbb{R}^d\times (-1;0)$$
and the diffeomorphism 
\begin{equation}
\Sigma_t^{\epsilon} :
\left.
  \begin{aligned}
    \mathcal{S} & \rightarrow &\Omega_t \\
    (X,z) & \mapsto & (1+\epsilon\zeta(X))z+\epsilon\zeta(X). \\
  \end{aligned}
\right.\label{c4:diffeoS}
\end{equation}
It is quite easy to check that $\Phi$ is the variational solution of \eqref{c4:dirichlet} if and only if $\phi = \Phi\circ\Sigma_t^{\epsilon}$ is the variational solution of the following problem:
\begin{align}\begin{cases}
        \nabla^{\mu,\gamma}\cdot P(\Sigma_t^{\epsilon})\nabla^{\mu,\gamma} \phi = 0  \label{c4:dirichletneumann}\\
        \phi_{z=0}=\psi,\quad \partial_n\phi_{z=-1} = 0,  \end{cases}
\end{align}
and where $$P(\Sigma_t^{\epsilon}) = \vert \det  J_{\Sigma_t^{\epsilon}}\vert J_{\Sigma_t^{\epsilon}}^{-1}~^t(J_{\Sigma_t^{\epsilon}}^{-1}),$$ where $J_{\Sigma_t^{\epsilon}}$ is the jacobian matrix of the diffeomorphism $\Sigma_t^{\epsilon}$. For a complete statement of the result, and a proof of existence and uniqueness of solutions to these problems, see \cite{david} Chapter 2. \par 
We introduce here the notations for the shape derivatives of the Dirichlet-Neumann operator. More precisely, we define the  open set  $\mathbf{\Gamma}\subset H^{t_0+1}(\mathbb{R}^d)$ as:
$$\mathbf{\Gamma} =\lbrace \Gamma=\zeta\in H^{t_0+1}(\mathbb{R}^d),\quad \exists h_0>0,\forall X\in\mathbb{R}^d, \epsilon\zeta(X) +1- \geq h_0\rbrace$$ and, given a $\psi\in \overset{.}H{}^{s+1/2}(\mathbb{R}^d)$, the mapping: \begin{displaymath}G[\epsilon\cdot] : \left. \begin{array}{rcl}
&\mathbf{\Gamma} &\longrightarrow H^{s-1/2}(\mathbb{R}^d) \\
&\Gamma=(\zeta,b) &\longmapsto G[\epsilon\zeta]\psi.
\end{array}\right.\end{displaymath} We can prove the differentiability of this mapping. See Appendix \ref{c4:appendixA} for more details. We denote $d^jG(h,k)\psi$ the $j$-th derivative of the mapping at $(\zeta,b)$ in the direction $(h,k)$. When we only differentiate in one direction, and no ambiguity is possible, we simply denote $d^jG(h)\psi$ or $d^j G(k)\psi$.

\subsubsection{Functional spaces}
The standard scalar product on $L^2(\mathbb{R}^d)$ is denoted by $(\quad,\quad)_2$ and the associated norm $\vert\cdot\vert_2$. We will denote the norm of the Sobolev spaces $H^s(\mathbb{R}^d)$ by $\vert \cdot\vert_{H^s}$.\par

We introduce the following functional Sobolev-type spaces, or Beppo-Levi spaces: 
\begin{definition}
We denote $\dot{H}^{s+1}(\mathbb{R}^d)$ the topological vector space 
$$\dot{H}^{s+1}(\mathbb{R}^d) = \lbrace u\in L^2_{loc}(\mathbb{R}^d),\quad \nabla u\in H^s(\mathbb{R}^d)\rbrace$$
endowed with the (semi) norm $\vert u\vert_{\dot{H}^{s+1}(\mathbb{R}^d)} = \vert\nabla u\vert_{H^s(\mathbb{R}^d)} $.
\end{definition}
Just remark that $\dot{H}^{s+1}(\mathbb{R}^d)/\mathbb{R}^d$ is a Banach space (see for instance \cite{lions}).\par

The space variables $z\in\mathbb{R}$ and $X\in\mathbb{R}^d$ play different roles in the equations since the Euler formulation (\ref{c4:euler}) is posed for $(X,z)\in \Omega_t$. Therefore, $X$ lives in the whole space $\mathbb{R}^d$ (which allows to take fractional Sobolev type norms in space), while $z$ is actually bounded.  For this reason, we need to introduce the following Banach spaces: 
\begin{definition}
The Banach space $(H^{s,k}((-1,0) \times\mathbb{R}^d),\vert .\vert_{H^{s,k}})$ is defined by 
$$H^{s,k}((-1,0) \times\mathbb{R}^d) = \bigcap_{j=0}^{k} H^j((-1,0);H^{s-j}(\mathbb{R}^d)),\quad \vert u\vert_{H^{s,k}} = \sum_{j=0}^k \vert \Lambda^{s-j}\partial_z^j u\vert_2.$$
\end{definition}

\section{The rigid lid limit for the Water-Waves equations}\label{c4:rigidlidlimitww}
We prove in this section that the solutions of the linearized Water-Waves equations in flat bottom converge weakly but non strongly in $L^2$ as $\epsilon$ goes to zero, in the rigid lid regime. We then prove the strong convergence in $L_t^\infty L_x^2$ of the solutions of the full Water-Waves equations to the solutions of the linearized equations, in the same regime, in dimension $1$. 
\begin{remark} The hypothesis "$d=1$" can be removed, if one can prove a dispersive estimate for the linear operator of the Water-Waves equation of the form of Theorem \ref{c4:dispersive_estimate}, in dimension $d=2$. The  flat bottom hypothesis seems however less easy to remove, due to technical reasons, such as the complexity of the asymptotic expansion of $\G$ with respect to the surface when the bottom is non flat.
\end{remark}

\subsection{Solutions of the rigid lid equations}\label{c4:solution_rigid} We recall the formulation of the rigid lid equations with a flat bottom (see also \cite{camassa_holm} for reference):
\begin{align}\begin{cases}
\displaystyle{\partial_t V +  (V\cdot\nabla^{\gamma} + \frac{1}{\mu}w\partial_z)V =  -\nabla^{\gamma} P} \text{ in } \Omega\\
\displaystyle{\partial_t w +  (V\cdot\nabla^{\gamma} + \frac{1}{\mu}w\partial_z)w =  -(\partial_z P) }\text{ in } \Omega \\
\underline{w} = 0 \\
\nabla^{\mu,\gamma}\cdot U = 0 \text{ in } \Omega \\
\mbox{\rm curl}^{\mu,\gamma}(U) = 0 \text{ in } \Omega \\
U\cdot e_z = 0 \text{ for }  z=-1 \\
P=\zeta \text{ for } z=0
\end{cases}\label{c4:rigidlid1}\end{align}
where $\Omega$ is the fixed domain 
$$\Omega = \lbrace (X,z)\in\R^{d+1},-1\leq z\leq 0\rbrace.$$
We prove here that in fact, there is only one trivial solution to the system \eqref{c4:rigidlid1}:

\begin{lemma}\label{c4:div_curl}
Let us consider the following div-curl problem:
\begin{align}\label{c4:div_curl_eq}
\begin{cases}
\nabla^{\mu,\gamma}\cdot U = 0 \text{ in } \Omega \\
\mbox{\rm curl}^{\mu,\gamma}(U) = 0 \text{ in } \Omega\\
U\cdot e_z = 0 \text{ for }  z=-1 \\
\underline{w} = 0
\end{cases}
\end{align}
with the above notations. Then, the unique solution in $\dot{H}^1(\Omega)$ to the system \eqref{c4:div_curl_eq} is $U=0$.
\end{lemma}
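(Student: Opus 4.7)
The plan is to reduce the system to a Neumann problem for a twisted Laplacian on the flat strip and then solve it explicitly by horizontal Fourier transform. Since the slab $\Omega=\R^d\times(-1,0)$ is simply connected, the assumption $\mathrm{curl}^{\mu,\gamma}(U)=0$ yields a scalar potential $\Phi$ (defined up to an additive constant), inheriting the regularity of $U$, with $U=\nabla^{\mu,\gamma}\Phi$. Substituting in the divergence equation gives $\Delta^{\mu,\gamma}\Phi=0$ in $\Omega$, and the two remaining boundary conditions translate, via $w=\partial_z\Phi$, into the pair of homogeneous Neumann conditions $\partial_z\Phi_{|z=-1}=\partial_z\Phi_{|z=0}=0$.

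I would then conclude by taking the horizontal Fourier transform in $X$: for each $\xi\in\R^d$, the profile $z\mapsto\hat\Phi(\xi,z)$ solves the ODE $(\partial_z^2-\mu|\xi^\gamma|^2)\hat\Phi=0$ on $(-1,0)$ with homogeneous Neumann data at both endpoints. For $\xi^\gamma\neq 0$, the general solution is $A(\xi)\cosh(\sqrt{\mu}|\xi^\gamma|(z+1))+B(\xi)\sinh(\sqrt{\mu}|\xi^\gamma|(z+1))$; the Neumann condition at $z=-1$ forces $B=0$, and the one at $z=0$ yields $A\sinh(\sqrt{\mu}|\xi^\gamma|)=0$, so $A=0$. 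Hence $\hat U(\xi,\cdot)=0$ for every non-zero frequency, i.e., $U$ is independent of the horizontal variable.

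It remains to treat horizontally constant fields $U=(V(z),w(z))$. The divergence equation reduces to $w'(z)=0$ and either of the boundary conditions on $w$ forces $w\equiv 0$; the curl-free condition then collapses to $V'(z)=\sqrt{\mu}\nabla^\gamma w=0$, so $V$ is a constant vector. Since the $\dot H^1(\Omega)$ seminorm is $|\nabla U|_{L^2}$, such a constant field coincides with the zero element of $\dot H^1(\Omega)$, which is what the lemma means by $U=0$. The only mildly technical points are the construction of $\Phi$ at the low regularity of the $\dot H^1$ setting (justified by the simple connectedness of $\Omega$) and the distributional handling of the $\xi=0$ mode, which is exactly the horizontally constant analysis just performed.
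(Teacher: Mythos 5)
Your reduction is exactly the paper's: curl-free on the simply connected slab gives $U=\nabla^{\mu,\gamma}\Phi$, the divergence condition gives $\Delta^{\mu,\gamma}\Phi=0$, and the two conditions on $w$ become homogeneous Neumann data at $z=0$ and $z=-1$. Where you diverge is the conclusion: the paper simply invokes the known uniqueness result for this double-Neumann Laplace problem in $\dot{H}^1(\Omega)$ (citing \cite{david}, Chapter 2 and Appendix A), while you resolve it explicitly by horizontal Fourier transform, killing every nonzero frequency via the $\cosh/\sinh$ ODE analysis and treating the zero mode by hand. Your route is self-contained and elementary, which is a real gain; the trade-off is that it is tied to the flat strip, whereas the cited result (as the paper notes in a footnote) covers non-flat bottoms as well. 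Two small points to tighten. First, $\mathrm{supp}\,\hat\Phi\subset\{\xi=0\}$ gives a priori that $\Phi$ is polynomial in $X$ (with $z$-dependent coefficients), not merely constant; you need the constraint $\nabla U\in L^2(\mathbb{R}^d\times(-1,0))$ to rule out nonconstant polynomial behaviour before reducing to $U=U(z)$. Second, your final step disposes of the residual constant horizontal field by reading $\dot{H}^1(\Omega)$ as a quotient modulo constants; the paper instead works with $\Phi\in\dot{H}^1(\Omega)$, i.e. $U=\nabla^{\mu,\gamma}\Phi\in L^2(\Omega)$, which excludes nonzero constants outright. Either reading is defensible given the statement, but you should say explicitly which convention you are using, since a nonzero constant vector field does solve the div-curl system and does have vanishing $\dot H^1$ seminorm.
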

\begin{proof}
The vector field $U=0$ is of course a solution to the system \eqref{c4:div_curl_eq}. Let us now consider a solution $U$ to this problem. The curl free condition over $U$ states that there exists a potential $\Phi$ such that $U=\nabla^{\mu,\gamma} \Phi$ in $\Omega$. The divergence free condition provides $\Delta^{\mu,\gamma} \Phi = 0$ in $\Omega$. With the boundary conditions (recall that $\underline{w}$ is the vertical component of the velocity at the surface), the potential $\Phi$ satisfies the following Laplace equation:
\begin{align*}
\begin{cases}
\Delta^{\mu,\gamma}\Phi = 0 \qquad \text{ in } \Omega \\
\nabla^{\mu,\gamma}\Phi_{\vert z=0}\cdot n = 0 \\
 \nabla^{\mu,\gamma} \Phi_{\vert z=-1}\cdot n = 0,
\end{cases}
\end{align*}
where $n$ denotes the upward normal vector in the vertical direction. It is well known (see for instance \cite{david} Chapter 2 and Appendix A) that the unique solution to this system in $\dot{H}^1(\Omega)$ satisfies $\nabla^{\mu,\gamma} \Phi = 0$.\footnote{In fact the result stands even if the bottom is non flat.}
%
%\begin{align*}
%\sum_{i,j} \int_\Omega \vert \partial_j A_i\vert^2 &= \sum_{i,j} \int_\Omega \partial_j U_i (\partial_j U_i-\partial_i U_j)+\int_\Omega \partial_j U_i\partial_i U_j \\
%&= \vert \mbox{\rm curl} U\vert_2^2 +\sum_{i,j}\int_\Omega \partial_jU_i\partial_i A_j
%\end{align*}
%and one can use the Stokes formula, to write:
%\begin{align*}
%\int_\Omega \partial_jU_i\partial_i A_j &= \int_{\partial\Omega} n_j U_i\partial_iU_j-\int_\Omega U_i\partial_i\partial_j U_j \\
%&= \int_{\partial\Omega} (n_i U_j\partial_j U_i-n_jU_j\partial_i U_i)+\int_\Omega \partial_iU_i\partial_j U_j
%\end{align*}
%where $n$ is the outward unit normal vector to the boundary of $\Omega$. We obtained the following result:
%
%$$ \vert \nabla_{X,z} U\vert_2^2 = \vert \mbox{\rm curl} U\vert_2^2+\vert \nabla\cdot U\vert_2^2 + \sum_{i,j} \int_{\partial\Omega} (n_i U_j\partial_j U_i-(n\cdot U) \nabla\cdot U).$$
%
%Now, using the system \eqref{c4:div_curl_eq}  satisfied by $U$, one gets $\mbox{\rm curl}U = 0$, $\nabla\cdot U =0$. Then, one has $n=\pm e_z$ and the boundary conditions can be expressed as $U\cdot n = 0$ at the top and the bottom\footnote{Actually the fact that the bottom is flat here is quite essential and implies that $n=-e_z$ at the bottom. It is not true in general that the integral $\int_{\partial\Omega} (n_i U_j\partial_j U_i)$ is null with a non flat bottom.}. Therefore, the integral over $\partial\Omega$ is null.
%
%$$\vert \nabla_{X,z} U\vert_2^2 = 0.$$
$\Box$

\end{proof}
Let us explain this result on a physical point of view. The rigid-lid equations are obtained after scaling the time $$t'=\epsilon t$$ in the free surface Euler equations, and passing to the limit as $\epsilon$ goes to zero. Therefore, in the original time variable, $t=\frac{t'}{\epsilon}$, the rigid lid limit consists in letting the amplitude of the waves goes to zero AND moving forward in time at a rate $\frac{1}{\epsilon}$. At the limit, after an infinite time, all the interesting components of the waves moved to $\pm\infty$ and there only remains a static fluid ($U=0$) with a flat surface. It is possible that some vortices remain, but they are not seen by the model ($\rm curl(U)=0$). One could generalize the study lead in this paper to the Water-Waves equation with vorticity (see \cite{castro_lannes1}).

\subsection{The linearized equation around $\zeta=0$ in the rigid lid regime}\label{c4:nonstrongconvergence}
We prove here that the solutions to the Water-Waves equations do not converge strongly in $L^2$ as $\epsilon$ goes to zero. To this purpose, we study the lack of compactness induced by the linear operator of the Water-Waves equations. We recall that in the rigid lid regime, we perform a time scaling $$t'=\epsilon t.$$ The Water-Waves equations can be written in dimensionless form, and in the rigid lid scaling:
\begin{align}\begin{cases}
\ds \epsilon\partial_t\zeta-\frac{1}{\mu}G[\epsilon\zeta]\psi = 0\\
\ds \epsilon\partial_t\psi+\zeta+\frac{\epsilon}{2}\vert\nablag\psi\vert^2-\frac{\epsilon}{\mu}\frac{(G[\epsilon\zeta]\psi+\epsilon\mu\nablag\zeta\cdot\nablag\psi)^2}{2(1+\epsilon^2\mu\vert\nablag\zeta\vert^2}=0.\label{c4:water_waves}
\end{cases}
\end{align}
Since we are interested in the convergence as $\epsilon$ goes to zero, we write this system under the form:
\begin{equation}\left\{\begin{aligned}
\epsilon\partial_t\zeta-\frac{1}{\mu}\Go\psi = \epsilon f\\
\epsilon\partial_t\psi+\zeta = \epsilon g.\\
\end{aligned}\right.\label{c4:splitt1}\end{equation}
We will treat the non-linear terms $f,g$ present in \eqref{c4:splitt1} as a perturbation of the linearized equations. To this purpose,  we start to study the solutions of the linearized system:
\begin{align}\begin{cases}
\epsilon\partial_t\zeta-\frac{1}{\mu}\Go\psi=0 \\
\epsilon\partial_t\psi+\zeta = 0, \label{c4:zetapsi_eq}
\end{cases}\end{align}
where $\Go = G[0]$ is the Dirichlet-Neumann operator in flat bottom and flat surface, and is given by (see for instance \cite{david} Chapter 1):
\begin{equation}\Go\psi = \sqrt{\mu}\D\tanh(\sqrt{\mu}\D)\psi\label{c4:defgo}\end{equation} for all $\psi\in\mathcal{S}(\R^d)$.
It is easy to check that the solution $\zeta$ of \eqref{c4:zetapsi_eq} satisfies the following equation:
\begin{equation}
\epsilon^2\partial_t\zeta+\frac{1}{\mu}\Go\zeta = 0.\label{c4:zeta_eq}\end{equation}
The equation \eqref{c4:zeta_eq} looks like a wave equation, except that $\Go$ is not exactly an order two operator (since it acts like an order one operator for high frequencies). In the case of the wave equation on $\R$ \begin{align*}\begin{cases}\epsilon^2\partial_t u -\partial_x^2 u = 0 \\
u(x,0) = u_0 \\
(\epsilon\partial_t u)(x,0) = u_1,
\end{cases}\end{align*} with $u_0$, $u_1$ smooth, the solutions have components of the form $$u_0(x-\frac{1}{\epsilon}t)+u_0(x+\frac{1}{\epsilon}t)$$ (plus other terms we do not detail). Each of these two components converges pointwise to $0$ as $\epsilon$ goes to zero (since $u_0(t,x) \underset{\vert x\vert \rightarrow +\infty}{\longrightarrow} 0$) and does not converge strongly in $L^2(\R)$, since for instance its $L^2(\R)$ norm  does not depend on $\epsilon$. The same behavior, combined with some dispersive effects stands for \eqref{c4:zeta_eq}. We start to give an existence result for the linearized equations \eqref{c4:zetapsi_eq}.  Note that it is not difficult to prove formally that if $(\zeta,\psi)$ is the solution of \eqref{c4:zetapsi_eq}, then the following quantity, called "Hamiltonian" is conserved through time:
$$\frac{1}{2\mu}(\Go\psi,\psi)_2+\frac{1}{2}\vert\zeta\vert_2^2.$$ Therefore, we introduce the following operator: $$\B = \frac{\D}{(1+\sqrt{\mu}\D)^{1/2}}.$$ Note that $\B$ has the same behavior as the square root of $\Go$ defined by $\eqref{c4:defgo}$.

\begin{proposition}\label{c4:existence_linear}	
Let $s\geq 1$. Let also $\zeta_0\in H^s(\R^d)$ and $\psi_0$ be such that $\B\psi_0\in H^s(\R^d)$. Then, there exists a unique solution $(\zeta,\psi)$ to the equation:
\begin{align}\begin{cases}
\epsilon\partial_t\zeta-\frac{1}{\mu}\Go\psi=0 \\
\epsilon\partial_t\psi+\zeta = 0 \\
(\zeta(0,X),\psi(0,X)) = (\zeta^0(X),\psi^0(X))  \label{c4:zeta_psi_eq}
\end{cases}\end{align}
such that $(\zeta,\B\psi)\in C(\R,H^s(\R^d))\cap C^1(\R,H^{s-1}(\R^d))$. Moreover, one has: $$\forall t\in\R,\qquad \begin{pmatrix}\zeta \\ \psi\end{pmatrix}(t) = e^{-\frac{t}{\epsilon}L} \begin{pmatrix}\zeta_0 \\\psi_0\end{pmatrix}$$ where \begin{equation}e^{tL} = \frac{1}{2}\begin{pmatrix}
1 &-i\omega(D) \\
-\frac{1}{i\omega(D)} &1
\end{pmatrix}e^{-i\omega(D)t}+\frac{1}{2}\begin{pmatrix}
1 &i\omega(D) \\
\frac{1}{i\omega(D)} &1 \end{pmatrix}e^{i\omega(D)t}\label{c4:def_L}\end{equation} and \begin{equation}\omega(D) = \sqrt{\frac{\D\tanh(\sqrt{\mu}\D)}{\sqrt{\mu}}}.\label{c4:def_omega}\end{equation}
\end{proposition}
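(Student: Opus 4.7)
The plan is to Fourier-transform \eqref{c4:zeta_psi_eq} in the horizontal variable $X$ and solve the resulting $2\times 2$ linear ODE explicitly at each frequency. By \eqref{c4:defgo} and the definition \eqref{c4:def_omega} of $\omega$, $\Go$ acts as the Fourier multiplier with symbol $\mu\,\omega(\xi)^2$, so at each fixed $\xi$ the system becomes
$$\epsilon\partial_t\hat\zeta=\omega(\xi)^2\hat\psi,\qquad \epsilon\partial_t\hat\psi=-\hat\zeta.$$
Eliminating $\hat\psi$ gives $\partial_t^2\hat\zeta+(\omega/\epsilon)^2\hat\zeta=0$, so $\hat\zeta$ is a linear combination of $e^{\pm i\omega(\xi)t/\epsilon}$, and imposing the initial data fixes
$$\hat\zeta(t,\xi)=\tfrac12(\hat\zeta_0+i\omega\hat\psi_0)e^{-i\omega t/\epsilon}+\tfrac12(\hat\zeta_0-i\omega\hat\psi_0)e^{i\omega t/\epsilon}.$$
The analogous computation for $\hat\psi$ reassembles into precisely the matrix \eqref{c4:def_L} applied to $(\zeta_0,\psi_0)^T$ with $t$ replaced by $-t/\epsilon$. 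Uniqueness will then be immediate from this representation (or, alternatively, from the conservation of the Hamiltonian $\tfrac{1}{2\mu}(\Go\psi,\psi)_2+\tfrac12|\zeta|_2^2$ combined with the equivalence \eqref{c4:equiDN}).

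The main task is to show that this formal solution lies in $C(\R,H^s)\cap C^1(\R,H^{s-1})$ for $(\zeta,\B\psi)$. The key ingredient will be the symbol equivalence $\omega(\xi)\sim b(\xi)$, where $b(\xi)=|\xi^\gamma|/(1+\sqrt\mu|\xi^\gamma|)^{1/2}$ is the symbol of $\B$: the identity
$$\frac{\omega(\xi)^2}{b(\xi)^2}=\frac{\tanh(\sqrt\mu|\xi^\gamma|)\bigl(1+\sqrt\mu|\xi^\gamma|\bigr)}{\sqrt\mu|\xi^\gamma|}$$
defines a continuous positive function on $[0,+\infty)$ whose limits at both endpoints equal $1$, hence is bounded above and away from zero. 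Consequently $\omega(D)\psi_0$ has the same $H^s$ regularity as $\B\psi_0$, and the combination $(b/\omega)\hat\zeta_0$ that appears in the formula for $b\hat\psi$ has the same $H^s$ regularity as $\hat\zeta_0$. Since $e^{\pm i\omega(D)t/\epsilon}$ is unitary on every $H^s$, the explicit formulas yield $\zeta,\B\psi\in L^\infty(\R,H^s)$ uniformly in $\epsilon$, with continuity in $t$ following by dominated convergence in the Fourier integral. Differentiating in $t$ costs one factor $\omega(\xi)/\epsilon\lesssim\langle\xi\rangle/\epsilon$, which yields the $C^1(\R,H^{s-1})$ bound.

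The only substantive (though elementary) step is this two-sided bound $\omega\sim b$, which guarantees that the hypothesis $\B\psi_0\in H^s$ is exactly the data space matched to the linear dispersion: the $b$-weight in the definition of $\B$ is equivalent to the frequency weight $\omega$ carried by the linear semigroup. Everything else is a direct Fourier-side computation for a $2\times 2$ constant-coefficient ODE.
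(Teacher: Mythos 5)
Your proposal is correct and follows essentially the same route as the paper: Fourier-transform the system, reduce to the second-order ODE $\epsilon^2\partial_t^2\hat\zeta+\omega(\xi)^2\hat\zeta=0$, write the explicit oscillatory solution, and recover the stated regularity from the two-sided bound on $\tanh(\sqrt\mu\xig)(1+\sqrt\mu\xig)/(\sqrt\mu\xig)$, i.e.\ the equivalence of the symbols of $\omega(D)$ and $\B$. Nothing further is needed.
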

\begin{remark} The notation $e^{tL}$ refers to the linear operator of the Water-Waves, defined by $$L= \begin{pmatrix}
0 &-\frac{1}{\mu}\Go \\
1 &0
\end{pmatrix}.$$
\end{remark}
\begin{proof}

Let us fix $s\geq 1$, and $(\zeta_0,\psi_0) \in H^s(\R^d)^2$. If $(\zeta,\psi)$ is a solution to \eqref{c4:zeta_psi_eq}, then $\zeta$ satisfies the following equation:
\begin{equation}\left\{\begin{aligned}
\epsilon^2\partial_t^2\zeta+\frac{1}{\mu}\Go\zeta = 0 \\
\zeta(x,0) = \zeta_0\\
(\epsilon\partial_t \zeta)(x,0) = \zeta_1,
\end{aligned}\right.\label{c4:equilol}\end{equation}
in the space $C(\R,H^s(\R^d))\cap C^1(\R,H^{s-1}(\R^d))$, with $\zeta_1 = \frac{1}{\mu}\Go\psi_0$. As usual, we take the Fourier transform in space of the equation \eqref{c4:equilol} and we denote $\widehat{\zeta}(t,\xi)$ the Fourier transform of $\zeta$ with respect to the variable $X$ for a fixed $t$. The distribution $\widehat{\partial_t \zeta} $of $\mathcal{S}'(\R\times\R^d)$ is equal to the distribution $\partial_t\widehat{\zeta}$ using the Banach-Steinhaus theorem. We therefore get the following equation in the distributional sense of $\mathcal{S}'(\R\times\R^d)$:
\begin{align*}
\begin{cases}
\epsilon^2\partial_t^2 \widehat{\zeta}+\frac{1}{\sqrt{\mu}}\xig\tanh(\sqrt{\mu}\xig)\widehat{\zeta}=0\\
\widehat{\zeta}(\xi,0) = \widehat{\zeta_0}(\xi) \\
(\epsilon\partial_t \widehat{\zeta})(\xi,0) = \widehat{\zeta_1}(\xi).
\end{cases}
\end{align*}
It is then easy to check that the unique solution to this equation is given by

\begin{equation}\widehat{\zeta}(\xi,t) = (\frac{\widehat{\zeta_0}(\xi)-i\omega(\xi) \widehat{\psi_0}(\xi)}{2})\exp^{i\omega(\xi)\frac{t}{\epsilon}}+(\frac{\widehat{\zeta_0}(\xi)+i\omega(\xi) \widehat{\psi_0}(\xi)}{2})\exp^{-i\omega(\xi)\frac{t}{\epsilon}}\label{c4:sol_zeta}\end{equation}
with $$\omega(\xi) = \sqrt{\frac{\xig\tanh(\sqrt{\mu}\xig)}{\sqrt{\mu}}}
$$ such that $\frac{1}{\mu}\Go = \omega(D)^2$. The desired regularity is easy to get by noticing that \begin{align*}\omega(\xi)\widehat{\psi_0}(\xi)& = \frac{\xig}{(1+\sqrt{\mu}\xig)^{1/2}}\widehat{\psi_0}\sqrt{\frac{\tanh(\sqrt{\mu}\xig)(1+\sqrt{\mu}\xig)}{\sqrt{\mu}\xig}} \\&=\widehat{\B\psi_0}\sqrt{\frac{\tanh(\sqrt{\mu}\xig)(1+\sqrt{\mu}\xig)}{\sqrt{\mu}\xig}}. \end{align*} with $\B\psi_0\in H^s(\R^d)$ and that there exists $C_1$, $C_2>0$ independent of $\mu,\epsilon$ such that:
\begin{equation*}\forall \xi\in\R^d,\qquad C_1\leq \sqrt{\frac{\tanh(\sqrt{\mu}\xig)(1+\sqrt{\mu}\xig)}{\sqrt{\mu}\xig}} \leq C_2.\end{equation*} The same method applies for $\psi$ and one gets that if $(\zeta,\psi)$ satisfies \eqref{c4:zeta_psi_eq}, then necessarily: 
\begin{equation}\widehat{\psi}(t,\xi) = \widehat{\psi_0}\cos(\omega(\xi)\frac{t}{\epsilon})-\frac{\sin(\omega(\xi)\frac{t}{\epsilon})}{\omega(\xi)}\widehat{\zeta_0}.\label{c4:sol_psi}\end{equation}
It is easy to check that $(\zeta,\psi)$ given by \eqref{c4:sol_zeta}, \eqref{c4:sol_psi} is a solution to the system \eqref{c4:zetapsi_eq}. 
  $\Box$
\end{proof}

The oscillating component of the solution of the linearized Water-Waves equations \eqref{c4:zeta_psi_eq} appears in the explicit formula \eqref{c4:sol_zeta}, but we need to prove an oscillating phase method type result in order to conclude to the weak convergence to zero and strong non-convergence. The phase $\omega$ is actually not differentiable at
 $\xi=0$, which prevent the direct use of standards results on stationary phase methods.

\begin{proposition}\label{c4:oscillatory}Let $t>0$ and $u\in C_0^1(\R^d)$. Then, one has $$\int_{\R^d} \exp^{i\frac{t}{\epsilon}\omega(\xi)} u(\xi)d\xi\underset{\epsilon\rightarrow 0}{\longrightarrow} 0.$$ 
\end{proposition}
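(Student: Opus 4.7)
The strategy is a non-stationary phase argument, where care must be taken near $\xi=0$ since $\omega$ fails to be smooth there: from \eqref{c4:def_omega}, $\omega(\xi)^2 \sim |\xi^\gamma|^2$ as $\xi\to 0$, so $\omega$ behaves only like $|\xi^\gamma|$ at the origin. The plan is to split the integrand using a smooth cutoff $\chi_\eta \in C^\infty_c(\R^d)$ equal to $1$ on $\{|\xi|\leq\eta\}$ and supported in $\{|\xi|\leq 2\eta\}$:
\begin{equation*}
I_\epsilon := \int_{\R^d} e^{i\omega(\xi)t/\epsilon}\,u(\xi)\,d\xi = \int_{\R^d} e^{i\omega t/\epsilon}\,u\chi_\eta\,d\xi + \int_{\R^d} e^{i\omega t/\epsilon}\,u(1-\chi_\eta)\,d\xi.
\end{equation*}
For any fixed $\delta>0$, the first integral is trivially controlled by $|u|_\infty\,|B(0,2\eta)| \leq C_d |u|_\infty\eta^d$, which becomes $\leq\delta/2$ provided $\eta$ is taken small enough, and this smallness can be chosen uniformly in $\epsilon$.

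For the second integral, the plan is to integrate by parts using the non-vanishing of $\nabla\omega$. One checks from \eqref{c4:def_omega} that $\omega\in C^\infty(\R^d\setminus\{0\})$ and $|\nabla\omega(\xi)|>0$ for $\xi\neq 0$: indeed $r\mapsto r\tanh(\sqrt{\mu}r)/\sqrt{\mu}$ has positive derivative on $(0,\infty)$, so $\omega$ is strictly increasing along rays. Picking $R$ such that $\supp u\subset B(0,R)$, $|\nabla\omega|$ is then bounded below by a positive constant on the compact annulus $\{\eta\leq|\xi|\leq R\}$, which contains $\supp(u(1-\chi_\eta))$. Using the identity
\begin{equation*}
e^{i\omega t/\epsilon} = \frac{\epsilon}{it}\,\frac{\nabla\omega}{|\nabla\omega|^2}\cdot\nabla e^{i\omega t/\epsilon}
\end{equation*}
and integrating by parts (no boundary term, since $u(1-\chi_\eta)$ has compact support and vanishes near $0$) yields
\begin{equation*}
\int_{\R^d} u(1-\chi_\eta)\, e^{i\omega t/\epsilon}\,d\xi = -\frac{\epsilon}{it}\int_{\R^d}\nabla\cdot\!\Bigl(u(1-\chi_\eta)\,\frac{\nabla\omega}{|\nabla\omega|^2}\Bigr)\,e^{i\omega t/\epsilon}\,d\xi.
\end{equation*}
Since $u\in C^1_0(\R^d)$ and $\nabla\omega/|\nabla\omega|^2\in C^\infty$ on $\supp(u(1-\chi_\eta))$, the divergence is bounded with compact support and its $L^1$ norm is some finite constant $C(\eta)$, so the right-hand side is bounded by $C(\eta)\epsilon/t$.

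The main obstacle is precisely this non-smoothness of $\omega$ at the origin, which prevents a single global integration by parts or a direct application of a standard stationary/non-stationary phase lemma on all of $\R^d$. The two-step gymnastics above, first fixing $\eta$ small to make the origin contribution $\leq\delta/2$, then sending $\epsilon\to 0$ to make the smooth piece $\leq\delta/2$ (accepting that $C(\eta)$ may blow up as $\eta\to 0$, which is harmless because $\eta$ is already fixed at that point), is exactly tailored to this obstruction. Combining the two estimates yields $|I_\epsilon|\leq\delta$ for all $\epsilon$ sufficiently small, which is the claimed convergence to zero.
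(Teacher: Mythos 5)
Your proof is correct, but it takes a different route from the paper's. The paper splits into the cases $d=1$ and $d>1$: for $d=1$ it integrates by parts all the way up to $\xi=0$, using the fact that $\omega$ admits one-sided derivatives there (the boundary term $\frac{\epsilon}{ti}\frac{u(0)}{\omega'^{\pm}(0)}e^{i\frac{t}{\epsilon}\omega(0)}$ is itself $O(\epsilon)$); for $d>1$ it integrates by parts on $\lbrace\vert\xi\vert\geq\delta\rbrace$ and then sends $\delta\to 0$, checking that $\nabla\omega/\vert\nabla\omega\vert^2$ stays bounded near the origin and that its divergence is $O(1/\vert\xi\vert)$, hence integrable when $d>1$. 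That analysis buys an explicit decay rate $O(\epsilon/t)$ in every dimension. Your cutoff argument avoids entirely the delicate behaviour of $\omega$ at the origin and is uniform in $d$, but it only yields qualitative convergence (the bound $C_d\vert u\vert_\infty\eta^d + C(\eta)\epsilon/t$ with $C(\eta)$ possibly unbounded as $\eta\to 0$ gives no rate without further optimisation in $\eta$). Since the proposition, and its use in Theorem \ref{c4:theorem_nonstrongcv}, only require convergence to zero, your weaker conclusion suffices; one only loses the quantitative information that the paper's version provides for free.
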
\

\begin{proof} One can compute:

\begin{equation}
\forall \xi\neq 0,\quad (\nabla \omega)(\xi) = \frac{\tanh(\xig)+\xig(1-\tanh^2(\xig))}{2\sqrt{\xig\tanh(\xig)}}\frac{\xi}{\xig}.\label{c4:derivee_omega}\end{equation}
It is easy to see that $\nabla\omega$ does not vanish for $\xi\neq 0$. It is also easy to show that $\omega$ is not differentiable in $\xi=0$. But the derivative $\nabla\omega$ stay bounded as $\xi$ goes to zero.  We treat separately the cases $d=1$ and $d>1$. \par

\textit{- Case $d=1$}
One writes
\begin{equation}
\int_\R \exp^{i\frac{t}{\epsilon}\omega(\xi)} u(\xi)d\xi = \int_{-\infty}^0 \exp^{i\frac{t}{\epsilon}\omega(\xi)} u(\xi)d\xi + \int_0^{+\infty} \exp^{i\frac{t}{\epsilon}\omega(\xi)} u(\xi)d\xi.\label{c4:split_integral}  \end{equation}
Since $\omega$ is right and left differentiable in $\xi=0$, according to the expression \eqref{c4:derivee_omega}, one can  use standard methods of stationary phase. Indeed, one can write:

\begin{align*} \int_{-\infty}^0 \exp^{i\frac{t}{\epsilon}\omega(\xi)} u(\xi)d\xi &= \frac{\epsilon}{ti}\int_{-\infty}^0 \frac{1}{\omega'(\xi)}\frac{d}{d\xi}(\exp^{i\frac{t}{\epsilon}\omega(\xi)})u(\xi)d\xi \\
&= \frac{\epsilon}{ti}\frac{u(0)}{\omega'^-(0)}e^{i\frac{t}{\epsilon}\omega(0)}-\frac{\epsilon}{ti} \int_{-\infty}^0 e^{i\frac{t}{\epsilon}\omega(\xi)}\frac{d}{d\xi}(\frac{u(\xi)}{\omega'^{-}(\xi)})d\xi
\end{align*}
where $\omega'^{+}(0)$ and $\omega'^{-}(0)$ are respectively the right and left derivatives of $\omega$ at $\xi=0$. We then get the desired result. The same goes for the second integral of \eqref{c4:split_integral}.\par

\textit{- Case $d>1$}
We use the standard integrating by part method for the truncated integral:
\begin{align}
\int_{\vert\xi\vert\geq\delta}e^{i\frac{t}{\epsilon}\omega(\xi)} u(\xi)d\xi &=\int_{\vert\xi\vert\geq\delta} \frac{\epsilon}{it}\frac{1}{\vert \omega'(\xi)\vert^2} \sum_{j=1}^d \frac{\partial}{\partial\xi_j}\omega (\xi)\frac{\partial}{\partial\xi_j}( e^{i\frac{t} {\epsilon}\omega(\xi)})u(\xi)d\xi \nonumber\\
&= \frac{\epsilon}{it} \int_{\vert\xi\vert = \delta}  \frac{1}{\vert \omega'(\xi)\vert^2} \sum_{j=1}^d \frac{\partial}{\partial\xi_j}\omega (\xi) e^{i\frac{t} {\epsilon}\omega(\xi)}u(\xi)n_j d\sigma(\xi)\nonumber\\
&-\frac{\epsilon}{it}\int_{\vert\xi\vert\geq\delta} \sum_{j=1}^d\frac{\partial}{\partial\xi_j}(\frac{1}{\vert \omega'(\xi)\vert^2}  \frac{\partial}{\partial\xi_j}\omega (\xi)u(\xi)) e^{i\frac{t} {\epsilon}\omega(\xi)}d\xi\label{c4:phase_stat}
\end{align}
where $n_j$ stands for the $j$-th component of the normal external vector at the surface $\vert\xi\vert = \delta$. Now, one can check that:

$$\frac{1}{\vert\omega'(\xi)\vert^2}\frac{\partial}{\partial\xi_j} \omega(\xi) = \frac{2\xi_j\sqrt{\xig\tanh(\xig)}}{\xig\tanh(\xig)+(1-\tanh^2(\xig))\xig^2}$$
and thus this function is bounded as $\xi$ goes to zero. Moreover, one can check by computation that $$\vert \frac{\partial}{\partial\xi_j} (\frac{1}{\vert\omega'(\xi)\vert^2}\frac{\partial}{\partial\xi_j} \omega(\xi))\vert \leq \frac{C}{\vert\xi\vert}$$ as $\xi$ goes to zero, and consequently this function is integrable at $\xi=0$ (remember that $d>1$ here). Therefore, one can pass the limit as $\delta$ goes to zero in the formula \eqref{c4:phase_stat} and get 

$$\int_{\R^d}e^{i\frac{t}{\epsilon}\omega(\xi)} u(\xi)d\xi =  -\frac{\epsilon}{it}\int_{\R^d} \sum_{j=1}^d\frac{\partial}{\partial\xi_j}(\frac{1}{\vert \omega'(\xi)\vert^2}  \frac{\partial}{\partial\xi_j}\omega (\xi)u(\xi)) e^{i\frac{t} {\epsilon}\omega(\xi)}d\xi$$
and we get the desired result. 
\end{proof} \qquad$\Box$

Now it is easy to prove the following result:

\begin{theorem}
Let $s\geq 1$. Let $(\zeta_0,\psi_0)$ be such that $(\zeta_0,\B\psi_0)\in H^s(\R^d)$.  Then, for all $t\in\R$, the solution $(\zeta,\psi)(t)$ of the system \eqref{c4:zetapsi_eq} converges weakly to zero, and does not converge strongly, in $L^2(\R^d)$. \label{c4:theorem_nonstrongcv}
\end{theorem}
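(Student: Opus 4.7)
The plan is to exploit the explicit Fourier representation \eqref{c4:sol_zeta}--\eqref{c4:sol_psi} of the solution together with the oscillatory integral estimate of Proposition \ref{c4:oscillatory}. Weak convergence will follow from testing against an arbitrary $L^2$ function and recognising the pairing as a sum of two oscillatory integrals in $\xi$ with phase $\pm\omega(\xi)t/\epsilon$. Strong non-convergence will follow from computing $|\zeta(t)|_2^2$ directly and isolating a time-independent positive ``diagonal'' part, while the cross-term --- itself an oscillatory integral --- is killed in the limit.

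For the weak part I fix $t\in\R$ and $\varphi\in L^2(\R^d)$ and use Plancherel to write
$$(\zeta(t),\varphi)_2=\int_{\R^d}h^+(\xi)e^{i\omega(\xi)t/\epsilon}\,d\xi+\int_{\R^d}h^-(\xi)e^{-i\omega(\xi)t/\epsilon}\,d\xi,$$
where $h^\pm=\tfrac{1}{2}\bigl(\widehat{\zeta_0}\mp i\omega\widehat{\psi_0}\bigr)\overline{\widehat{\varphi}}$. The computation already carried out in the proof of Proposition \ref{c4:existence_linear} shows that $\omega\widehat{\psi_0}$ is a bounded-symbol multiple of $\widehat{\B\psi_0}\in L^2$; hence both $\widehat{\zeta_0}$ and $\omega\widehat{\psi_0}$ lie in $L^2(\R^d)$ and Cauchy-Schwarz gives $h^\pm\in L^1$. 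I then approximate $h^\pm$ in $L^1$-norm by $C_0^1(\R^d)$ functions and apply Proposition \ref{c4:oscillatory} to the smooth pieces, the residual $L^1$ error being uniform in $\epsilon$. The same scheme applied to $\B\psi(t)$, whose Fourier transform has an analogous oscillatory structure via \eqref{c4:sol_psi}, handles the weak convergence of the second component.

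For the strong non-convergence I set $A=\tfrac{1}{2}(\widehat{\zeta_0}-i\omega\widehat{\psi_0})$, $B=\tfrac{1}{2}(\widehat{\zeta_0}+i\omega\widehat{\psi_0})$ and expand
$$|\zeta(t)|_2^2=\int_{\R^d}(|A|^2+|B|^2)\,d\xi+2\,\mathrm{Re}\int_{\R^d}A\overline{B}\,e^{2i\omega(\xi)t/\epsilon}\,d\xi.$$
The diagonal term equals $\tfrac{1}{2}\int\bigl(|\widehat{\zeta_0}|^2+\omega^2|\widehat{\psi_0}|^2\bigr)d\xi$, which is $\epsilon$-independent and strictly positive for any non-trivial initial datum since $\{\omega=0\}$ has zero measure. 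The cross term is an oscillatory integral with $L^1$ amplitude and phase $2\omega$ (sharing the regularity and non-vanishing-gradient structure of $\omega$), and vanishes as $\epsilon\to 0$ by the same density-plus-Proposition-\ref{c4:oscillatory} argument. Thus $|\zeta(t)|_2$ has a strictly positive limit, incompatible with strong convergence to zero in $L^2$; the analogous computation using \eqref{c4:sol_psi} handles $\B\psi(t)$.

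The main technical point is precisely the $L^1\to C_0^1$ density step. Proposition \ref{c4:oscillatory} demands $C_0^1$ amplitudes because $\omega$ has only one-sided derivatives at $\xi=0$, so a uniform stationary-phase integration by parts is available only after isolating a neighbourhood of the origin; the uniformity in $\epsilon$ of the approximation error rests entirely on the fact that we approximate in $L^1$. No new oscillatory analysis is required beyond the proposition; the real work is book-keeping the amplitude/phase splitting and checking the integrability of $\omega\widehat{\psi_0}\,\overline{\widehat{\varphi}}$ from the hypothesis $\B\psi_0\in H^s$.
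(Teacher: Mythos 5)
Your proof is correct and follows essentially the same route as the paper: the explicit Fourier formulas \eqref{c4:sol_zeta}--\eqref{c4:sol_psi}, Proposition \ref{c4:oscillatory} for both the weak limit and the cross term, and the diagonal/oscillatory splitting of $\vert\zeta(t)\vert_2^2$ yielding the limit $\tfrac12(\vert\zeta_0\vert_2^2+\vert\omega(D)\psi_0\vert_2^2)$. The only difference is that you spell out the density step more carefully (approximating the $L^1$ amplitude rather than the $L^2$ data, with the $\epsilon$-uniformity of the error made explicit), which is a welcome clarification of what the paper leaves as ``one proceeds by density of $C_0^1(\R^d)$ in $L^2(\R^d)$''.
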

\begin{proof}
We prove it for $\zeta$. We first assume that $\widehat{\zeta},\widehat{\psi}\in C_0^1(\R^d)$. The weak convergence to zero is a consequence of the explicit formulation of the solution given by \eqref{c4:sol_zeta} and the Proposition \ref{c4:oscillatory}. To prove the strong non convergence, one computes:
\begin{align*}
\vert\zeta(t,.)\vert_2^2 &= \frac{1}{(2\pi)^d}\vert \widehat{\zeta}(t,.)\vert_2^2 \\
&=\frac{1}{(2\pi)^d}\int_{\R^d} \frac{1}{2} \big( (1+\cos(2\omega(\xi)\frac{t}{\epsilon}))\vert\widehat{\zeta_0}(\xi)\vert^2 + &&\omega(\xi)^2(1-\cos(2\omega(\xi)\frac{t}{\epsilon}))\vert\widehat{\psi_0}(\xi)\vert^2\big)\\& &&+Re(\widehat{\zeta_0}\overline{\widehat{\psi_0}})(\xi)\sin(2\omega(\xi)\frac{t}{\epsilon})d\xi
\end{align*} 
where we expanded the square modulus of the explicit formulation of $\widehat{\zeta}$ given by \eqref{c4:sol_zeta}. We can then conclude, using Proposition \ref{c4:oscillatory} to the following convergence:
$$\vert\zeta\vert_2^2 \underset{\epsilon\rightarrow 0}{\longrightarrow} \frac{1}{2} (\vert \zeta_0\vert_2^2+\vert \omega(D)\psi_0\vert_2^2)$$ and thus $\zeta(t,.)$ does not converge strongly to zero. \par\vspace{\baselineskip}For the general case, one proceeds by density of $C_0^1(\R^d)$ in $L^2(\R^d)$. \qquad$\Box$
\end{proof}
\begin{remark}As explained above, if $\zeta,\psi$ is a solution to \eqref{c4:zeta_psi_eq}, then the Hamiltonian $$\frac{1}{2\mu}(\Go\psi,\psi)_2+\frac{1}{2}\vert\zeta\vert_2^2$$ is conserved through time. Since $\frac{1}{\mu}(\Go\psi,\psi)_2 \sim \vert\B\psi\vert_2^2$, the strong convergence of both $\zeta$ and $\psi$ to zero cannot happen (except for zero initial conditions). However, it would have been possible that some transfers of energy occur between $\zeta$ and $\psi$, with one of the unknown strongly converging to zero. The Theorem \ref{c4:theorem_nonstrongcv} states that such behavior does not happen. \end{remark}

\subsection{Lack of strong convergence for the full Water-Waves equations in dimension 1}\label{c4:lackstrong}
In all this section, we work with a flat bottom and in dimension $1$: $$d=1,\qquad b=0.$$ The study in dimension $2$ should however not be hard to do, if one gets a dispersive estimate of the form of Theorem \ref{c4:dispersive_estimate} for the Water-Waves operator in dimension $2$. In the previous section, we proved that the solutions of the linearized Water-Waves equations \eqref{c4:zetapsi_eq} weakly converge as $\epsilon$ goes to zero in $L^2(\R)$, and do not converge strongly. We now establish a similar result for the solutions of the full nonlinear Water-Waves system \eqref{c4:ww_equu}. To this purpose, we write this system under the form:
\begin{align*}
\begin{cases}
\epsilon\partial_t\zeta-\frac{1}{\mu}\Go\psi = \epsilon f\\
\epsilon\partial_t\psi+\zeta = \epsilon g\\
\end{cases}
\end{align*}
with $$f=\frac{1}{\mu\epsilon}(G[\epsilon\zeta]\psi-\Go\psi)$$ and $$g = - (\frac{1}{2}\vert\nablag\psi\vert^2-\frac{1}{\mu}\frac{(G[\epsilon\zeta]\psi+\epsilon\mu\nablag\zeta\cdot\nablag\psi)^2}{2(1+\epsilon^2\mu\vert\nablag\zeta\vert^2}).$$
\begin{remark}
As suggested by the notations, the quantity $f$ is of size $O(1)$  with respect to $\epsilon$ (although it has a $\frac{1}{\epsilon}$ factor), as given by the asymptotic extension of $G[\epsilon\zeta,0]$ given later in Proposition \ref{c4:asymptotic_G}. We are going to treat the non-linear terms $f,g$ as perturbations of the linear equation as $\epsilon$ goes to zero. \vspace{\baselineskip}
\end{remark}

For this proof, we need to use a dispersive estimate for the linear Water-Waves equations with flat bottom, proved in \cite{mesognon2}:
\begin{theorem}
Let \begin{equation*}\omega : \left\{ \begin{aligned}\R &\longrightarrow\R \\
\xi &\longmapsto \sqrt{\frac{\vert\xi\vert\tanh(\sqrt{\mu}\vert\xi\vert)}{\sqrt{\mu}}}.
\end{aligned}\right. \end{equation*}
Then, there exists $C>0$ independent on $\mu$ such that, for all $\mu>0$:
$$\forall t>0,\qquad \forall \varphi\in\mathcal{S}(\R)\qquad \vert e^{it\omega(D)}\varphi\vert_\infty \leq C(\frac{1}{\mu^{1/4}}\frac{1}{(t/\sqrt{\mu})^{1/8}}+\frac{1}{(t/\sqrt{\mu})^{1/2}})(\vert\varphi\vert_{H^1}+\vert x\partial_x\varphi\vert_2).$$ \label{c4:dispersive_estimate}
\end{theorem}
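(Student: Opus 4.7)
The plan is to reduce the general $\mu$ case to a model case by rescaling, then to prove a dispersive estimate for an oscillatory integral by a careful frequency decomposition, handling low and high frequencies separately.

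First I would introduce the change of variables $\eta=\sqrt{\mu}\xi$, $\tau = t/\sqrt{\mu}$, and observe that $t\omega(\xi)=\tau\tilde\omega(\eta)$ with $\tilde\omega(\eta)=\sqrt{|\eta|\tanh|\eta|}$ a $\mu$-independent phase. Substituting in the oscillatory integral representation
$$e^{it\omega(D)}\varphi(x)=\frac{1}{2\pi}\int_\R e^{i(x\xi+t\omega(\xi))}\hat\varphi(\xi)\,d\xi,$$
the problem reduces to proving, for the $\mu=1$ operator $e^{i\tau\tilde\omega(D)}$, a bound of the form $\Vert e^{i\tau\tilde\omega(D)}\tilde\varphi\Vert_\infty\leq C(\tau^{-1/8}+\tau^{-1/2})(\Vert\tilde\varphi\Vert_{H^1}+\Vert y\partial_y\tilde\varphi\Vert_{2})$ with $C$ absolute; undoing the scaling will then reintroduce the stated $\mu$-powers.

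Next I would split the frequency domain using a smooth partition $1=\chi_L(\eta)+\chi_H(\eta)$ with $\chi_L$ supported in $|\eta|\leq 2$ and $\chi_H$ in $|\eta|\geq 1$, and analyse $e^{i\tau\tilde\omega(D)}\tilde\varphi=I_L+I_H$ accordingly. In the high-frequency zone, a Taylor expansion at infinity gives $\tilde\omega''(\eta)\asymp-\tfrac14|\eta|^{-3/2}$, which never vanishes, and a standard Van der Corput argument of order $2$ on Littlewood–Paley dyadic blocks $\{|\eta|\sim 2^k\}$, $k\geq 0$, produces the expected $\tau^{-1/2}$ decay, the loss at high frequency being absorbed by the $H^1$ norm of $\tilde\varphi$ after Cauchy–Schwarz and $\ell^2$ summation over $k$. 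This is the standard gravity-wave dispersion estimate.

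The main obstacle is the low-frequency piece $I_L$, where the phase is not smooth at the origin and where $\tilde\omega''(0)=0$ (one has $\tilde\omega(\eta)=|\eta|-|\eta|^3/6+O(\eta^5)$ on each half-line, so $\tilde\omega''$ vanishes at $0$ while $\tilde\omega'''(0)=-1\neq 0$). I would further decompose $\chi_L$ dyadically, write $\chi_L(\eta)\hat{\tilde\varphi}(\eta)=\frac{1}{\eta}\cdot\eta\chi_L(\eta)\hat{\tilde\varphi}(\eta)$ to exploit the fact that $\eta\hat{\tilde\varphi}$ lies in $H^1_\eta$, hence in $L^\infty_\eta$, by the hypothesis $y\partial_y\tilde\varphi\in L^2$, and then combine two competing bounds on each dyadic band of scale $\lambda$: a Van der Corput estimate of order $3$ (using $\tilde\omega'''(0)\ne 0$) yielding $\tau^{-1/3}$, and a trivial $L^1$ bound yielding $O(\lambda)$. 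Optimising the cut-off $\lambda$ between these regimes and summing over dyadic scales gives the $\tau^{-1/8}$ rate. Assembling the low- and high-frequency contributions and undoing the rescaling yields the theorem. The delicate point is twofold: the non-smoothness of $\tilde\omega$ at $0$ (handled by working separately on $\{\eta>0\}$ and $\{\eta<0\}$), and the precise interpolation producing the fractional exponent $1/8$, which forces the weighted norm $\Vert y\partial_y\tilde\varphi\Vert_2$ rather than a pure $H^s$ norm on the right-hand side.
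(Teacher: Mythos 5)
This theorem is not proved in the paper at all: it is imported verbatim from the reference \cite{mesognon2}, so there is no internal proof to compare against. Judged on its own, your architecture (rescale $\eta=\sqrt{\mu}\xi$, $\tau=t/\sqrt{\mu}$ to normalize the phase to $\tilde\omega(\eta)=\sqrt{|\eta|\tanh|\eta|}$; split frequencies; Van der Corput at high frequency; interpolation at the degenerate point $\eta=0$) is the natural and essentially correct route, and your Taylor expansions of $\tilde\omega$ at $0$ and at infinity are right. But two points do not survive scrutiny.

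First, the low-frequency numerology is wrong. The ``trivial $L^1$ bound yielding $O(\lambda)$'' on a band $|\eta|\leq\lambda$ requires $\widehat{\varphi}\in L^\infty$, i.e.\ $\varphi\in L^1$, which is \emph{not} controlled by $|\varphi|_{H^1}+|x\partial_x\varphi|_2$ (these norms only give $|\eta\widehat{\tilde\varphi}|_{H^1_\eta}<\infty$, hence $|\widehat{\tilde\varphi}(\eta)|\lesssim|\eta|^{-1}$, which is not bounded at the origin). The available trivial bound is $\int_{|\eta|\leq\lambda}|\widehat{\tilde\varphi}|\leq C\lambda^{1/2}|\tilde\varphi|_2$ by Cauchy--Schwarz. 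Moreover, with the correct amplitude factors ($\|\widehat{\tilde\varphi}\|_{L^\infty(|\eta|\sim\lambda)}+\|\partial_\eta\widehat{\tilde\varphi}\|_{L^1(|\eta|\sim\lambda)}\lesssim\lambda^{-1}A$), your order-$3$ Van der Corput route gives $\tau^{-1/3}\lambda^{-1}A$ per band, which balanced against $\lambda^{1/2}$ yields $\tau^{-1/9}$ --- \emph{weaker} than the stated $\tau^{-1/8}$ for $\tau\geq1$. The exponent $1/8$ actually comes from Van der Corput of order $2$ in the low-frequency region, using $|\tilde\omega''(\eta)|\gtrsim|\eta|$ on $|\eta|\sim\lambda$: this gives $(\tau\lambda)^{-1/2}\lambda^{-1}A=\tau^{-1/2}\lambda^{-3/2}A$ per band, and balancing against $\lambda^{1/2}$ at $\lambda_0=\tau^{-1/4}$ produces exactly $\tau^{-1/8}$. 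So you have invoked the wrong order of Van der Corput and the wrong trivial bound precisely in the regime you correctly identified as the delicate one.

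Second, the reduction ``undoing the scaling reintroduces the stated $\mu$-powers'' is too quick. Under $\tilde\varphi(y)=\varphi(\sqrt{\mu}y)$ one has $|\tilde\varphi|_2=\mu^{-1/4}|\varphi|_2$, $|\partial_y\tilde\varphi|_2=\mu^{1/4}|\partial_x\varphi|_2$ and $|y\partial_y\tilde\varphi|_2=\mu^{-1/4}|x\partial_x\varphi|_2$, so a uniform estimate $C(\tau^{-1/8}+\tau^{-1/2})(|\tilde\varphi|_{H^1}+|y\partial_y\tilde\varphi|_2)$ transforms into $C\mu^{-1/4}(\tau^{-1/8}+\tau^{-1/2})(\dots)$, putting $\mu^{-1/4}$ in front of \emph{both} terms, whereas the theorem attaches it only to the $\tau^{-1/8}$ term. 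To recover the stated asymmetric $\mu$-dependence you must track which norm pays for which frequency regime (the high-frequency piece must be billed to $|\partial_y\tilde\varphi|_2=\mu^{1/4}|\partial_x\varphi|_2$, not to $|\tilde\varphi|_2$), rather than quoting a single $\mu$-free model estimate.
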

\begin{remark} The dispersion in $\frac{1}{t^8}$ stated by this Theorem is absolutely not optimal. Actually, one should expect a decay of order $\frac{1}{t^{1/3}}$ in dimension $1$ (see for instance \cite{mesognon2} for variants of Theorem \ref{c4:dispersive_estimate}). However, the $\frac{1}{t^{1/8}}$ decay suffices to prove the main result of this section.
\end{remark}
In view of use of Theorem \ref{c4:dispersive_estimate}, we also need a local existence result in weighted Sobolev spaces, proved also in \cite{mesognon2}. For all $N\in\N$, we define $\E^N_x$  by $$\E^N_x =  \E^N(\zeta,\psi)+\sum_{1\leq \vert\alpha\vert \leq N-2} \vert x\zetaa\vert_2^2+\vert\B x\psia\vert_2^2.$$
\begin{theorem}\label{c4:wlocal}
 Let us consider the assumption of Theorem \ref{c4:uniform_result}, and then consider $(\zeta,\psi)$ the unique solution provided by the Theorem \ref{c4:uniform_result}, of the Water-Waves equation \eqref{c4:ww_equu}. If $(\zeta^0,\psi^0)\in \E^N_x$, then one has $$(\zeta,\psi)\in L^\infty([0;T],\mathcal{E}^N_x),$$ with $$(\zeta,\psi)_{L^\infty([0;T],\mathcal{E}^N_x)} \leq C_2.$$
\end{theorem}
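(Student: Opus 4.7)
The existence and the non-weighted bounds come for free from Theorem~\ref{c4:uniform_result}; the only task is to propagate the additional weighted norms $|x\zetaa|_2$ and $|\B x\psia|_2$ for $1\le |\alpha|\le N-2$ on the same time interval $[0,T]$. The natural strategy is to differentiate the Water-Waves system, pass to the Alinhac good unknowns $(\zetaa,\psia)$ as in the proof of Theorem~\ref{c4:uniform_result} in \cite{david}, and then perform a weighted energy estimate after multiplication by $x$.

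More precisely, for $1\le |\alpha|\le N-2$ the Alinhac good unknowns satisfy a ``quasilinearized'' version of \eqref{c4:ww_equu} of the schematic form
\begin{equation*}
\begin{cases}
\epsilon \dt \zetaa + \epsilon \Vu\cdot\nablag \zetaa - \frac{1}{\mu}G[\epsilon\zeta]\psia = R_\alpha,\\
\epsilon \dt \psia + \epsilon \Vu\cdot\nablag\psia + \rt\,\zetaa = S_\alpha,
\end{cases}
\end{equation*}
where the remainders $R_\alpha,S_\alpha$ are controlled in $L^2$ (resp.\ $\mathfrak{P}$-weighted $L^2$) by $\E^N(\zeta,\psi)$, hence by $C_2$ on $[0,T]$. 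Multiplying each equation by $x$ produces a system for $(x\zetaa,x\psia)$ of exactly the same shape, up to commutators:
\begin{equation*}
\epsilon\dt(x\zetaa)+\epsilon \Vu\cdot\nablag(x\zetaa)-\frac{1}{\mu}G[\epsilon\zeta](x\psia)
=x R_\alpha+\epsilon[\Vu\cdot\nablag,x]\zetaa-\tfrac{1}{\mu}[G[\epsilon\zeta],x]\psia,
\end{equation*}
and analogously for the $\psi$-equation with a commutator $[\rt,x]\zetaa=0$ (multiplication operator). The commutators $[\Vu\cdot\nablag,x]=\Vu^\gamma$ are simply multiplications by components of the good-unknown velocity field and are therefore bounded in $L^2$ by $\E^N$. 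The crucial commutator is $[G[\epsilon\zeta],x]\psi$; using the shape-derivative/pseudo-differential structure of the Dirichlet-Neumann operator one shows it acts as an order-zero operator and satisfies
\begin{equation*}
\big|[G[\epsilon\zeta],x]\psi\big|_2 \le C\!\left(\tfrac{1}{h_{\min}},\E^N(\zeta,\psi)\right)|\B\psi|_2,
\end{equation*}
which is already controlled on $[0,T]$ by $C_2$. An analogous control holds for the $\psi$-equation commutator $[\B G[\epsilon\zeta],x]$ after symmetrizing by $\B$.

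One then runs the usual symmetric energy estimate of \cite{david} Chapter~4 on the weighted system, using the symmetry and positivity of $\frac{1}{\mu}G$ (with coercivity \eqref{c4:equiDN}) and the positivity of $\rt\ge a_0/2$ valid on $[0,T]$. This yields an energy of the form
\begin{equation*}
E_\alpha(t)=|x\zetaa|_2^2+|\B x\psia|_2^2,\qquad \tfrac{d}{dt}E_\alpha(t)\le C\big(E_\alpha(t)+1\big),
\end{equation*}
with $C=C(\E^N(U^0),1/h_{\min},1/a_0)$ independent of $\epsilon,\mu$, and Grönwall's lemma gives $E_\alpha(t)\le C_2$ on $[0,T]$. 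Summing over $1\le |\alpha|\le N-2$ and combining with the bound from Theorem~\ref{c4:uniform_result} proves the claim.

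\textbf{Main obstacle.} The delicate part is the commutator estimate $|[G[\epsilon\zeta],x]\psi|_2$ and its analogue after conjugation by $\B$: one cannot simply differentiate inside $G$, and one must exploit that $[G,x]$ is built from the $x$-derivative of the kernel of $G$, which in turn is the shape derivative $dG(\cdot)$ already analyzed in Appendix~\ref{c4:appendixA}. Once this commutator is shown to be of order $0$ (and not order $1$, as a naive count would suggest), the rest of the argument is a routine weighted Grönwall following the pattern of Theorem~\ref{c4:uniform_result}.
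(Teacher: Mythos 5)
First, a point of reference: the paper does not prove this theorem at all --- it is imported verbatim from \cite{mesognon2} (``a local existence result in weighted Sobolev spaces, proved also in \cite{mesognon2}''), so there is no in-paper argument to compare against. Your plan --- quasilinearize with the Alinhac good unknowns, multiply by $x$, control the commutators, and close a weighted Grönwall on top of the unweighted estimate of Theorem \ref{c4:uniform_result} --- is indeed the standard and presumably intended route, and the overall architecture is sound.

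That said, as a proof it has two genuine gaps, both located exactly where you flag the difficulty. First, your justification of the key commutator bound is wrong as stated: $[G[\epsilon\zeta],x]$ is \emph{not} the shape derivative $dG(\cdot)\psi$. The shape derivative measures variation of $G$ with respect to the surface $\zeta$; what is related to it is the \emph{horizontal derivative} identity $\partial_x(G[\zeta]\psi)=G[\zeta]\partial_x\psi+dG(\partial_x\zeta)\psi$, coming from translation invariance, not multiplication by $x$. To estimate $[G,x]\psi$ one must instead work on the elliptic problem itself: if $\Phi$ solves \eqref{c4:dirichletneumannnondim}, then $x\Phi$ solves a Laplace problem with interior source $2\mu\partial_x\Phi$ and modified boundary data, and $[G,x]\psi$ is read off from the normal trace of the corrector; the order-zero character (and the correct powers of $\mu$, which matter throughout this paper) comes out of that computation, not out of Appendix \ref{c4:appendixA}. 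For the flat operator $\Go$ the Fourier computation $[x,\Go]=-i\frac{d}{d\xi}\big(\sqrt{\mu}\xig\tanh(\sqrt{\mu}\xig)\big)$ confirms the order count, but the nonflat case is the one you actually need. Second, you never address the term $xR_\alpha$: the remainders $R_\alpha,S_\alpha$ contain top-order factors $\partial^\alpha\zeta$, $\partial^\alpha\psi$, so $xR_\alpha$ is only controlled by \emph{weighted} norms of order $|\alpha|$ together with \emph{unweighted} norms of higher order --- this is precisely why $\E^N_x$ only carries the weights up to $|\alpha|\le N-2$, a restriction your argument neither uses nor explains. Without a precise estimate of $xR_\alpha$ in terms of $\E^N_x$ and $\E^N$, the Grönwall inequality does not close.
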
 
We now state the main result of this section:
\begin{theorem}\label{c4:lintofull}
Let $(\zeta_0,\psi_0)$ be such that $(\zeta_0,\psi_0)\in\E^N_x$ .  Let denote $T>0$, $(\zeta^{WW},\psi^{WW})$ the solution of the Water-Waves equations  \eqref{c4:ww_equu} in $(\zeta,\psi)\in L^\infty([0;T],\mathcal{E}^N_x)$  given by Theorem \ref{c4:uniform_result} on $[0;T]$ and $(\zeta^L,\psi^L)$ the solution of the linearized equation \eqref{c4:zetapsi_eq} given by Proposition \ref{c4:existence_linear}, both with initial condition $(\zeta_0,\psi_0)$. Then, one has:
\begin{equation}\vert (\zeta^L,\B\psi^L)-(\zeta^{WW},\B\psi^{WW})\vert_{L^\infty((0;T);L^2(\R))} \leq ( \frac{\epsilon^{1/8}}{\mu^{3/16}}+\epsilon^{1/2}\mu^{1/4})C_2\label{c4:decroissance_l2}\end{equation} where $C_2$ is given by Theorem \ref{c4:uniform_result}.
\end{theorem}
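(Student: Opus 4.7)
The plan is to treat the Water--Waves system as a perturbation of the linearized system, exploit the unitarity of the linear propagator in the Hamiltonian norm via Duhamel's formula, and then extract $\epsilon$-smallness from the source term by applying the dispersive estimate of Theorem~\ref{c4:dispersive_estimate} to the linear profile. Setting $\tilde U = (\tilde\zeta,\tilde\psi):=(\zeta^{WW}-\zeta^L,\psi^{WW}-\psi^L)$, the difference solves the linearized system with source $(\epsilon f,\epsilon g)$ and zero initial data, where $f,g$ are the nonlinear terms introduced at the start of Section~\ref{c4:lackstrong}. Since the operator $L$ of Proposition~\ref{c4:existence_linear} is skew-adjoint for the Hamiltonian inner product, which by \eqref{c4:equiDN} is equivalent to the norm $|(\zeta,\psi)|_\star := (|\zeta|_2^2+|\B\psi|_2^2)^{1/2}$, a standard energy identity gives
\begin{equation*}
|\tilde U(t)|_\star \le \int_0^t |F(U^{WW}(s))|_\star \, ds, \qquad F=(f,g),\ t\in[0,T].
\end{equation*}

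To bound the integrand, I would expand $f$ and $g$ using the asymptotic expansion of $G[\epsilon\zeta]$ cited in the paper as Proposition~\ref{c4:asymptotic_G}; this shows that, up to $O(\epsilon)$ remainders uniformly controlled, $f$ and $g$ are bilinear forms in $(\zeta^{WW},\psi^{WW})$. Writing $U^{WW}=U^L+\tilde U$ and placing in $L^\infty$ either a $U^L$-factor or one of the low-Sobolev pieces of $\tilde U$, one should get a product estimate of the shape
\begin{equation*}
|F(U^{WW})|_\star \le C\bigl(|U^L|_\infty+|\nablag\psi^L|_\infty+|\tilde U|_\infty+|\nablag\tilde\psi|_\infty\bigr)\bigl(|U^L|_{\mathcal{E}^N}+|\tilde U|_\star\bigr),
\end{equation*}
where $|U^L|_{\mathcal{E}^N}$ is bounded by $C_2$ thanks to Theorem~\ref{c4:uniform_result} and Sobolev embedding in $d=1$ yields $|\tilde U|_\infty+|\nablag\tilde\psi|_\infty \le C |\tilde U|_{H^{3/2+\eta}} \le CC_2$. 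The dispersive factors are then controlled by applying Theorem~\ref{c4:dispersive_estimate}, after time-rescaling by $1/\epsilon$, to the explicit spectral representation~\eqref{c4:sol_zeta}--\eqref{c4:sol_psi}; this yields
\begin{equation*}
|U^L(s)|_\infty+|\nablag\psi^L(s)|_\infty \le CC_2\,D(s), \qquad D(s):=\frac{\epsilon^{1/8}}{\mu^{3/16}s^{1/8}}+\frac{\epsilon^{1/2}\mu^{1/4}}{s^{1/2}},
\end{equation*}
the weighted norm $|x\partial_x\cdot|_2$ required on the right-hand side of Theorem~\ref{c4:dispersive_estimate} being supplied by Theorem~\ref{c4:wlocal}.

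Combining these ingredients yields
\begin{equation*}
|\tilde U(t)|_\star \le CC_2^2\int_0^t D(s)\,ds + CC_2\int_0^t (1+D(s))\,|\tilde U(s)|_\star\,ds,
\end{equation*}
and since $s^{-1/8}$ and $s^{-1/2}$ are integrable at $0$, $\int_0^T D(s)\,ds \le C_T(\epsilon^{1/8}/\mu^{3/16}+\epsilon^{1/2}\mu^{1/4})$. A Gronwall argument then closes the estimate, with the $T$-dependent constants absorbed into $C_2$. The hard part will be the bilinear bookkeeping in the estimate of $|F|_\star$: each quadratic term produced by expanding $f$ and $g$ must be arranged so that the factor placed in $L^\infty$ is systematically $U^L$, $\nablag\psi^L$, or a Sobolev-controlled piece of $\tilde U$, which is delicate for the non-local terms coming from $G[\epsilon\zeta]\psi$ and requires using the smoothness of the shape derivatives of the Dirichlet--Neumann operator.
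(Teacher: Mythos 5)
Your setup---Duhamel's formula with the propagator $e^{tL/\epsilon}$, which is an isometry for the norm $\vert\cdot\vert_\star$ equivalent to $(\vert\zeta\vert_2^2+\vert\B\psi\vert_2^2)^{1/2}$, together with the decomposition of the nonlinearity into a bilinear part plus an $O(\epsilon)$ remainder via Proposition~\ref{c4:asymptotic_G}---coincides with the paper's. But the mechanism by which you extract the $\epsilon$-smallness is different, and it has a genuine gap. By passing to $\vert\tilde U(t)\vert_\star\le\int_0^t\vert F(U^{WW}(s))\vert_\star\,ds$ you discard the oscillation of the propagator, and you then try to make $\vert F\vert_\star$ small pointwise in $s$ by splitting $U^{WW}=U^L+\tilde U$ and placing a decaying factor in $L^\infty$. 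This only works for the terms carrying a copy of $U^L$ in the $L^\infty$ slot. For the terms bilinear in $\tilde U$ and $U^{WW}$, your own product estimate produces the contribution $(\vert\tilde U\vert_\infty+\vert\nablag\tilde\psi\vert_\infty)\,\vert U^{L}\vert_{\mathcal{E}^N}$, which you bound by $CC_2\cdot C_2=O(1)$: this is neither small in $\epsilon$ nor proportional to $\vert\tilde U(s)\vert_\star$, so it cannot be absorbed into the Gronwall term, and the ``combined'' integral inequality you write down does not follow from the estimates you state. The term cannot be repaired by trading $\vert\tilde U\vert_\infty$ for $\vert\tilde U\vert_\star$ (Sobolev embedding in $d=1$ requires $H^{1/2+\eta}$, not $L^2$), nor by moving $\tilde U$ into the $\vert\cdot\vert_\star$ slot: the quadratic terms involve $\nablag\tilde\psi$, $\Go\tilde\psi$, $\nablag\tilde\zeta$, i.e.\ strictly more derivatives than $\vert\tilde\zeta\vert_2+\vert\B\tilde\psi\vert_2$ controls, so a bound of the form $\vert B(\tilde U,U^{WW})\vert_\star\le C_2\vert\tilde U\vert_\star$ fails by loss of derivatives, and the Gronwall loop in the low norm cannot close.

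The paper avoids needing any smallness or decay of the source. It keeps the propagator inside the Duhamel integral, writes $\vert\int_0^t e^{\frac{s-t}{\epsilon}L}AB(s)\,ds\vert_2^2$ as a double integral $\int_0^t\int_0^t(f_2(s),e^{\frac{u+s-2t}{\epsilon}L}f_1(u))_{L^2_x}\,du\,ds$ (a $TT^*$-type argument), and applies the $L^1\to L^\infty$ dispersive estimate of Theorem~\ref{c4:dispersive_estimate} to $e^{\frac{u+s-2t}{\epsilon}L}f_1(u)$; the factors $\epsilon^{1/8}$ and $\epsilon^{1/2}$ come from the $1/\epsilon$ in the phase, and the only inputs required are the uniform bounds $\sup_s\vert f_2(s)\vert_{L^1_x}\le C^N$ and $\sup_u(\vert f_1(u)\vert_{H^1_x}+\vert x\partial_x f_1(u)\vert_2)\le C^N$, supplied by the weighted energy estimate of Theorem~\ref{c4:wlocal}. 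To salvage your argument you would need to run this double-integral estimate on the quadratic part of the source rather than rely on the pointwise decay of $U^L$.
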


\begin{proof}In all this proof, we will denote by $C^N$ any constant of the form \begin{equation}C^N=C(\E^N_x(\zeta^0,\psi^0),\frac{1}{h_{\min}},\frac{1}{a_0})\label{c4:defCN}\end{equation} where $C$ is a non decreasing function of its arguments.
Let us define $(\zeta,\psi) = (\zeta^{WW},\B\psi^{WW})-(\zeta^L,\B\psi^L)$ which is defined on $[0;T]$. We use the evolution operator $e^{\frac{t}{\epsilon}L}$ defined by \eqref{c4:def_L} to write:

\begin{equation*}
\partial_t (e^{\frac{t}{\epsilon}L}\begin{pmatrix}
\zeta\\
\psi
\end{pmatrix}) = e^{\frac{t}{\epsilon}L} F(\begin{pmatrix}
\zeta \\
\psi
\end{pmatrix})(t)
\end{equation*} where 

\begin{equation}
F(\begin{pmatrix}
\zeta \\
\psi
\end{pmatrix}) = \begin{pmatrix}
\frac{1}{\mu\epsilon}(G(\epsilon\zeta)\psi-\Go\psi) \\
-\frac{1}{2}\vert \nablag\psi\vert_2^2-\frac{1}{\mu}\frac{(G(\epsilon\zeta)\psi+\epsilon\mu\nablag\zeta\cdot\nablag\psi)^2}{2(1+\epsilon^2\mu\vert\nablag\zeta\vert^2)}	
\end{pmatrix}\label{c4:defF}
\end{equation}
and thus one has

\begin{equation}
\forall t\in[0;T],\qquad \begin{pmatrix}
\zeta \\
\psi
\end{pmatrix} = \int_0^t e^{\frac{s-t}{\epsilon}L}F(\begin{pmatrix}
\zeta \\
\psi
\end{pmatrix})(s)ds.\label{c4:integrale_dispersive}
\end{equation}
We set  $A=\begin{pmatrix} 
I &0\\
0 & \B
\end{pmatrix}$ and look for a estimate of the $L^2$ norm of $A~^t(\zeta,\psi)$. The proof consists in using the decay estimate of the linear operator $e^{tL}$ of the Water-Waves equation of Theorem \ref{c4:dispersive_estimate}. More precisely, the operator $F$ is "almost" bilinear (at least up to a $O(\epsilon)$ order term), which allows us to control the $L^2$ norm of the integral \eqref{c4:integrale_dispersive}  by writing estimates of the form

\begin{align*}
\vert \int_0^t Ae^{\frac{s-t}{\epsilon}L} F(\begin{pmatrix}
\zeta \\
\psi
\end{pmatrix})(s)ds\vert_2 &\leq \vert \int_0^t Ae^{\frac{s-t}{\epsilon}L}B(\begin{pmatrix}
\zeta \\
\psi
\end{pmatrix},\begin{pmatrix}
\zeta \\
\psi
\end{pmatrix})(s)ds\vert_2+\epsilon C
\end{align*}
where $C$ does not depend on $\epsilon$, and with $B$ a bilinear operator. The proof then consists in using a Strichartz type of estimate, using the dispersive nature of $e^{itL}$, given by  Theorem \ref{c4:dispersive_estimate} to get a control of the remaining integral of the form $$\vert\int_0^t e^{\frac{s-t}{\epsilon}L}B(\begin{pmatrix}
\zeta \\
\psi
\end{pmatrix},\begin{pmatrix}
\zeta \\
\psi
\end{pmatrix})(s)ds\vert_2\leq (\epsilon)^{1/8} t^{\alpha}$$ with $\alpha>0$. \par\vspace{\baselineskip}

We start to write $F$ under the form $F=B+\epsilon R$ where $B$ is bilinear, and $R$ is a least of size $O(1)$ with respect to $\epsilon$.  To this purpose, recalling that $F$ is given by \eqref{c4:defF}, we get inspired by the following Proposition, which gives an asymptotic extension of $\G[\epsilon\zeta,0]$ (see \cite{david} Proposition 3.44) with respect to $\epsilon$:

\begin{proposition}
Let $t_0>d/2,s\geq 0$ and $k=0,1$. Let $\zeta\in H^{s+(k+1)/2}\cap H^{t_0+2}(\R^d)$ be such that:
$$\exists h_{\min}>0,\forall X\in\R^d,1+\epsilon\zeta(X)\geq h_{\min}$$ and $\psi\in H^{s+k/2}(\R^d)$. We get:
$$\vert \G\psi-\Go\psi-\epsilon \mathcal{G}_1\vert_{H^{s-1/2}}  \leq \epsilon^2 \mu^{\frac{3+k}{4}} C(\frac{1}{h_{\min}},\mu_{\max},\vert\zeta\vert_{H^{t_0+1}},\vert \zeta\vert_{H^{s+(k+1)/2}}) \vert\B\psi\vert_{H^{s+k/2}},$$  where $\mathcal{G}_1 = -\Go(\zeta(\Go\cdot))-\mu\nablag\cdot(\zeta\nablag\cdot)$.\label{c4:asymptotic_G}
\end{proposition}

We therefore write \begin{align}
\frac{1}{\mu\epsilon}(G(\epsilon\zeta)-\Go)\psi = \frac{1}{\mu}\G_1-\frac{1}{\mu\epsilon}(\G_1+\Go-G(\epsilon\zeta))\psi\label{c4:F_decomp1}
\end{align}
where the second term of the right hand side satisfies the following estimate, using Proposition \ref{c4:asymptotic_G} with $k=1$ and $s=1/2$:
\begin{equation}
\vert \frac{1}{\mu\epsilon}(\G_1+\Go-G(\epsilon\zeta)) \vert_2\leq C(\frac{1}{h_{\min}},\mu_{\max},\vert\zeta\vert_{H^{t_0+1}},\vert \zeta\vert_{H^{1/2+1}}) \vert \B\psi\vert_{H^1}.	\label{c4:F_control1}
\end{equation}
The second component of $F$ (recall that it is given by \eqref{c4:defF}) is easier to decompose, and using Proposition \ref{c4:314} one gets 
\begin{equation}
-\frac{1}{2}\vert\nablag\psi\vert^2- \frac{1}{\mu}\frac{(G(\epsilon\zeta)\psi+\epsilon\mu\nablag\zeta\cdot\nablag\psi)^2}{2(1+\epsilon^2\mu\vert\nablag\zeta\vert^2)}	=-\frac{1}{2}\vert\nablag\psi\vert^2 - \frac{1}{\mu}(G(\epsilon\zeta)\psi)^2+\epsilon R(\psi)\label{c4:F_decomp2}
\end{equation}
with \begin{equation}\vert R(\psi)\vert_L^2 \leq C^N\label{c4:F_control2},\end{equation} where $C^N$ is given by \eqref{c4:defCN}. Using \eqref{c4:F_decomp1} with the control \eqref{c4:F_control1}, and \eqref{c4:F_decomp2} with the control \eqref{c4:F_control2}, one gets:
\begin{equation}F( \begin{pmatrix}
\zeta \\
\psi
\end{pmatrix}) = B(\begin{pmatrix}
\zeta \\
\psi
\end{pmatrix}),\begin{pmatrix}
\zeta \\
\psi
\end{pmatrix}))+\epsilon R\begin{pmatrix}
\zeta \\
\psi
\end{pmatrix})\label{c4:F_decomp}\end{equation}
with 
\begin{equation} B(\begin{pmatrix}
\zeta \\
\psi
\end{pmatrix}),\begin{pmatrix}
\zeta \\
\psi
\end{pmatrix}))=\begin{pmatrix}
\frac{1}{\mu}\G_1\psi \\
-\frac{1}{2}\vert\nablag\psi\vert^2-\frac{1}{\mu}G(\epsilon\zeta)\psi)^2
\end{pmatrix}
\label{c4:F_deomp}
\end{equation}
and \begin{equation}\vert  R\begin{pmatrix}
\zeta \\
\psi
\end{pmatrix})\vert_2\leq C^N.\label{c4:F_reste_control}\end{equation}
We therefore get, using \eqref{c4:F_decomp} and \eqref{c4:F_reste_control} (recall that $C^N$ is a constant of the form \eqref{c4:defCN}): \begin{align*}
\vert \int_0^t A e^{\frac{s-t}{\epsilon}L}F(\begin{pmatrix}
\zeta \\
\psi
\end{pmatrix})(s)ds\vert_2 &\leq \vert  \int_0^t e^{\frac{s-t}{\epsilon}L} A B(s)ds\vert_2+\epsilon C^N.
\end{align*}
We denote in the following lines $(\cdot,\cdot)_{L^2_x}$ the $L^2$ scalar product with respect to the space variable. One writes:
\begin{align*}
\vert  \int_0^t e^{\frac{s-t}{\epsilon}L} A B(s)ds\vert_2^2 &= \vert  \int_0^t\int_0^t (e^{\frac{s-t}{\epsilon}L} A B(s),e^{\frac{u-t}{\epsilon}L}AB(u))_{L^2_x}duds\vert.
\end{align*}
Now, note that $Ae^{itL}$ is sum of terms of the form $$\begin{pmatrix}
1 &i\omega(D)\\
+\frac{\B}{i\omega(D)} &\B
\end{pmatrix}e^{it\omega(D)}$$ and therefore, we are led to estimate terms of the form \begin{equation} \int_0^t\int_0^t (e^{\frac{s-t}{\epsilon}L}  f_2(s),e^{\frac{u-t}{\epsilon}L} f_1(u))_{L^2_x} duds,\label{c4:termsoftheform}\end{equation} where $f_1,f_2$ are terms of the form
\begin{equation}\frac{1}{\mu}\G_1,\qquad \omega(D)\frac{1}{\mu}G(\epsilon\zeta)^2,\qquad \omega(D)\vert\nablag\psi\vert^2,\qquad \frac{\B}{\omega(D)}\frac{1}{\mu}\G_1,\qquad \B\frac{1}{\mu}G(\epsilon\zeta)^2,\qquad \B \vert\nablag\psi\vert_2^2.\label{c4:forme_fi}\end{equation}One has to notice that the control related to the latter three terms can be deduced from the control of the first three terms since $\B$ acts like the square root $\omega(D)$ of $\Go$. Indeed, one has \begin{align*}\omega(D) &= \frac{\D}{(1+\sqrt{\mu}\D)^{1/2}}\sqrt{\frac{\tanh(\sqrt{\mu}\D)(1+\sqrt{\mu}\D)}{\sqrt{\mu}\D}}\\
&= \B \sqrt{\frac{\tanh(\sqrt{\mu}\D)(1+\sqrt{\mu}\D)}{\sqrt{\mu}\D}}
\end{align*}
and there exists $C_1$, $C_2>0$ independent of $\mu,\epsilon$ such that:
\begin{equation}\forall \xi\in\R^d,\qquad C_1\leq \sqrt{\frac{\tanh(\sqrt{\mu}\xig)(1+\sqrt{\mu}\xig)}{\sqrt{\mu}\xig}} \leq C_2.\label{c4:equivalence_B_omega}\end{equation}
We now estimate terms of the form \eqref{c4:termsoftheform} using a similar technique as for Strichartz estimates for dispersives PDE's. One computes, using the symmetry of $e^{itL}$:
\begin{align*}
 \int_0^t\int_0^t (e^{\frac{s-t}{\epsilon}L}  f_2(s),e^{\frac{u-t}{\epsilon}L} f_1(u))_{L^2_x} duds = \int_0^t\int_0^t   (f_2(s),e^{\frac{u+s-2t}{\epsilon}L} f_1(u))_{L^2_x} duds. \end{align*}
Now recall that $f_i$ are of the form \eqref{c4:forme_fi}. We are not treating all the possible cases, but only the most difficult one (the others are treated by the same technique), which is $f_1=f_2=\frac{1}{\mu}\G_1\psi$. Using the definition of $\G_1$ given in Proposition \ref{c4:asymptotic_G}, one computes, integrating by parts:
\begin{align}
&\int_0^t\int_0^t   (f_2(s),e^{\frac{u+s-2t}{\epsilon}L} f_1(u))_{L^2_x} duds\nonumber \\&= \int_0^t\int_0^t   (-\frac{1}{\mu}\Go(\zeta\Go(\psi))(s)-\nablag\cdot(\zeta\nablag\psi)(s),e^{\frac{u+s-2t}{\epsilon}L} \frac{1}{\mu}\G_1\psi(u))_{L^2_x} duds\nonumber\\
&= \int_0^t\int_0^t   (-\frac{1}{\mu}\zeta\Go(\psi)(s),e^{\frac{u+s-2t}{\epsilon}L} \Go\frac{1}{\mu}\G_1(u))_{L^2_x}+(\zeta\nablag\psi(s),e^{\frac{u+s-2t}{\epsilon}L}\nablag \frac{1}{\mu}\G_1\psi(u))_{L^2_x} duds.
\label{c4:termtotreat}\end{align}
We only control the first integral of the right hande side of \eqref{c4:termtotreat} (the other one is estimated by a similar technique). We set \begin{equation}\tilde{f}_2 =-\frac{1}{\mu}\zeta\Go\psi(s),\qquad \tilde{f}_1 = \Go \frac{1}{\mu}\G_1\psi(u)
\label{c4:def_ftilde}
\end{equation}
and we now use the dispersive estimate of Theorem \ref{c4:dispersive_estimate}:
\begin{align*}
\vert \int_0^t\int_0^t   (\tilde{f}_2(s),e^{\frac{u+s-2t}{\epsilon}L}( \tilde{f}_1)_{L^2_x} duds\vert &\leq \int_0^t\int_0^t  \vert\tilde{f_2}\vert_{L^1_x}\vert e^{\frac{u+s-2t}{\epsilon}L}( \tilde{f}_1)\vert_{L^\infty_x}\\
 &\leq C^N\int_0^t\int_0^t \frac{\epsilon^{1/8}}{\mu^{3/16}}\frac{1}{(2t-u-s)^{1/8}}\\&+\frac{\epsilon^{1/2}\mu^{1/4}}{(2t-u-s)^{1/2}}(\vert \tilde{f}_1(u)\vert_{H^1_x}+\vert x\partial_x \tilde{f}_1(u)\vert_{L^2_x})duds \\
&\leq( \frac{\epsilon^{1/8}}{\mu^{3/16}}t^{7/8}+\epsilon^{1/2}\mu^{1/4}t^{3/2}) C^N
\end{align*}
if one can prove the following controls:
\begin{equation}
\underset{u\in[0;T]}{\sup} \vert\tilde{f}_2(u)\vert_{ L^1_x} + \vert \tilde{f}_1\vert_{H^1_x}+\vert x\partial_x \tilde{f}_1(u)\vert_{L^2_x} \leq C^N,\qquad i=1,2.
\label{c4:controls_to_prove}
\end{equation}
One has, using Cauchy-Schwarz inequality:
\begin{align*}
 \vert\tilde{f}_2(u)\vert_{ L^1_x}  &\leq \vert\zeta\vert_2\vert\frac{1}{\mu}\Go\psi\vert_2\\
 &\leq \vert\zeta\vert_2\vert\B\psi\vert_{H^1}\\
 &\leq C^N
\end{align*}
where we used Proposition \ref{c4:314} with $\zeta=b=0$ to control $\frac{1}{\mu}\Go\psi$. We now focus on the most difficult term  of \eqref{c4:controls_to_prove} which is $\vert x\partial_x \Go \frac{1}{\mu}\G_1\psi\vert_{L^2_x}$. We use again the definition of $\G_1$ given by Proposition \ref{c4:asymptotic_G}, and we control $\frac{1}{\mu}\vert x\partial_x \Go \Go(\zeta\Go(\psi))\vert_2$ (the other one is similar). One computes:
\begin{align}
\frac{1}{\mu}\vert x\partial_x \Go \Go(\zeta\Go(\psi))\vert_2 &\leq \frac{1}{\mu}\vert x\Go^2(\partial_x\zeta)\Go\psi\vert_2+\frac{1}{\mu}\vert x\Go^2(\zeta\Go(\partial_x\psi))\vert_2. \label{c4:calcul1}
\end{align}
We only control the first term of the right hand side of \eqref{c4:calcul1} (the other one is similar):
\begin{align*}
\vert x\Go^2(\partial_x\zeta)\Go\psi\vert_2 &\leq\vert \Go^2 x(\partial_x\zeta) \Go\psi\vert_2+\vert [x,\Go^2](\partial_x\zeta)\Go\psi\vert_2.
\end{align*}
Now, note that for all $f\in\mathcal{S}(\R)$, one has:
\begin{align*}
\vert [x,\Go^2]f\vert_2 &= \vert \frac{d}{d\xi} (\sqrt{\mu}\vert\xi\vert\tanh(\sqrt{\mu}	\vert\xi\vert))^2\widehat{f}\vert_2 \\
&\leq \mu^2\vert f\vert_{H^3}
\end{align*}
using the definition of $\Go$ given by \eqref{c4:defgo}. Therefore, one has:
\begin{align*}
\frac{1}{\mu}\vert x\Go^2(\partial_x\zeta)\Go\psi\vert_2 &\leq  \mu\vert x(\partial_x\zeta) \Go\psi\vert_{H^4}+ \mu\vert (\partial_x\zeta)\Go\psi\vert_{H^3}\\
&\leq \mu^2 \vert x(\partial_x\zeta) \vert_{H^4}\vert \Go\psi\vert_{H^4},
\end{align*}
where we used the gross estimate $\vert\G_0^2 u\vert \leq \vert u\vert_{H^4}$. One can conclude using Proposition \ref{c4:314} with $\zeta=b = 0$,  and Theorem \ref{c4:wlocal}.

This achieves the proof of the controls \eqref{c4:controls_to_prove}. \par\vspace{\baselineskip}

\textbf{Conclusion:} We proved 	$$\vert (\zeta,\B\psi)(t)\vert_2 \leq (\frac{\epsilon^{1/8}}{\mu^{3/16}}t^{7/8}+\epsilon^{1/2}\mu^{1/4}t^{3/2})C^N.$$
One can then take the supremum over $t\in[0;T]$ and get the desired result.\qquad$\Box$
\end{proof}
\begin{remark}It is very important to note that according to Theorem \ref{c4:lintofull} the linear effects are a good approximation in the rigid lid regime if the ratio $\frac{\epsilon}{\mu^{3/2}}$ is not too large. For instance, if $\epsilon = \mu^{3/2}$, the strong convergence of the solutions of the fully nonlinear system to the solution of the linear system is not true anymore. In this case, the dispersive effects would be at the same order as the non-linear effects, and one should expect a behavior of solutions similar to one of the Korteweg de Vries's equation (such as solitary waves type behavior, see for instance \cite{benjamin1972stability}). In particular, it is no longer possible to treat the non-linearities $f,g$ as perturbations of the linearized system. A more precise study should be lead, and one should consider as approximation of the full nonlinear Water-Waves equation an equation where the nonlinear terms of size $\epsilon$ are taken into account (see for instance \cite{sautxu}). If $\epsilon \ll \mu^{3/2}$, then the non-linear effects are dominants on the dispersive effects, and one should expect breaking waves. This is the main interest of having a dispersive result of the form of Theorem \ref{c4:dispersive_estimate} which depends on the small parameters. \end{remark}\par\vspace{\baselineskip}

It is now easy to prove the following Corollary:

\begin{corollary}
Let $(\zeta_0,\psi_0)$ be such that $(\zeta_0,\psi_0)\in\E^N_x$ .  Let denote $T>0$, $(\zeta,\psi)$ the solution of the Water-Waves equations  defined on $[0;T]$ of \eqref{c4:ww_equu}. Then, $(\zeta,\B\psi)$ converge weakly to zero in $L^\infty_t L^2_x$ and does not converge strongly in this space.
\end{corollary}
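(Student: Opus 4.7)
The plan is to combine the two main results of the section, Theorem \ref{c4:theorem_nonstrongcv} and Theorem \ref{c4:lintofull}, by a triangle inequality argument. Writing
\begin{equation*}
(\zeta^{WW},\B\psi^{WW}) = (\zeta^L,\B\psi^L) + \bigl[(\zeta^{WW},\B\psi^{WW}) - (\zeta^L,\B\psi^L)\bigr] =: (\zeta^L,\B\psi^L) + r^{\epsilon},
\end{equation*}
where $(\zeta^L,\psi^L)$ is the solution of the linearized system \eqref{c4:zetapsi_eq} with the same initial data $(\zeta_0,\psi_0)$, Theorem \ref{c4:lintofull} gives $\|r^{\epsilon}\|_{L^{\infty}([0,T];L^2(\R))} \leq (\epsilon^{1/8}/\mu^{3/16} + \epsilon^{1/2}\mu^{1/4})C_2$, so $r^{\epsilon}\to 0$ strongly in $L^{\infty}([0,T];L^2(\R))$ as $\epsilon\to 0$.

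For weak convergence, fix $t\in[0,T]$ and a test function $\varphi\in L^2(\R)$. Then
\begin{equation*}
(\zeta^{WW}(t),\varphi)_2 = (\zeta^L(t),\varphi)_2 + (r_1^{\epsilon}(t),\varphi)_2.
\end{equation*}
The second term tends to zero by the strong convergence of $r^{\epsilon}$ together with the Cauchy-Schwarz inequality, while the first tends to zero by Theorem \ref{c4:theorem_nonstrongcv} applied to the linear problem. The same argument applies to $\B\psi^{WW}(t)$, yielding the weak convergence to zero in $L^2(\R)$ at every time $t\in[0,T]$, which transports trivially to weak-$*$ convergence in $L^{\infty}_t L^2_x$ by dominated convergence, using the uniform bound on $(\zeta^{WW},\B\psi^{WW})$ provided by Theorem \ref{c4:uniform_result} together with \eqref{c4:equivalence_B_omega}.

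For the failure of strong convergence I would argue by contradiction: assume that $(\zeta^{WW},\B\psi^{WW})\to 0$ strongly in $L^{\infty}([0,T];L^2(\R))$ as $\epsilon\to 0$. Then at any fixed $t\in[0,T]$ one would also have $(\zeta^{WW}(t),\B\psi^{WW}(t))\to 0$ strongly in $L^2(\R)$, and combining with the strong convergence of $r^{\epsilon}(t)$ to zero would yield $(\zeta^L(t),\B\psi^L(t))\to 0$ strongly in $L^2(\R)$. Passing through the equivalence \eqref{c4:equivalence_B_omega} between $\B$ and $\omega(D)$ in Fourier, this would contradict the non-strong convergence assertion in Theorem \ref{c4:theorem_nonstrongcv} (unless $(\zeta_0,\psi_0)=0$, which we may exclude).

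There is no real obstacle here: the corollary is essentially the immediate combination of the quantitative nonlinear-to-linear error estimate \eqref{c4:decroissance_l2} with the qualitative behavior of the linearized flow established in Section \ref{c4:nonstrongconvergence}. The only mild point of care is to keep track of the $\B$ factor on the potential throughout, which is handled via the pointwise symbol equivalence \eqref{c4:equivalence_B_omega} between $\B$ and $\omega(D)$.
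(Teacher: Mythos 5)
Your proof is correct and follows essentially the same route as the paper: decompose the nonlinear solution as the linear solution plus a remainder, kill the remainder with the strong-convergence estimate of Theorem \ref{c4:lintofull}, and read off the weak/non-strong behavior from the linear flow. The only (harmless) difference is that you cite the fixed-time stationary-phase result of Theorem \ref{c4:theorem_nonstrongcv} for the linear part, whereas the paper re-derives the weak convergence from the dispersive estimate of Theorem \ref{c4:dispersive_estimate} and obtains the failure of strong convergence directly from conservation of the linearized Hamiltonian.
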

\begin{proof}
Let us denote $(\zeta^L,\psi^L)$ the solution of the linearized equation \eqref{c4:zetapsi_eq} with initial condition $(\zeta_0,\psi_0)$ given by Theorem \ref{c4:wlocal} on $[0;T]$ (just take $\epsilon=0$ in the equation \eqref{c4:ww_equu} to get the solutions). Using Proposition \ref{c4:existence_linear}, one has $$(\zeta^L,\psi^L)(t) = e^{-\frac{t}{\epsilon}L}(\zeta^0,\psi^0).$$ Using the definition of $e^{tL}$ given by \eqref{c4:def_L}, $\zeta,\B\psi,$ are sum of terms of the form $e^{\frac{it\omega(D)}{\epsilon}}(\zeta^0,\psi^0)$ and therefore one has, using the dispersive estimate of Theorem \ref{c4:dispersive_estimate}:
$$\forall t\in [0;T],\qquad \vert(\zeta^L(t),\B\psi^L(t))\vert \leq (\frac{\epsilon^{1/8}}{\mu^{3/16}}t^{7/8}+\epsilon^{1/2}\mu^{1/4}t^{3/2})(\vert(\zeta^0,\B\psi^0)\vert_{H^1}+\vert x\partial_x\zeta^0,\ \B x\partial_x\psi^0)\vert_{L^2}$$ and therefore one gets the weak convergence of $(\zeta,\B\psi)$ in $L^\infty((0;T);L^2_x(\R))$ as $\epsilon$ goes to zero. The convergence is not strong in $L^\infty((0;T);L^2_x(\R))$ since the quantity $$\frac{1}{2\mu}(\Go\psi,\psi)_2+\frac{1}{2}\vert\zeta\vert_2^2\sim \frac{1}{2}(\vert\B\psi\vert_2^2+\vert\zeta\vert_2^2)$$ is conserved through time. \par\vspace{\baselineskip}

Now, according to Theorem \ref{c4:lintofull}, $(\zeta,\B\psi)-(\zeta^L,\psi^L)$ converges strongly to zero as $\epsilon$ goes to zero in $L^\infty((0;T);L^2_x(\R))$. Therefore, $(\zeta^L,\psi^L)$ converges weakly, but not strongly as $\epsilon$ goes to zero, in this space.$\qquad\Box$
\end{proof}

\section{Equivalence between the free surface Euler and Water-Wave equation}\label{c4:equivalence}

In all this Section, we do not make any assumption on the dimension, and we set the bottom to zero: $$d=1,2,\qquad b=0.$$  We consider the standard dimensionless version of the Water-Waves equations with flat bottom:
\begin{align}\begin{cases}
\ds\partial_t\zeta-\frac{1}{\mu}G[\epsilon\zeta,0]\psi = 0\\
\ds\partial_t\psi+\zeta+\frac{\epsilon}{2}\vert\nablag\psi\vert^2-\frac{\epsilon}{\mu}\frac{(G[\epsilon\zeta,0]\psi+\epsilon\mu\nablag\zeta\cdot\nablag\psi)^2}{2(1+\epsilon^2\mu\vert\nablag\zeta\vert^2}=0.\label{c4:wwequation}
\end{cases}
\end{align}

In Section \ref{c4:rigidlidlimitww}, we studied the rigid lid limit for the Water-Waves equations \eqref{c4:water_waves}. In order to complete the study of the rigid lid equations \eqref{c4:rigidlid1}, we study in this Section the rigid lid limit for the Euler equations \eqref{c4:eulerystem}. To this purpose, we prove that one can build rigorously solutions to the free surface Euler equations from the solutions of the Water-Waves equations. Such problem has been studied for instance in \cite{alazard}. The first problem is to define rigorously a solution to the free surface Euler equations. Because the free surface Euler equations \eqref{c4:eulerystem} are posed on a domain $\Omega_t^\epsilon$ which depends on $\epsilon$ and $t$, we have to be careful with the functional spaces used. The idea is that, by hypothesis, the height $1+\epsilon\zeta(t,X)$ of the water is bounded with respect to $X\in\mathbb{R}^d$ at a fixed time $t$. By continuity in time, we can include $\Omega_t^{\epsilon}$ in a fixed strip $\mathcal{S}^*$, which does not depends either on $t$ nor $\epsilon$.\par

We can then define rigorously what is a solution to the Euler equation \eqref{c4:eulerystem}: it is $(U,P)$ living in a functional space like "continuous in time with value to a Sobolev-type space in $X$ on the fixed strip $\mathcal{S}^*$" such that there exists $T>0$ such that, for all $0\leq t\leq T$, the derivative (in the sense of the distributions of $\mathcal{D}_{X,z,t}'([0,T]\times\mathcal{S}^*)$)  satisfies the equalities of (\ref{c4:eulerystem}). \par

\subsection{Main result}

In order to extract a convergent sub-sequence of solutions to the Water-Waves equations \eqref{c4:wwequation} as $\epsilon$ goes to zero, we need some compactness results. Indeed, a space such as $C([0;T];H^N(\R^d))$ is not compactly embedded into $L^\infty([0;T];H^N(\R^d))$. We need a control for time derivatives of the unknowns. For this reason, the stated local existence Theorem \ref{c4:uniform_result} is not sufficient. We recall here a slightly different version, including time derivatives in the energy space. The following framework is used for instance to prove the local existence for the Water-Waves equations with surface tension in  \cite{david} Chapter 9, and in \cite{benoit} for a long time existence result. We first introduce the following energy:

\begin{definition}\label{c4:defpsiat} Let $N\geq 1$ and $t_0>d/2$. We define \begin{equation}\E^N_1 = \vert\B\psi\vert_{H^{t_0+3/2}}^2+\sum_{\alpha\in\N^{d+1},\vert\alpha\vert \leq N} \vert\zetaa\vert_2^2+\vert\B\psia\vert_2^2\label{c4:energyt}\end{equation} with $$\zetaa = \partial^\alpha\zeta,\qquad \psia = \partial^\alpha\psi-\epsilon\underline{w}\partial^\alpha\zeta$$ and $$\underline{w} = \frac{G\psi+\epsilon\mu\nablag\zeta\cdot\nablag\psi}{1+\epsilon^2\mu\vert\nablag\zeta\vert^2}$$ with the notation:
$$\forall\alpha = (\alpha_1,...,\alpha_d,k)\in\N^{d+1}, \partial^\alpha = \partial_{X_1}^{\alpha_1}...\partial_{X_2}^{\alpha_d}\partial_t^k.$$
\end{definition}

\begin{remark}\begin{itemize}[label=--,itemsep=0pt]
\item It is very important to notice that we consider here also time derivatives in the energy. 
\item We recall that $\underline{w}$ coincides as suggested with the horizontal component of the velocity evaluated at the surface.\end{itemize}
\end{remark}

We now give a slightly different version of the local existence Theorem for the Water-Waves equations \eqref{c4:wwequation} proved by Alvarez-Samaniego and Lannes (see also \cite{david} Chapter 4 and Chapter 9):

\begin{theorem}[Alvarez Samaniego,Lannes]\label{c4:lannes}
Let $t_0>d/2$, $N>t_0+t_0\vee 2+3/2$, and $(\epsilon,\mu,\gamma)$ be such that $$0\leq \epsilon,\mu,\gamma \leq 1.$$ Let $(\zeta_0,\psi_0)$ with $\mathcal{E}_1^N(\zeta_0,\psi_0)<\infty$, where $\E_1^N$ is defined by \eqref{c4:energyt}. We assume that:
$$\exists h_{\min}>0,\exists a_0>0,\qquad 1+\epsilon\zeta_0\geq h_{\min} \text{ and }  \rt(\zeta_0,\psi_0)\geq a_0.$$ Then, there exists $T>0$ and a unique solution $(\zeta,\psi)\in C([0;T];H^{t_0+2}\times \dot{H}^2(\R^d))$ to the Water-Waves equation \eqref{c4:wwequation} such that $\E^N_1(\zeta,\psi) \in L^\infty([0;\frac{T}{\epsilon}])$. Moreover, one has $$\frac{1}{T} = C_1 \quad \text{ and }\quad \underset{t\in[0;\frac{T}{\epsilon}]}{\sup} \E^N_1(t)=C_2$$ with $C_i$ a constant of the form $C_i=C(\mathcal{E}^N(\zeta_0,\psi_0),\frac{1}{h_{\min}},\frac{1}{a_0})$ for $i=1,2$.
\end{theorem}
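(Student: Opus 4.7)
This statement is an enriched version of Theorem \ref{c4:uniform_result}: the functional setting $\E_1^N$ now incorporates mixed space--time derivatives $\partial^\alpha$ with $\alpha\in\N^{d+1}$ of total order $\leq N$, while the equation, the depth assumption and the Rayleigh--Taylor condition are unchanged. My plan is therefore to import existence and the spatial energy control from Theorem \ref{c4:uniform_result}, and then to upgrade the estimate so that it also controls $\partial_t^k U$ for $k\leq N$.

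First I would observe that $\E^N\leq \E_1^N$ term by term, so the hypothesis $\E_1^N(\zeta_0,\psi_0)<\infty$ automatically implies $\E^N(\zeta_0,\psi_0)<\infty$. Together with the hypotheses $1+\epsilon\zeta_0\geq h_{\min}$ and $\rt(U_0)\geq a_0$, Theorem \ref{c4:uniform_result} applies (after undoing its rigid-lid rescaling $t'=\epsilon t$, which converts the time interval of length $T$ there into one of length $T/\epsilon$ for the unscaled equations \eqref{c4:wwequation}) and produces a unique solution of \eqref{c4:wwequation} on $[0;T/\epsilon]$ with $\sup_t \E^N(\zeta,\psi)\leq C(\E^N(U_0),1/h_{\min},1/a_0)$. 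The missing ingredient is the control of the time derivatives, which I would obtain by running the same Alinhac good-unknown energy estimate as in Theorem \ref{c4:uniform_result}, but now for multi-indices $\alpha\in\N^{d+1}$ mixing spatial and temporal derivatives. The crucial algebraic fact is that
\begin{equation*}
\psia = \partial^\alpha\psi - \epsilon\,\underline{w}\,\partial^\alpha\zeta
\end{equation*}
still absorbs the worst shape-derivative contribution $\epsilon\,dG[\epsilon\zeta](\partial^\alpha\zeta)\psi$ of $G[\epsilon\zeta]\psi$, whether $\partial^\alpha$ contains $\partial_t$ or not, because the expression of the shape derivative $dG[\epsilon\zeta](h)\psi$ is the same in both cases. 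Initial values of the time derivatives, needed to start the estimate, are recovered by solving the equations iteratively: $\partial_t\zeta=\frac{1}{\mu}G[\epsilon\zeta]\psi$ and $\partial_t\psi=-\zeta-\frac{\epsilon}{2}|\nablag\psi|^2+\ldots$ are re-inserted into themselves, which expresses $\partial_t^k U(0)$ as a polynomial in spatial derivatives of $U_0$ with coefficients built from shape derivatives of $G$, the latter being controlled by Proposition \ref{c4:asymptotic_G} and Appendix \ref{c4:appendixA}.

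Carrying out the energy estimate for the mixed multi-indices yields a differential inequality of the form $\frac{d}{dt}\E_1^N\leq \epsilon\,C(\E_1^N,1/h_{\min},1/a_0)$; Gronwall's lemma integrated over $[0;T/\epsilon]$ then produces a bound on $\E_1^N$ uniform in $\epsilon$ after possibly shrinking $T$, which is exactly the assertion of the theorem. The main obstacle is the careful book-keeping of the commutators of $\partial^\alpha$ with $G[\epsilon\zeta]$ and with $\underline{w}$: by Leibniz one produces shape derivatives $d^jG[\epsilon\zeta](\partial^{\alpha_1}\zeta,\ldots,\partial^{\alpha_{j-1}}\zeta)\partial^{\alpha_j}\psi$ in which some of the $\partial^{\alpha_i}$ may be time derivatives, and one has to check that each such term can be absorbed into a $C(\E_1^N)$-type factor — this is precisely the reason for defining the good unknown with the full multi-index $\alpha\in\N^{d+1}$ rather than only with spatial ones. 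A secondary, more routine issue is propagating the Rayleigh--Taylor condition $\rt\geq a_0/2$ on the full interval $[0;T/\epsilon]$: since $\rt$ involves $\partial_t\underline{w}$ and $\underline{V}\cdot\nablag \underline{w}$, its time-continuity is naturally read on the $\E_1^N$-energy itself, so a standard bootstrap (shrinking $T$ if necessary) closes the argument in parallel with the energy estimate.
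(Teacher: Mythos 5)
Your outline is correct and coincides with the approach the paper relies on: Theorem \ref{c4:lannes} is not proved in the paper but cited from Alvarez-Samaniego--Lannes and \cite{david} (Chapters 4 and 9), and your strategy --- extend the Alinhac good unknowns to multi-indices $\alpha\in\N^{d+1}$, recover $\partial_t^k U(0)$ by iterating the equations, close a Gronwall estimate of the form $\frac{d}{dt}\E_1^N\leq \epsilon\,C(\E_1^N)$ on $[0;\frac{T}{\epsilon}]$, and propagate the Rayleigh--Taylor condition by bootstrap --- is exactly the one used there. The only imprecision is your citation of Proposition \ref{c4:asymptotic_G} (the asymptotic expansion of $G$ in $\epsilon$) for bounding the shape-derivative commutator terms; the relevant estimates are Propositions \ref{c4:328} and \ref{c4:328b} of Appendix \ref{c4:appendixA}.
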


We use from now on the notations $H^{s,k}$ for the Beppo-Levy spaces, and $\Sigma_t^\epsilon$ for the diffeomorphism from the flat strip to the water domain (see Section \ref{c4:notations} for notations). We also recall that  $\Omega_t^\epsilon$ denote the domain occupied by the water at time $t$. We now state the main result of this Section:

\begin{theorem}\label{c4:equivasol}
Under the hypothesis of Theorem \ref{c4:lannes}, let $T>0$ be given by Theorem \ref{c4:lannes}. Then, for all $\epsilon >0$, there exists a flat strip $\mathcal{S}^*=\R^d\times (-(k+1);k)$ with $k\geq 1$ and $(U^\epsilon,\zeta^\epsilon,P^\epsilon)$ where $U^\epsilon = (V^\epsilon,w^\epsilon)$ in the following spaces: 
\begin{align}
\begin{cases}
\forall k\leq N,\quad\partial_t^k U^{\epsilon}\in L^{\infty}((0;\frac{T}{\epsilon});H^{N-k-1/2,N-k-1}(\mathcal{S}^*)^{d+1}) \\
\forall k\leq N,\quad \partial_t^k\zeta^{\epsilon}\in L^{\infty}([0;\frac{T}{\epsilon}];H^{N-k}(\mathbb{R}^d)) \\
\forall k\leq N, \quad \partial_t^k P^{\epsilon}\in L^{\infty}((0;\frac{T}{\epsilon});H^{N-k-1/2,N-k-1}(\mathcal{S}^*))
\end{cases}
\end{align}
with bounds uniform in $\epsilon$ in these spaces, such that for all $t\in [0;\frac{T}{\epsilon}]$, for all $\epsilon>0$, the domain $\Omega_t^\epsilon$ is included in $\mathcal{S}^*$. Moreover, $(V^\epsilon,w^\epsilon,\zeta^\epsilon,P^\epsilon)$  satisfy in the sense of the distributions of $\mathcal{D}_{X,z,t}'([0,\frac{T}{\epsilon}]\times\mathcal{S}^*)$ the Euler equations \eqref{c4:eulerystem}.
\end{theorem}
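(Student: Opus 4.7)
The overall strategy is to construct the Eulerian unknowns $(U^\epsilon,\zeta^\epsilon,P^\epsilon)$ from the Craig-Sulem-Zakharov solution $(\zeta,\psi)$ given by Theorem \ref{c4:lannes}, first inside the moving fluid domain $\Omega_t^\epsilon$, and then extend them to a fixed strip $\mathcal{S}^*$ that contains every $\Omega_t^\epsilon$ uniformly in $t\in[0;T/\epsilon]$ and in $\epsilon\in(0,1]$.

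The first step is to set $\zeta^\epsilon=\zeta$ and to solve the Dirichlet-Neumann Laplace problem \eqref{c4:dirichletneumannnondim} in $\Omega_t^\epsilon$ in order to obtain a velocity potential $\Phi$, then to define $U^\epsilon=(V^\epsilon,w^\epsilon)$ as the appropriately rescaled gradient of $\Phi$, and $P^\epsilon$ via the Bernoulli-type relation obtained by integrating the momentum equations along the irrotational ansatz (essentially $P^\epsilon=-\partial_t\Phi-\frac{\epsilon}{2}|\nablag\Phi|^2-\frac{\epsilon}{2\mu}(\partial_z\Phi)^2-\epsilon z + C(t)$ up to normalisation conventions). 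A direct computation then shows that the divergence- and curl-free constraints follow from $\Delta^{\mu,\gamma}\Phi=0$, the no-flux condition at $z=-1$ from the Neumann condition on $\Phi$, the kinematic surface equation from the first line of \eqref{c4:wwequation}, and that the second line of \eqref{c4:wwequation} together with an appropriate choice of $C(t)$ yields the surface pressure condition $P^\epsilon=\epsilon\zeta$ at $z=\epsilon\zeta$.

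The regularity step is carried out on the flat strip $\mathcal{S}=\R^d\times(-1,0)$ using the diffeomorphism $\Sigma_t^\epsilon$ of \eqref{c4:diffeoS}: the pull-back $\phi=\Phi\circ\Sigma_t^\epsilon$ solves the variable-coefficient elliptic problem \eqref{c4:dirichletneumann}, for which elliptic estimates in the anisotropic spaces $H^{s,k}(\mathcal{S})$ are available from \cite{david}. Spatial control of $\phi$ in terms of $\B\psi$ and $\zeta$ is therefore immediate. To incorporate time derivatives, I differentiate the pulled-back equation $k$ times in $t$: each $\partial_t^k\phi$ solves an elliptic problem of the same form with Dirichlet data $\partial_t^k\psi$ and a source term polynomial in $\partial_t^j\zeta$ and $\partial_t^j\phi$ for $j<k$. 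Inductively on $k$, using that the energy $\E^N_1$ of Definition \ref{c4:defpsiat} was built precisely to control $\partial_t^j\zeta$ and $\partial_t^j\psi$ in the required Sobolev norms, I obtain uniform $H^{N-k-1/2,N-k-1}$ bounds on $\partial_t^k\Phi$, and hence on $\partial_t^k U^\epsilon$; the corresponding bounds for $\partial_t^k P^\epsilon$ then follow from the Bernoulli formula.

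For the extension, since $|\zeta|_{L^\infty}$ is controlled by $\E^N_1$ uniformly in $\epsilon$ and $t$, I choose $k\geq 1$ large enough that $\Omega_t^\epsilon\subset\R^d\times(-1,k)\subset\mathcal{S}^*=\R^d\times(-(k+1),k)$ for every $t\in[0;T/\epsilon]$ and every $\epsilon\in(0,1]$, and extend $(U^\epsilon,P^\epsilon)$ to $\mathcal{S}^*$ by pulling back to $\mathcal{S}$, applying a standard bounded Stein-type extension from $\mathcal{S}$ to the larger flat strip $\R^d\times(-(k+1),k)$, and pushing forward via a suitable extension of $\Sigma_t^\epsilon$ to a diffeomorphism of the two larger strips that is the identity near the outer boundary $z=k$. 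Because the equations in \eqref{c4:eulerystem} are only required to hold inside $\Omega_t^\epsilon$, this modification outside of the fluid does not interfere with their distributional validity on $[0;T/\epsilon]\times\mathcal{S}^*$. I expect the main obstacle to be the regularity step: propagating an arbitrary number of time derivatives through the shape-dependent Laplace problem while keeping all constants uniform in $\epsilon$ and in $1/h_{\min}$ hinges on the fact that $\E^N_1$ was designed precisely to close such estimates via the shape-derivative machinery for the Dirichlet-Neumann operator recalled in Appendix \ref{c4:appendixA}.
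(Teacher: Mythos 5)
Your proposal is correct and follows essentially the same three-step architecture as the paper: construct $(U^\epsilon,P^\epsilon)$ from the Zakharov--Craig--Sulem solution via the Dirichlet--Neumann potential and a Bernoulli relation, propagate time derivatives of $\phi$ on the flat strip using the bounds encoded in $\E^N_1$, and extend to a fixed strip $\mathcal{S}^*$ by a higher-order reflection in $z$ composed with the (globally defined) diffeomorphism $\Sigma_t^\epsilon$. The only presentational difference is that where you differentiate the pulled-back elliptic equation and induct on inhomogeneous elliptic estimates, the paper instead applies the chain rule to $\phi=\mathfrak{A}_\psi(\zeta,0)$ and invokes the shape-derivative estimates of Propositions \ref{c4:a7} and \ref{c4:a8} — two formulations of the same computation.
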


The proof follows these steps: \begin{itemize}[label=--,itemsep=0pt]
\item We claim some regularity for the velocity potential $\phi$ solving \eqref{c4:dirichletneumann} on the fixed strip $\mathcal{S}=\R^d\times (-1;0)$.
\item We extend $\phi$ to a larger strip than $\mathcal{S}$.
\item We recover regularity for the velocity potential $\Phi$ defined originally on the water domain by $\phi=\Phi\circ\Sigma_t^\epsilon$, after defining it on a large strip containing all the fluid domain  $\Omega_t^\epsilon$ for all $t,\epsilon$.\end{itemize}

\subsection{Regularity of the solutions of the Water Waves problem}
According to Theorem \ref{c4:lannes}, for all $\epsilon$, there exists a unique solution $(\zeta^{\epsilon},\psi^{\epsilon})$ to \eqref{c4:wwequation} on a time interval $[0,\frac{T}{\epsilon}]$, with $T$ only depending on the initial energy (and not on $\epsilon$), with the following control: 
\begin{equation}
\forall t\in [0;\frac{T}{\epsilon}], \qquad \mathcal{E}_1^N(U^{\epsilon})(t) \leq C_2\label{c4:t0_estimate}
\end{equation}
with $C_2$ a constant of the form $C(\mathcal{E}_1^N(\zeta_0,\psi_0),\frac{1}{h_{\min}},\frac{1}{a_0})$, where $\E_1^N$ is defined by \eqref{c4:energyt}. We can then prove the following regularity result:
\begin{proposition}
Let $(\zeta^{\epsilon},\psi^{\epsilon})$ be the solution of \ref{c4:wwequation} given by Theorem \ref{c4:lannes}, and $T>0$ such that (\ref{c4:t0_estimate}) is satisfied. Then, the following regularity results stand:
\begin{enumerate}[label=\arabic*),itemsep=0pt]
\item For all $0\leq k\leq N$, one has: $$ \partial_t^k\zeta^{\epsilon}\in L^{\infty}([0;\frac{T}{\epsilon}];H^{N-k}(\mathbb{R}^d))\quad\text{ and }\quad \vert \partial_t^k \zeta^{\epsilon}\vert_{L^{\infty}([0;\frac{T}{\epsilon}];H^{N-k}(\mathbb{R}^d))} \leq C_2;$$
\item For all $0\leq k \leq N-1$, one has: $$\partial_t^k\mathfrak{P}\psi^{\epsilon} \in L^{\infty}([0;\frac{T}{\epsilon}];H^{N-k-1/2}(\mathbb{R}^d))\quad\text{ and }\quad \vert \partial_t^k \mathfrak{P}\psi^{\epsilon}\vert_{L^{\infty}([0;\frac{T}{\epsilon}];H^{N-k-1/2}(\mathbb{R}^d))} \leq C_2,$$
\end{enumerate}
with $C_2$ a constant of the form $C(\mathcal{E}^N(\zeta_0,\psi_0),\frac{1}{h_{\min}},\frac{1}{a_0})$. \label{c4:propregularity}
\label{c4:controlt}\end{proposition}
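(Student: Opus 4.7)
Statement (1) is immediate from the definition of $\mathcal{E}_1^N$: since the multi-indices in \eqref{c4:energyt} are already of mixed space-time type $\alpha=(\beta,k)\in\N^{d+1}$, fixing $k\leq N$ and summing $|\zeta_{(\beta,k)}|_2^2=|\partial_X^\beta\partial_t^k\zeta|_2^2$ over $|\beta|\leq N-k$ gives $|\partial_t^k\zeta|_{H^{N-k}(\R^d)}^2\leq\mathcal{E}_1^N(\zeta,\psi)\leq C_2^2$ uniformly in $t\in[0;T/\epsilon]$. For statement (2) I plan to invert the Alinhac change of unknown $\psia=\partial^\alpha\psi-\epsilon\wsurf\,\partial^\alpha\zeta$ in order to convert the energy bounds on $\B\psia$ into bounds on the genuine derivatives of $\B\psi$, at the price of the half-derivative lost by the order-$1/2$ operator $\B$ (which is precisely why the target space is $H^{N-k-1/2}$ rather than $H^{N-k}$).

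\textbf{Key identity and estimate in $H^{-1/2}$.} The definition of $\psia$ in Definition \ref{c4:defpsiat} yields
\[\B\partial^\alpha\psi=\B\psia+\B(\epsilon\wsurf\,\partial^\alpha\zeta),\qquad\alpha=(\beta,k)\in\N^{d+1},\ |\alpha|\leq N.\]
The first term is bounded in $L^2$ by $(\mathcal{E}_1^N)^{1/2}\leq C_2^{1/2}$. Since $\B$ is a Fourier multiplier of order $1/2$ and thus bounded $L^2\to H^{-1/2}$, the second term satisfies
\[|\B(\epsilon\wsurf\,\partial^\alpha\zeta)|_{H^{-1/2}}\lesssim\epsilon\,|\wsurf|_\infty\,|\partial^\alpha\zeta|_2\leq C_2,\]
where $|\partial^\alpha\zeta|_2$ is controlled by statement (1) and $|\wsurf|_\infty\leq C_2$ follows from the explicit formula for $\wsurf$ in Definition \ref{c4:defpsiat}, tame estimates on $G[\epsilon\zeta]\psi$ (see \cite{david}, Chapter 3) and the Sobolev embedding $H^{t_0+1}\hookrightarrow L^\infty$. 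I therefore arrive at $|\B\partial^\alpha\psi|_{H^{-1/2}}\leq C_2$ for every $|\alpha|\leq N$.

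\textbf{Passage to $H^{N-k-1/2}$.} Splitting the Fourier symbol as $(1+|\xi|^2)^{N-k-1/2}=(1+|\xi|^2)^{-1/2}(1+|\xi|^2)^{N-k}$ and using the standard equivalence $(1+|\xi|^2)^{N-k}\sim\sum_{|\beta|\leq N-k}|\xi^\beta|^2$, I obtain by Plancherel
\[|\partial_t^k\B\psi|_{H^{N-k-1/2}}^2\sim\sum_{|\beta|\leq N-k}|\B\partial_X^\beta\partial_t^k\psi|_{H^{-1/2}}^2.\]
For $k\leq N-1$, every multi-index $\alpha=(\beta,k)$ on the right-hand side satisfies $|\alpha|=|\beta|+k\leq N$, so the previous step bounds each summand by $C_2^2$ and the claimed estimate follows.

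\textbf{Main obstacle.} The delicate point is the top-order case $|\alpha|=N$ of the key identity: a naive $L^2$ estimate of $\B(\epsilon\wsurf\,\partial^\alpha\zeta)$ would demand $\partial^\alpha\zeta\in H^{1/2}$, precisely the half-derivative that is not controlled at the top of the energy ladder. Working in $H^{-1/2}$ rather than $L^2$ absorbs this loss, and the matching of the half derivative saved by dropping to $H^{-1/2}$ with the order of $\B$ is what forces the fractional index $N-k-1/2$ in the conclusion. A secondary technicality is the uniform-in-$\epsilon$ $L^\infty$ control of $\wsurf$, which is handled by the Dirichlet--Neumann tame estimates recalled in Section \ref{c4:notations}.
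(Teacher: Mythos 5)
Your proof is correct and rests on the same two pillars as the paper's: statement (1) is read off directly from the presence of time derivatives in $\mathcal{E}_1^N$, and statement (2) is obtained by inverting the Alinhac substitution $\psia=\partial^\alpha\psi-\epsilon\wsurf\partial^\alpha\zeta$ and exploiting the order-$1/2$ character of $\B$ together with a tame bound on $\wsurf$. Where you differ is in the bookkeeping of the half derivative. The paper expands $\vert\B\partial_t^k\psi\vert_{H^{N-k-1/2}}$ over spatial multi-indices $\vert\beta\vert\leq N-1-k$ measured in $H^{1/2}$, then uses $\vert\B f\vert_{H^{1/2}}\leq\max\{1,\mu^{-1/4}\}\vert\nabla f\vert_2$, which forces a second expansion $\beta\mapsto\beta+\delta$ with $\vert\delta\vert=1$ to reach the top of the energy ladder, plus the bound $\vert\B f\vert_2\leq C\mu^{-1/4}\vert f\vert_{H^{1/2}}$ on the commutator term. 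You instead go \emph{down} to $H^{-1/2}$ over the full range $\vert\beta\vert\leq N-k$, so that the top-order indices $\vert\alpha\vert=N$ are handled in one pass by the $L^2\to H^{-1/2}$ boundedness of $\B$; this is arguably cleaner and avoids the two-stage expansion. One point you should make explicit, because the constant $C_2$ must not depend on $\mu$: the operator norm of $\B$ from $L^2$ to $H^{-1/2}$ is not uniform in $\mu$ — the symbol $\vert\xi^\gamma\vert(1+\vert\xi\vert^2)^{-1/4}(1+\sqrt{\mu}\vert\xi^\gamma\vert)^{-1/2}$ behaves like $\mu^{-1/4}$ at high frequency — so your implicit constant in $\vert\B(\epsilon\wsurf\partial^\alpha\zeta)\vert_{H^{-1/2}}\lesssim\epsilon\vert\wsurf\vert_\infty\vert\partial^\alpha\zeta\vert_2$ is $O(\mu^{-1/4})$. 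This is harmless only because Proposition \ref{c4:314} gives $\vert\wsurf\vert_{H^{t_0+1}}\leq\mu^{3/4}M\vert\B\psi\vert_{H^{t_0+3/2}}$, so the product is $O(\mu^{1/2})\leq 1$; the paper performs exactly this cancellation, and your argument should record it rather than absorb it into a $\lesssim$.
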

\begin{proof}

Recall that the energy is: 
\begin{equation*}\mathcal{E}^N(U) = \modd{\mathfrak{P}\psi}_{H^{t_0+3/2}}+\underset{\alpha\in\mathbb{N}^{d+1},\mid\alpha\mid\leq N}{\Sigma} \modd{\zetaa}_2+\modd{\mathfrak{P} \psia}_2.\end{equation*}

1) Since the time derivatives of the unknowns appear in the energy, the control given by \eqref{c4:t0_estimate} implies that for all $0\leq k\leq N$, $\partial^k_t \zeta^{\epsilon}$ is in $L_t^\infty(H^{N-k}(\mathbb{R}^d))$, with the desired estimate. \par\vspace{\baselineskip}

2) Let us fix $k$ such that $0\leq k\leq N-1$. We have to get a control of $\mathfrak{P}\dt^k\partial^{\alpha}\psi$ by $\mathfrak{P}\psi_{(\alpha,k)}$. Adapting a proof from Lemma 4.6 in \cite{david}, one computes:
\begin{align*}
\vert\mathfrak{P}\partial_t^k\psi\vert_{H^{N-k-1/2}} &\leq \sum_{\beta\in\mathbb{N}^{d},\vert\beta\vert\leq N-1-k} \vert\mathfrak{P}\partial_t^k\partial^{\beta}\psi\vert_{H^{1/2}} \\
&\leq  \sum_{\beta\in\mathbb{N}^{d},\vert\beta\vert\leq N-1-k} (\vert\mathfrak{P}\psi_{(\beta,k)}\vert_{H^{1/2}}+\epsilon\vert\mathfrak{P}(\underline{w}\partial_t^k\partial^\beta\zeta)\vert_{H^{1/2}}) \end{align*} where we used the definition of $\psi_{(\beta,k)}$ given by Definition \ref{c4:defpsiat}. We now use the identity  $\vert\mathfrak{P} f\vert_{H^{1/2}} \leq \max\lbrace 1,\mu^{-1/4}\rbrace\vert\nabla f\vert_2$ to conclude:
\begin{align*}
\vert\mathfrak{P}\partial_t^k\psi\vert_{H^{N-k-1/2}} \leq  \sum_{\beta\in\mathbb{N}^{d},\vert\beta\vert\leq N-1-k} (\vert\mathfrak{P}\psi_{(\beta,k)}\vert_{H^{1}}+\epsilon \max\lbrace1,\mu^{-1/4}\rbrace\vert\underline{w}\vert_{H^{t_0+1}}\vert\partial_t^k\zeta\vert_{H^{N-k}}).
\end{align*}
One already has by the first point that $\vert\partial_t^k\zeta\vert_{H^{N-k}} \leq C_2$. Using Proposition \ref{c4:314}, we have $\vert\underline{w}\vert_{H^{t_0+1}}\leq \mu^{3/4} M \vert\B\psi\vert_{H^{t_0+3/2}}\leq C_2$. We now control  $\psi_{(\beta,k)}$ in $H^1$ norm, for all $\vert\beta\vert\leq N-1-k$. One computes:
\begin{align*}
\vert\B\psi_{(\beta,k)}\vert_{H^1} &\leq \vert\B\psi_{(\beta,k)}\vert_2+\sum_{\delta\in\N^d, \vert\delta\vert=1} \vert\partial^\beta\B\psi_{(\beta,k)}\vert_2\\
&\leq \vert\B\psi_{(\beta,k)}\vert_2 + \sum_{\delta\in\N^d, \vert\delta\vert=1} (\vert\B\psi_{(\beta+\delta,k)}\vert_2+\epsilon\vert \B(\partial^\delta\underline{w}\dt^k\partial^\beta\zeta)\vert_2).
\end{align*}
Since $\vert \beta+\delta\vert+k \leq N$, the first component of the right hand side from above is controlled by a constant of the form $C_2$. For the second component, we use the identity: $$\exists C>0,\qquad \forall f\in\mathcal{S}(\R^d),\qquad \vert\B f\vert\leq C \mu^{-1/4}\vert f\vert_{H^{1/2}}$$ where $C$ does not depend on $\mu$. Therefore, $$\vert \B(\partial^\delta\underline{w}\partial^\beta\zeta)\vert_2 \leq C\mu^{-1/4} \vert \partial^\delta\underline{w}\partial^\beta\zeta\vert_{H^{1/2}} \leq C\mu^{-1/4}\vert\underline{w}\vert_{H^{t_0+1}}\vert \dt^k\zeta\vert_{H^{N-k}}. $$ We now use as before Proposition \ref{c4:314} to control $\mu^{-1/4}\vert\underline{w}\vert_{H^{t_0+1}}$ by $C_2$ and we remark that $\vert\B\psi_{(\beta+\delta,k)}\vert_2\leq C_2$ for all $\vert\beta+\delta\vert+k\leq N$. We obtained the control: $$\vert\mathfrak{P}\partial_t^k\psi\vert_{H^{N-k-1/2}} \leq C_2$$ with $C_2$ independent on the time. Taking the supremum on time over $[0;\frac{T}{\epsilon}]$, one gets the desired result.

 $\qquad \Box $ 
\end{proof}

In order to define a velocity $U=\nablamug\Phi$ on the fluid domain $\Omega_t^\epsilon$, where $\Phi$ solves the Dirichlet-Neumann problem \eqref{c4:dirichletneumannnondim} on the fluid domain, we first study the regularity of $\phi$ where $\phi$ is the solution of the Dirichlet-Neumann problem \eqref{c4:dirichletneumann} on the flat strip $\S = \R^d\times (-1;0)$. We recall the relation $\phi = \Phi\circ \Sigma_t^\epsilon$, where $\Sigma_t^\epsilon$ is the diffeomorphism defined by \eqref{c4:diffeoS}.

\begin{proposition}
Let $\phi$ be the unique variational solution to $\eqref{c4:dirichletneumann}$. We have, for all $0\leq k\leq N-1$: $$\partial_t^k\nabla^{\mu,\gamma}\phi\in L^{\infty}((0,\frac{T}{\epsilon});H^{N-k-1/2,N-k-1}(\mathcal{S}))$$ with $$\vert\partial_t^k\nabla^{\mu,\gamma}\phi\vert_{ L^{\infty}((0,\frac{T}{\epsilon});H^{N-k-1/2,N-k-1}(\mathcal{S}))}\leq C_2$$ where $C_2$ is a constant of the form $C(\mathcal{E}^N(\zeta_0,\psi_0),\frac{1}{h_{\min}},\frac{1}{a_0})$.
\end{proposition}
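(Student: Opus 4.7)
The plan is to reduce the question to an elliptic regularity statement on the fixed strip $\S$, and to use the uniform-in-$\epsilon$ bounds on time derivatives of $(\zeta,\psi)$ just obtained in Proposition \ref{c4:propregularity}. At $k=0$, $\phi$ solves the variable-coefficient elliptic problem
\[
\nabla^{\mu,\gamma}\cdot P(\Sigma_t^\epsilon)\nabla^{\mu,\gamma}\phi=0\text{ in }\S,\qquad \phi_{|z=0}=\psi,\qquad \partial_z\phi_{|z=-1}=0,
\]
whose coefficients are controlled, thanks to the Lipschitz bound on $\zeta$ together with the lower bound $1+\epsilon\zeta\geq h_{\min}$, by a constant of the form $C_2$. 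The $H^{N-1/2,N-1}(\S)$ estimate on $\nabla^{\mu,\gamma}\phi$ is then provided by the anisotropic elliptic regularity theorem in Chapter 2 of \cite{david} applied iteratively: at each step one gains a horizontal derivative using that $\zeta\in H^N(\R^d)$ and a vertical derivative by rewriting $\partial_z^2\phi$ in terms of horizontal derivatives via the equation, trading vertical regularity for horizontal regularity at the price of one factor of $\vert\nabla^{\mu,\gamma}\phi\vert_{H^{\cdot,\cdot}}$ and coefficients depending on $\vert\zeta\vert_{H^{t_0+1}}$. The Dirichlet data $\psi$ is controlled through $\B\psi\in H^{N-1/2}$ by Proposition \ref{c4:propregularity}, which is the right scale since $\B$ is of order $1/2$.

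For $k\geq 1$, I would argue by induction on $k$. Differentiating the equation $k$ times in time yields
\[
\nabla^{\mu,\gamma}\cdot P(\Sigma_t^\epsilon)\nabla^{\mu,\gamma}\partial_t^k\phi=-\sum_{j=1}^{k}\binom{k}{j}\nabla^{\mu,\gamma}\cdot\big(\partial_t^j P(\Sigma_t^\epsilon)\big)\nabla^{\mu,\gamma}\partial_t^{k-j}\phi,
\]
supplemented by $\partial_t^k\phi_{|z=0}=\partial_t^k\psi$ and $\partial_z\partial_t^k\phi_{|z=-1}=0$ (which is preserved in time because the bottom is flat and $\Sigma_t^\epsilon$ is the identity there). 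Since $\partial_t^j P(\Sigma_t^\epsilon)$ is a smooth algebraic expression in $\partial_t^i\zeta$ for $i\leq j$, the time-differentiated coefficients are controlled in $L^\infty_t H^{N-j}$ by Proposition \ref{c4:propregularity}, uniformly in $\epsilon$. I would then invoke the non-homogeneous version of the same anisotropic elliptic regularity result: if the source lies in $H^{N-k-3/2,N-k-2}(\S)$ and the Dirichlet data in $H^{N-k-1/2}(\R^d)$, then $\nabla^{\mu,\gamma}\partial_t^k\phi\in H^{N-k-1/2,N-k-1}(\S)$, with constants controlled by the $H^{t_0+1}$ norm of $\zeta$, hence by $C_2$.

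To close the induction, one must check that the source term above lies in the required anisotropic space. This is the main technical obstacle: it requires tame product estimates in the $H^{s,k}$ spaces applied to the commutator sum, combined with the inductive control $\vert\nabla^{\mu,\gamma}\partial_t^{k-j}\phi\vert_{H^{N-k+j-1/2,N-k+j-1}}\leq C_2$ and the coefficient bound $\vert\partial_t^j P(\Sigma_t^\epsilon)\vert_{H^{N-j}}\leq C_2$. Standard Moser-type estimates in these Beppo-Levi-like spaces (again available in Chapter 2 of \cite{david}) give exactly the balance of indices needed: each derivative taken off $\nabla^{\mu,\gamma}\partial_t^{k-j}\phi$ is absorbed by the extra regularity available on $\partial_t^j P$, and the $t_0>d/2$ assumption ensures that the low-regularity factor always sits in an algebra. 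The Dirichlet data at each step is $\partial_t^k\psi$, whose gradient is controlled through $\B\partial_t^k\psi\in L^\infty_t H^{N-k-1/2}$ by Proposition \ref{c4:propregularity}, matching the regularity required by the elliptic estimate. Taking supremum in $t\in[0;T/\epsilon]$ yields the claimed bound.
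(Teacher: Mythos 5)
Your route differs from the paper's in presentation: the paper does not differentiate the transformed elliptic equation and run a commutator argument; it writes $\phi=\mathfrak{A}_\psi(\zeta,0)$ and expands $\partial_t^k\phi$ by the chain rule into shape derivatives $d^j\mathfrak{A}_{\partial_t^{k-j}\psi}(\zeta,0)(\partial_t^{l_1}\zeta,\dots,\partial_t^{l_j}\zeta)$, then applies the pre-packaged estimates of Propositions \ref{c4:a7} and \ref{c4:a8} together with Proposition \ref{c4:propregularity}. Your approach is essentially a re-derivation of what those two propositions encapsulate, so it is viable in outline, but it stumbles exactly at the point those two propositions are designed to handle.

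The gap is in your treatment of the source term when many time derivatives fall on $\zeta$. You assert that the coefficients $\partial_t^jP(\Sigma_t^\epsilon)$ are controlled in $L^\infty_tH^{N-j}$ and that ``the $t_0>d/2$ assumption ensures that the low-regularity factor always sits in an algebra.'' This is false for large $j$: the worst factor is $\partial_t^{l}\zeta\in H^{N-l}$ with $l$ possibly as large as $k\leq N-1$, and $H^{N-l}$ drops below $H^{t_0+1}$ (indeed below the algebra threshold $H^{d/2}$ when $d=2$ and $l=N-1$). In that regime the standard tame product estimate, which needs one factor in $H^{t_0}$, is simply unavailable, and no balance of indices of the type you describe closes the estimate. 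This is precisely why the paper splits into the two cases $l_{\max}\leq N-t_0-1$ (all increments in $H^{t_0+1}$, Proposition \ref{c4:a7} applies) and $l_{\max}>N-t_0-1$ (only one increment can be this rough; Proposition \ref{c4:a8} measures it in the weak norm $H^{s+1/2}$ and compensates by measuring $\psi$ only in the fixed norm $H^{t_0+1/2}$ and the remaining increments in $H^{t_0+1}$). Your induction needs an analogous dichotomy — an elliptic estimate in which the roughest coefficient is placed in a low norm while the solution and data absorb the regularity — and as written it does not contain one. The rest of your argument (boundary conditions for $\partial_t^k\phi$, control of the Dirichlet data via $\B\partial_t^k\psi\in L^\infty_tH^{N-k-1/2}$ from Proposition \ref{c4:propregularity}) is consistent with the paper.
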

This notation is a bit heavy, but we have to deal with the fact that we only use integer derivatives in $z$. 
\begin{proof}

  Using the notations of the Proposition \ref{c4:shapeanalicity}, we recall that:
\begin{equation*}
\phi = \mathfrak{A}_{\psi}(\zeta,0).
\end{equation*}
By taking the derivative in time, $\partial_t^k \phi$ is sum of terms of the form:
$$  d^j\mathfrak{A}_{\partial_t^{k-j}\psi}(\zeta,0)(\partial_t^{l_1}\zeta,...,\partial_t^{l_j}\zeta)$$
with $0\leq j\leq k$ and $l_1+...+l_j = j$. Let $l_{\max} = \underset{1\leq m\leq j}{\max} l_m$. We assume $l_{\max} = l_1$ by symmetry. We then have two cases: \par

1) Case $l_1 \leq N-t_0-1$. By using Proposition \ref{c4:a7}, with $t_0' = N-l_1-1$ (note that $t_0'\geq t_0 >d/2$) and $s\leq N-k-1/2$ we have:
\begin{align*}\vert \Lambda^s \nabla^{\mu,\gamma} d^j\mathfrak{A}_{\partial_t^{k-j}\psi}(\zeta,0)(\partial_t^{l_1}\zeta,...,\partial_t^{l_j}\zeta)\vert_2 &\leq \sqrt{\mu}M_0\prod_{m=1}^j \vert \epsilon \partial_t^{l_m}\zeta \vert_{H^{N-l_1}}\vert \mathfrak{P}\partial_t^{k-j}\psi\vert_{H^s}\\
&\leq C_2
\end{align*}
where we used the $H^{N-k-1/2}$ regularity of $\mathfrak{P}\partial_t^{k-j}\psi$ given by Proposition \ref{c4:propregularity}.\par\vspace{\baselineskip}

2) Case $l_1 \geq N-t_0-1$. Let us use Proposition \ref{c4:a8}, with $t_0'=N-1-(k-l_1)$ (just note that $t_0' \geq 3/2 >d/2$ for $ d=1,2$). We have, for all $s\leq N-k-1/2$ (we have $N-k-1/2 \geq t_0'$):
\begin{align*}\vert\Lambda^s\nabla^{\mu,\gamma} d^j\mathfrak{A}_{\partial_t^{k-j}\psi}(\zeta,0)(\partial^{l_1}\zeta,...,\partial^{l_j}\zeta)\vert_2 &\leq \sqrt{\mu}M_0\vert\epsilon\partial_t^{l_1}\zeta\vert_{H^{s+1/2}}\prod_{m>1} \vert\epsilon\partial_t^{l_m}\zeta\vert_{H^{t_0'+1}}\vert\mathfrak{P}\partial_t^{k-j}\psi\vert_{H^{t_0'+1/2}} \\ &\leq C_2
\end{align*}
because $t_0'+1/2+k-j\leq N-1/2$, and where we used again the Proposition \ref{c4:propregularity} for the  $H^{N-k-1/2}$ regularity of $\mathfrak{P}\partial_t^{k-j}\psi$.\par\vspace{\baselineskip}

We proved that $\partial_t^k\nabla^{\mu,\gamma}\phi\in L^{\infty}((0,\frac{T}{\epsilon};H^{N-k-1/2})$. Using the last part of Proposition \ref{c4:a7} and Proposition \ref{c4:a8}, we can adapt the previous proof to get $$\partial_t^k\nabla^{\mu,\gamma}\phi\in L^{\infty}((0,\frac{T}{\epsilon});H^{N-k-1/2,N-k-1}(\mathcal{S}))$$ with the desired control for the norm.$\qquad \Box $ 
\end{proof}

\subsection{Extension of the solution $\phi$}
We want to extend the distribution $\phi$ defined on the strip $\S=\R^d\times(-1;0)$ to a larger strip. This is the point of the following result:

\begin{theorem}
Let $s\in\R^+$ and $k\in\N$. Let us denote $\mathcal{S}_j = (-(j+1),j)\times\mathbb{R}^d$ for all $j\in\mathbb{N}$. Then, there exists an extension 

\begin{displaymath}
P :
\left.
  \begin{array}{rcl}
    H^{s,k}(\mathcal{S}_0) &\rightarrow &H^{s,k}(\mathcal{S}_j)  \\
    u&\mapsto &Pu \\
  \end{array}
\right.
\end{displaymath}
such that: 
\begin{equation}
\forall u\in H^{s,k}(S_0), \qquad\vert u\vert_{H^{s,k}(S_j)} \leq  C(k,j)\vert u\vert_{H^{s,k}(S_0)} \label{c4:prolongementth}
\end{equation}
where $C(k,j)$ only depends on $k$ and $j$. \label{c4:extension}
\end{theorem}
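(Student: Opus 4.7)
The plan is to use a Seeley-Lichtenstein-Hestenes type reflection extension, applied separately across each of the two boundaries $z=0$ and $z=-1$. The key observation is that the Fourier multiplier $\Lambda^{s-m}$ acts only on the horizontal variable $X$, so it commutes with any operation on the $z$-variable. Hence the anisotropy of $H^{s,k}$ (integer regularity in $z$, fractional in $X$) causes no difficulty: the problem reduces to a one-dimensional Seeley-type extension in $z$, performed parameter-wise in $X$, with no interaction between the two regularities.

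For the extension across the top boundary, I would choose $k+1$ pairwise distinct parameters $\lambda_1,\dots,\lambda_{k+1}\in(0,1/j]$ and define $Pu(X,z) = \sum_{\ell=1}^{k+1} a_\ell\, u(X,-\lambda_\ell z)$ for $z\in(0,j)$. The constraint $\lambda_\ell\leq 1/j$ guarantees that $-\lambda_\ell z$ lies in $(-1,0)$ whenever $z\in(0,j)$, so the definition makes sense. The coefficients $a_\ell$ are determined by the linear system $\sum_\ell a_\ell(-\lambda_\ell)^m=1$ for $m=0,1,\dots,k$, whose matrix is a Vandermonde and hence invertible since the $\lambda_\ell$ are distinct. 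These matching conditions are chosen precisely so that $\partial_z^m Pu|_{z=0^+}$ coincides with $\partial_z^m u|_{z=0^-}$ as traces for all $m\leq k$; this $C^k$-matching guarantees that $\partial_z^m Pu\in L^2$ for $m\leq k$ with no singular (Dirac) contribution at the interface. A symmetric construction on the bottom side, namely $Pu(X,z) = \sum_\ell b_\ell\, u(X,-1-\mu_\ell(z+1))$ for $z\in(-(j+1),-1)$ with $\mu_\ell\in(0,1/j]$ distinct and $b_\ell$ solving the analogous Vandermonde system, extends $u$ across $z=-1$.

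For the norm bound, one commutes $\Lambda^{s-m}$ with the pullback by the dilation to write $\Lambda^{s-m}\partial_z^m Pu(X,z) = \sum_\ell a_\ell(-\lambda_\ell)^m(\Lambda^{s-m}\partial_z^m u)(X,-\lambda_\ell z)$, and then performs the change of variable $z'=-\lambda_\ell z$; since the image interval $(-\lambda_\ell j,0)$ is contained in $(-1,0)$, this yields an $L^2$ bound of the form $C(k,j)\,|u|_{H^{s,k}(\mathcal{S}_0)}^2$. Summing in $m$ and combining with the symmetric estimate on the bottom side gives \eqref{c4:prolongementth}. Gluing the three pieces (the original $u$ on $\mathcal{S}_0$, the top extension on $(0,j)\times\mathbb{R}^d$, and the bottom extension on $(-(j+1),-1)\times\mathbb{R}^d$) produces the global extension on $\mathcal{S}_j$. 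The main technical point is the Vandermonde matching of $z$-derivatives across the interfaces (the classical Seeley ingredient); the rest is a direct change of variables, and the commutation of $\Lambda^{s-m}$ with reflections in $z$ is what prevents the fractional regularity in $X$ from ever mixing with the $z$-extension mechanism.
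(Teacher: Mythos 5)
Your proposal is correct and follows essentially the same route as the paper: a Seeley--Hestenes reflection $\sum_\ell a_\ell u(X,-\lambda_\ell z)$ with coefficients determined by a Vandermonde system, the commutation of $\Lambda^{s-m}$ with the $z$-reflection, and a change of variables for the norm bound. The only (harmless) differences are that you reach the target strip in one step by scaling the reflection parameters by $1/j$ where the paper iterates unit-length extensions, and that you impose the matching condition up to order $k$ when order $k-1$ already suffices (and is all that makes sense at the level of traces for a general $u\in H^{s,k}$).
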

The proof is postponed to Appendix \ref{c4:appendixB}. We now have the full artillery to extend the solution $\Phi$ defined in the moving domain $\Omega_t^{\epsilon}$ into a function (with the same regularity) defined on a fixed strip $\mathcal{S}^*$. 

Let $\Sigma_t^\epsilon$ be the following diffeomorphism, mapping the flat strip $\mathcal{S} = \mathbb{R}^d\times (-1,0)$ into $\Omega_t^{\epsilon}$:
\begin{align*}
\Sigma_t^{\epsilon} : \mathbb{R}^{d+1} &\rightarrow \mathbb{R}^{d+1} \\
(X,z)&\mapsto (X,(1+\epsilon\zeta (t,X))z+\epsilon\zeta(t,X)).
\end{align*}

It is easy to check that $\Sigma_t^{\epsilon}$ is a homeomorphism that maps $\mathcal{S}$ exactly into $\Omega_t^{\epsilon}$. We now include $\Omega_t^{\epsilon}$ into a fixed strip $\mathcal{S}^*$:

\begin{proposition}
With the notations of Theorem \ref{c4:extension}, there exists a strip $\mathcal{S}_k$, which does not depend either in $\epsilon$ or $t$ such that:
$$\forall 0\leq t\leq \frac{T}{\epsilon},\qquad \Omega_t^{\epsilon} \subset S_k.$$
\end{proposition}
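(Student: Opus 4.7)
The plan is to translate the uniform energy bound from Proposition \ref{c4:propregularity} (point 1, with $k=0$) into a uniform pointwise bound on the free surface $\epsilon\zeta^\epsilon$, and then simply choose $k$ large enough so that $\mathcal{S}_k$ contains the bottom and lies above the maximum possible free surface.

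First, recall the dimensionless fluid domain is $\Omega_t^\epsilon = \{(X,z)\in\R^{d+1} : -1 < z < \epsilon\zeta^\epsilon(t,X)\}$, so including $\Omega_t^\epsilon$ in $\mathcal{S}_k = \R^d\times(-(k+1),k)$ reduces to the two pointwise inequalities $-(k+1)<-1$ (always true as soon as $k\geq 1$) and $\epsilon\zeta^\epsilon(t,X) < k$ for every $X\in\R^d$, every $t\in[0,T/\epsilon]$, and every $\epsilon\in(0,1]$. So the entire question is: obtain a uniform (in $t$ and $\epsilon$) $L^\infty$ bound on $\epsilon\zeta^\epsilon$.

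Next I would apply Proposition \ref{c4:propregularity} with $k=0$, which gives
\begin{equation*}
\vert\zeta^\epsilon\vert_{L^\infty([0,T/\epsilon];H^N(\R^d))} \leq C_2,
\end{equation*}
with $C_2 = C(\mathcal{E}^N(\zeta_0,\psi_0),\tfrac{1}{h_{\min}},\tfrac{1}{a_0})$ independent of $\epsilon$ and $t$. Since the hypotheses of Theorem \ref{c4:lannes} require $N>t_0+t_0\vee 2+3/2$ with $t_0>d/2$, in particular $N>d/2$, so the Sobolev embedding $H^N(\R^d)\hookrightarrow L^\infty(\R^d)$ holds, yielding a constant $C_{\mathrm{Sob}}>0$ with
\begin{equation*}
\vert\zeta^\epsilon(t,\cdot)\vert_{L^\infty(\R^d)} \leq C_{\mathrm{Sob}}\,\vert\zeta^\epsilon(t,\cdot)\vert_{H^N(\R^d)} \leq C_{\mathrm{Sob}}\,C_2
\end{equation*}
for all $t\in[0,T/\epsilon]$. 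Since $\epsilon\leq 1$, this immediately gives $\vert\epsilon\zeta^\epsilon(t,X)\vert \leq C_{\mathrm{Sob}}\,C_2$ uniformly in $t$, $X$, and $\epsilon$.

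Finally I would choose any integer $k\geq \max(1, C_{\mathrm{Sob}}\,C_2)$. Then for every $(t,X)$ and every $\epsilon\in(0,1]$, $-(k+1)<-1<z<\epsilon\zeta^\epsilon(t,X)<k$ on $\Omega_t^\epsilon$, so $\Omega_t^\epsilon\subset\mathcal{S}_k$. The integer $k$ depends only on $C_2$, hence only on the initial data, the minimal water height $h_{\min}$, and the Rayleigh–Taylor constant $a_0$, and is independent of $\epsilon$ and $t$ as required. There is no real obstacle here: the content of the lemma is entirely in the fact that the uniform-in-$\epsilon$ long-time estimate of Proposition \ref{c4:propregularity} combines with a standard Sobolev embedding to give a uniform $L^\infty$ control of the amplitude $\epsilon\zeta^\epsilon$.
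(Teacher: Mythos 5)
Your proof is correct and follows essentially the same route as the paper: both use the uniform-in-$\epsilon$ and $t$ bound on $\zeta^\epsilon$ from Proposition \ref{c4:propregularity} together with the Sobolev embedding into $L^\infty(\R^d)$ (the paper uses $H^2\hookrightarrow L^\infty$ for $d\leq 2$, you use $H^N\hookrightarrow L^\infty$ with $N>d/2$) to control the amplitude $\epsilon\zeta^\epsilon$ and then pick a fixed strip. Your write-up is in fact slightly more explicit than the paper's about the final choice of $k$.
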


\begin{proof}
Proposition \ref{c4:propregularity} gives that:
\begin{align*}[0;\frac{T}{\epsilon}]\times \mathbb{R}^d &\rightarrow \mathbb{R} \\
(t,X)&\mapsto 1+\epsilon\zeta(t,X) 
\end{align*}
is (in particular) in $C([0;\frac{T}{\epsilon}],H^2(\mathbb{R}^d))$ with a bound uniform in $\epsilon$. The continuous embedding $H^2(\mathbb{R}^d) \subset L^{\infty}(\mathbb{R}^d)$ for $d\leq 2$ gives the desired result.$\qquad \Box $
\end{proof}

Now, with the uniform (with respect to $t,X,\epsilon$) bound  of $\zeta(t,X)$, it is easy to check that all strip $S_j$ is mapped by $\Sigma_t^{\epsilon}$ into a strip $\mathcal{S}_k$:
$$\forall j\in\mathbb{N}^*\quad \exists k\in\mathbb{N}^*, \quad\forall t\in [0,\frac{T}{\epsilon}],\quad\forall \epsilon \leq 1, \Sigma_t^{\epsilon} (\mathcal{S}_j) \subset \mathcal{S}_k.$$

\begin{definition}\label{c4:strips}
Let $k,l,j$ be such that: \begin{align*} \mathcal{S}_l &\subset \mathcal{S}_j \\
\Omega_t^{\epsilon}&\subset \mathcal{S}_k \text{ for all } t,\epsilon \\
 \mathcal{S}_l &\subset \Sigma_t^{\epsilon} (\mathcal{S}_j)  \text{ for all } t,\epsilon \\
\mathcal{S}_k &\subset \Sigma_t^{\epsilon} (\mathcal{S}_l) \text{ for all } t,\epsilon
\end{align*} We denote $\mathcal{S}_k = \mathcal{S}^*$, $\tilde{\phi}$ the extension of $\phi$ provided by theorem \ref{c4:extension} into $\mathcal{S}_l$, $\Phi = \phi \circ ({\Sigma_t^{\epsilon}}^{-1})$ and $\tilde{\Phi} = \tilde{\phi}\circ({\Sigma_t^{\epsilon}}^{-1})$.
\end{definition}

%\begin{figure}
%
%\begin{pspicture}(-1,-1)(4,5)
%\psline(-1,2)(3,2)
%\rput(1,2.2){\begin{tiny} $j$ \end{tiny}}
%\psline(-1,1)(3,1) 	
%\rput(1.2,.8){\begin{tiny} $-(j+1)$ \end{tiny}}
%
%
%\pscurve[linewidth=0.5pt]{->}(3.1,2)(3.4,2.5)(3.8,2)
%\rput(3.4,2.7){\begin{tiny} $\Sigma_t^{\epsilon}$ \end{tiny}}
%
%
%\pscurve(4,2.1)(4.5,1.9)(5,2.3)(5.5,1.6)(6,2.1)(6.5,1.8)(7,2.4)(7.5,2)(8,2.4)
%\pscurve(4,1.2)(5,1.4)(6,1.2)(7,.9)(8,1.7)
%\rput(6.1,1.6){\begin{tiny} $\Sigma_t^\epsilon(\mathcal{S}_j)$ \end{tiny}}
%\psline(4,3)(8,3)
%\rput(6.2,3.2){\begin{tiny} $l$ \end{tiny}}
%\psline(4,0)(8,0)
%\rput(6.2,-.2){\begin{tiny} $-(l+1)$ \end{tiny}}
%
%\pscurve[linewidth=0.5pt]{->}(8.1,2)(8.4,2.5)(8.8,2)
%\rput(8.4,2.7){\begin{tiny} $\Sigma_t^{\epsilon}$ \end{tiny}}
%
%\psline(9,3)(13,3)
%\rput(11.2,3.2){\begin{tiny} $k$ \end{tiny}}
%\psline(9,0)(13,0)
%\rput(11.2,-.2){\begin{tiny} $-(k+1)$ \end{tiny}}
%\pscurve(9,2.5)(9.5,2.3)(10,2.7)(10.5,2)(11,2.5)(11.5,2.2)(12,2.8)(12.5,2.4)(13,2.8)
%\pscurve(9,1)(10,1.2)(11,1)(12,.7)(13,1.5)
%\rput(11.2,1.5){\begin{tiny} $\Sigma_t^\epsilon(\mathcal{S}_l)\supset\Omega_t^\epsilon$ \end{tiny}}
%
%
%\end{pspicture}
%\caption{Relations between strips}\label{c4:strips_fig}
%\end{figure}

$\Phi$ is the potential on the domain $\Omega_t^{\epsilon}$, while $\tilde{\Phi}$ is an extension into the fixed strip $\mathcal{S}^*$. 
\begin{remark} The purpose of this definition is to have a distribution $\tilde{\Phi}$ extending $\Phi$ and defined on a flat strip $\S_k$. We therefore extend $\phi$ to $\S_l$, and $\tilde{\Phi}=\tilde{\phi} \circ ({\Sigma_t^{\epsilon}}^{-1})$ is then defined on $\S_k$ (since it contains $\Sigma_t^\epsilon(\S_l)$) which also contains $\Omega_t^\epsilon$. The fact that we need to use three strips instead of two is purely technical (see later the proof of Proposition \ref{c4:phiregularity}).
\end{remark}
\subsection{Regularity after diffeomorphism}
We now need to recover regularity for $\Phi$.
\begin{proposition}
Using the notations of Definition \ref{c4:strips}, let $\phi$ be a distribution on $[0;\frac{T}{\epsilon}]\times\mathcal{S}$  with the following regularity: $$\forall 0\leq k\leq N-1,\quad \partial_t^k\nabla^{\mu,\gamma}\phi\in L^\infty((0;\frac{T}{\epsilon});H^{N-k-1/2,N-k-1}(\mathcal{S}))$$ with a $C_2$ bound. Let $\tilde{\phi}$ be an extension of $\phi$ on the strip $\mathcal{S}_l$, and $\tilde{\Phi}=\tilde{\phi}\circ{\Sigma_t^\epsilon}^{-1}$.
Then, we have, for all $k\leq N-1 $: $$\partial_t^k\nabla^{\mu,\gamma} \tilde{\Phi}\in L^{\infty}((0;\frac{T}{\epsilon});H^{N-k-1/2,N-k-1}(\mathcal{S}^*)$$ with a $C_2$ bound.
\label{c4:phiregularity}\end{proposition}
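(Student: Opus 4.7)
The plan is to reduce the statement to the chain rule and Moser-type product estimates, exploiting the fact that $\Sigma_t^\epsilon$ leaves the horizontal variable $X$ unchanged and is affine in $z$. Write the inverse diffeomorphism as $(\Sigma_t^\epsilon)^{-1}(X,z) = (X,\sigma(t,X,z))$ with
\[
\sigma(t,X,z) = \frac{z-\epsilon\zeta(t,X)}{1+\epsilon\zeta(t,X)},
\]
so that $\tilde{\Phi}(t,X,z) = \tilde{\phi}(t,X,\sigma(t,X,z))$. By Definition \ref{c4:strips} one has $\mathcal{S}^* \subset \Sigma_t^\epsilon(\mathcal{S}_l)$, hence $\sigma$ maps $\mathcal{S}^*$ into $\mathcal{S}_l$ and the composition is well-defined uniformly in $t\in[0;T/\epsilon]$ and $\epsilon\leq 1$. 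The crucial quantitative input is the lower bound $1+\epsilon\zeta \geq h_{\min}$: the Jacobian $\partial_z\sigma = 1/(1+\epsilon\zeta)$ and its inverse are uniformly bounded, so composition with $(\Sigma_t^\epsilon)^{-1}$ is a bi-Lipschitz change of variables in $z$ alone, and $1/(1+\epsilon\zeta)$ can be handled by Moser estimates.

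The first step is a Faà di Bruno expansion. Each derivative $\partial_t^k \partial^{\alpha}\partial_z^{j}\tilde{\Phi}$ is a finite sum of terms of the form
\[
\bigl(\partial_t^{k'}\partial^{\alpha'}\partial_z^{j'}\tilde{\phi}\bigr)\circ(\Sigma_t^\epsilon)^{-1}\;\cdot\;\prod_m \partial_t^{k_m}\partial^{\alpha_m}\partial_z^{n_m}\sigma,
\]
with the natural constraints on the multi-indices and with $n_m\in\{0,1\}$ because $\sigma$ is affine in $z$. Every factor in the product is a polynomial expression in $\zeta$, $1/(1+\epsilon\zeta)$ and their horizontal/time derivatives, evaluated only in $(t,X)$. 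The second step is to bound each such term in $H^{N-k-1/2,N-k-1}(\mathcal{S}^*)$. Integer $z$-derivatives transform via the chain rule without loss; the change of variables in $z$ preserves $L^2_z$ norms up to a universal constant coming from the Jacobian. Since $\Sigma_t^\epsilon$ is the identity on $X$, fractional horizontal derivatives of $\tilde{\phi}\circ(\Sigma_t^\epsilon)^{-1}$ can be re-expanded by the chain rule into the same type of terms. Each product is then controlled by a tame Moser inequality in $H^s(\mathbb{R}^d)$, using that $N-k-1 \geq t_0+1/2$ and so every factor built from $\zeta$ enjoys $L^\infty$ bounds on itself and on the derivatives appearing. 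The hypothesis on $\tilde{\phi}$ and Proposition \ref{c4:propregularity} supply the bound $C_2$ on the remaining factors, and summing over the finitely many terms produces the claimed estimate.

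The main obstacle I anticipate is controlling fractional horizontal derivatives after a nonlinear change of variable, for which the general theory requires pseudo-differential composition estimates. Here this difficulty is bypassed because $\Sigma_t^\epsilon$ is the identity on $X$: the fractional operator $\Lambda_X^s$ only interacts with $\tilde{\phi}$ at the level of the chain rule, where the $z$-slot is evaluated at $\sigma(t,X,z)$ but no horizontal reparametrization takes place. Hence only the standard Moser product estimate is needed, and the bookkeeping reduces to counting derivatives to ensure that one factor is always in $L^\infty$ while the other is in the appropriate Sobolev space. The three-strip construction of Definition \ref{c4:strips} is precisely what guarantees that $\tilde{\phi}$ is defined on the full set $(\Sigma_t^\epsilon)^{-1}(\mathcal{S}^*) \subset \mathcal{S}_l$, so that the extension step and the chain-rule argument are both legitimate.
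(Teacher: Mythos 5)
There is a genuine gap, and it sits exactly at the point you claim to have "bypassed". Writing $\tilde{\Phi}(t,X,z)=\tilde{\phi}\bigl(t,X,\sigma(t,X,z)\bigr)$ with $\sigma=(z-\epsilon\zeta)/(1+\epsilon\zeta)$, it is true that the first slot is untouched, but the point at which $\tilde{\phi}$ is evaluated has a $z$-coordinate depending on $X$ through $\zeta(t,X)$. So $X\mapsto\tilde{\Phi}(t,X,z)$ is a genuine inner composition in $X$, not a product of $X$-dependent coefficients with a fixed function, and there is no chain rule for $\Lambda_X^{1/2}$. Your Fa\`a di Bruno expansion plus Moser estimates only delivers the integer part $H^{N-k-1}_X$ of the claimed regularity; the residual half horizontal derivative of each term $(\partial^\beta\tilde{\phi})\circ(\Sigma_t^\epsilon)^{-1}$ is precisely what must be estimated, and "re-expanding by the chain rule" is not an available operation. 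This is not a bookkeeping detail: it is the only non-elementary step of the proposition.

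The paper's proof is organized around this point. Its base case uses the Slobodeckij double-integral characterization of $H^{N+1/2}(\R^d)$ and performs two successive changes of variables $(X,z)=\Sigma_t^\epsilon(X,u)$ and $(Y,u)=\Sigma_t^\epsilon(Y,v)$ inside the double integral; this is also where the three nested strips of Definition \ref{c4:strips} are really needed (each change of variables enlarges the $z$-domain once), not merely for the well-definedness of the composition as you suggest. The integer derivatives are then peeled off by the chain rule and handled by an induction on $N$, and the time derivatives by a further induction on $k$. To repair your argument without copying that scheme, you could, for $g(X)=f(X,\sigma(t,X,z))$, split
\begin{equation*}
g(X)-g(Y)=\bigl[f(X,\sigma(X,z))-f(Y,\sigma(X,z))\bigr]+\bigl[f(Y,\sigma(X,z))-f(Y,\sigma(Y,z))\bigr],
\end{equation*}
bound the first bracket by the Gagliardo seminorm of $f(\cdot,u)$ after integrating in $z$ and changing variables, and the second by $\vert\sigma(X,z)-\sigma(Y,z)\vert\lesssim\vert X-Y\vert$ (uniform Lipschitz bound on $\epsilon\zeta$, with constant independent of $\epsilon$) times an $L^2$ bound on $\partial_z f$; note that this costs one $z$-derivative of $f$, which is exactly why the second index in the spaces $H^{s,k}$ must be tracked and why the conclusion cannot follow from horizontal estimates alone. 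The rest of your outline (domains, uniform bounds via $1+\epsilon\zeta\geq h_{\min}$, integer $z$-derivatives, time derivatives) is sound and consistent with the paper.
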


The proof is postponed in Section \ref{c4:appendixB} for the sake of clarity.

\subsection{Craig-Sulem-Zakharov formulation to Euler formulation} 
We now build solutions to the free surface Euler equation \eqref{c4:eulerystem} from the potential $\tilde{\Phi}$ introduced before. We first prove that $\tilde{\Phi}$ satisfies the Bernoulli equations on the domain $\Omega_t^\epsilon$. One considers the following distribution of $\mathcal{D}'((0;\frac{T}{\epsilon});\mathcal{S}^*)$:
$$\partial_t\tilde{\Phi} + \frac{\epsilon}{2}\vert\nabla^{\mu,\gamma}\tilde{\Phi}\vert^2+\frac{1}{\epsilon}z,$$
which exists in $L^{\infty}((0;\frac{T}{\epsilon});H^{N-1/2,N-1}(\mathcal{S}^*))$. \par

Let us consider a measurable function $F$ of the variable $t$, such that for almost every $t\in (0;\frac{T}{\epsilon})$, F is equal to this quantity. For almost every $t$, $F$ is an element of $H^{N-1/2,N-1}(\mathcal{S}^*))$ which trace on the boundary $z=\epsilon \zeta$ of $\Omega_t^{\epsilon}$ is equal to zero (this statement implies the second equation of the Craig-Sulem-Zakharov formulation \eqref{c4:wwequation}). For a complete and rigorous proof of this statement, see \cite{alazard}. In particular, for almost every $t$, $F$ is equal to an element of \begin{equation*}H_{0,surf}^{N-1/2,N-1}(\mathcal{S}^*) = \overline{ \mathcal{S}^* \setminus \lbrace z=\epsilon\zeta\rbrace}_{\vert\vert.\vert\vert_{H^{N-1/2,N-1}(\mathcal{S}^*)}}.\end{equation*}

We denote this element $-\frac{1}{\epsilon}P$. Of course, $P$ corresponds physically to the pressure. We can rewrite this as the Bernoulli equation: 

\begin{equation}\partial_t\tilde{\Phi} + \frac{\epsilon}{2}\vert\nabla^{\mu,\gamma}\tilde{\Phi}\vert^2+\frac{1}{\epsilon}z = -\frac{1}{\epsilon}P\label{c4:bernouilli}\end{equation}
in $\mathcal{D}'((0;\frac{T}{\epsilon});\Omega_t^{\epsilon})$, where $((0;\frac{T}{\epsilon});\Omega_t^{\epsilon}) = \lbrace (t,X,z)\in (0;\frac{T}{\epsilon})\times\mathcal{S}^*, (X,z)\in\Omega_t^{\epsilon}\rbrace$ (note that this is an open set of $(0;\frac{T}{\epsilon})\times\mathcal{S}^*$). 
\begin{remark}
It is easy to deduce from the Bernoulli equation \eqref{c4:bernouilli} that for all $k\leq N_0$: 
$$\partial_t^k P\in L^{\infty}((0;\frac{T}{\epsilon});H^{N-k-1/2,N-k-1}(\mathcal{S}^*)$$ with a bound uniform in $\epsilon$. \label{c4:pregularity}
\end{remark}

Now, defining $U=\nabla^{\mu,\gamma}\tilde{\Phi}$, we have immediately 
\begin{align}\label{c4:eulerfree}\begin{cases}
\displaystyle{\partial_t V + \epsilon (V\cdot\nabla^{\gamma} + \frac{1}{\mu}w\partial_z)V = -\frac{1}{\epsilon} \nabla^{\gamma} P} \text{ in } \mathcal{D}'((0;\frac{T}{\epsilon});\Omega_t^{\epsilon})\\
\displaystyle{\partial_t w + \epsilon (V\cdot\nabla^{\gamma} + \frac{1}{\mu}w\partial_z)w = -\frac{1}{\epsilon} (\partial_z P+1)} \text{ in } \mathcal{D}'((0;\frac{T}{\epsilon});\Omega_t^{\epsilon})\\
\displaystyle{\partial_t \zeta +\epsilon\underline{V}\cdot\nabla^{\gamma}\zeta - \frac{1}{\mu}\underline{w} = 0 }  \text{ in } \mathcal{D}'((0;\frac{T}{\epsilon});\R^d) \\
\nablamug\cdot(U) = 0 \text{ in } \mathcal{D}'((0;\frac{T}{\epsilon});\Omega_t^{\epsilon})\\
\mbox{\rm curl}^{\mu,\gamma}(U) = 0 \text{ in } \mathcal{S}^*\\
U\cdot n = 0 \text { on } z=-1
\end{cases}\end{align}

\begin{remark}
- The third equation of the Euler system \eqref{c4:eulerfree} is just the first equation of the Zakharov-Craig-Sulem equation. \par 
- Note that the regularity of $U = \nabla^{\mu,\gamma}\tilde{\Phi}$ is given by Proposition \ref{c4:phiregularity}

\end{remark}

\section{The lack of strong convergence from Euler to rigid-lid}
We now have a proper set of solutions $(U^\epsilon,\zeta^\epsilon,P^\epsilon)$ to the free surface Euler equations \eqref{c4:eulerystem}, defined in the following spaces:
\begin{align}
\begin{cases}
\forall k\leq N-1,\quad\partial_t^k U^{\epsilon}\in L^{\infty}((0;\frac{T}{\epsilon});H^{N-k-1/2,N-k-1}(\mathcal{S}^*)) \\
\forall k\leq N-1,\quad \partial_t^k\zeta^{\epsilon}\in L^{\infty}([0;\frac{T}{\epsilon}];H^{N-k}(\mathbb{R}^d)) \\
\forall k\leq N-1, \quad \partial_t^k P^{\epsilon}\in L^{\infty}([0;\frac{T}{\epsilon}];H^{N-k-1/2,N-k-1}(\mathcal{S}^*))\label{c4:Ureuglarite}
\end{cases}
\end{align}
with bounds uniform in $\epsilon$ of the form $C(\mathcal{E}^N(\zeta_0,\psi_0),\frac{1}{h_{\min}},\frac{1}{a_0})$. In order to prove the weak convergence to the solutions of the rigid lid model, we need to prove some compactness results in the Beppo-Levy spaces in order to extract a convergent sub-sequence of solutions.
\subsection{Compactness result}
We recall that $A\subset\subset B$ means that there exists a compact $K$ such that $A\subset K\subset B$.
\begin{lemma}\label{c4:compactemb}
Let $k\geq 1$. We have the following compact embedding:
$$H^{s,k}_{loc}(\mathcal{S}^*) \subset H^{s-1,k-1}_{loc}(\mathcal{S}^*) \quad\forall 1\leq k\leq s$$\label{c4:lemmacompact}
\end{lemma}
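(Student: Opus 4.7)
The plan is to reduce to a local statement on a bounded subdomain and then apply the Aubin–Lions lemma level by level in the number $j$ of vertical derivatives. Let $\Omega = B \times I \subset\subset \mathcal{S}^*$ with $B \subset \R^d$ a ball and $I$ a bounded interval in $z$. Let $(u_n)$ be a bounded sequence in $H^{s,k}_{loc}(\mathcal{S}^*)$, which I may assume bounded in $H^{s,k}(\Omega')$ for a slightly larger $\Omega'=B'\times I'$. By the definition of $H^{s,k}$, this means that for every $0\leq j\leq k$, $\partial_z^j u_n$ is bounded in $L^2(I';H^{s-j}(B'))$.

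The key observation is that for every $0\leq j\leq k-1$, one has simultaneously
\begin{equation*}
\partial_z^j u_n \text{ bounded in } L^2(I';H^{s-j}(B')), \qquad \partial_z(\partial_z^j u_n) = \partial_z^{j+1} u_n \text{ bounded in } L^2(I';H^{s-j-1}(B')).
\end{equation*}
Since $B'$ is bounded, the Rellich–Kondrachov theorem provides the compact embedding $H^{s-j}(B) \subset\subset H^{s-j-1}(B)$ (the condition $k\leq s$ ensures the exponents $s-j$ are non-negative throughout). Applying the Aubin–Lions lemma with $X_0 = H^{s-j}(B)$, $X = X_1 = H^{s-j-1}(B)$, and with $z$ as the ``time'' variable, gives that $(\partial_z^j u_n)$ is relatively compact in $L^2(I; H^{s-j-1}(B))$.

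Performing this for each $j = 0,1,\ldots, k-1$ and extracting a diagonal subsequence, I obtain a single subsequence, still denoted $(u_n)$, along which $\partial_z^j u_n$ converges in $L^2(I;H^{s-1-j}(B))$ for every $0\leq j\leq k-1$. Summing these convergences according to the definition of the $H^{s-1,k-1}(\Omega)$ norm,
$$ \|v\|_{H^{s-1,k-1}(\Omega)} = \sum_{j=0}^{k-1} |\Lambda^{s-1-j}\partial_z^j v|_{L^2(\Omega)}, $$
yields convergence of $(u_n)$ in $H^{s-1,k-1}(\Omega)$, which is the desired compactness in $H^{s-1,k-1}_{loc}(\mathcal{S}^*)$.

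The main technical point is the verification that Aubin–Lions applies in this mixed-Sobolev setting with a fractional $X$-regularity index; this is standard once one works on a bounded $X$-domain so that Rellich gives the compact embedding $X_0 \subset\subset X$. The restriction $k\leq s$ is needed exactly to keep $s-j\geq 0$ for all $j$ appearing in the argument, and the fact that we only assert the embedding locally is essential because Rellich fails on $\R^d$.
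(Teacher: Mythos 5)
Your argument is correct, but it runs on a different engine than the paper's. The paper establishes the base case $H^{s,1}_{loc}\subset\subset H^{s-1,0}_{loc}$ by hand: it proves the translation estimate $\int_\omega\vert(\Lambda^{s-1}u)(\cdot-h)-\Lambda^{s-1}u\vert^2\leq \vert u\vert_{H^{s,1}}^2\vert h\vert^2$ treating $X$ and $z$ jointly, invokes Riesz--Fr\'echet--Kolmogorov to get precompactness of $\Lambda^{s-1}u_n$ in $L^2(\omega)$, and then bootstraps to general $k$ by applying this one-step result to each $\partial_z^l u_n$ (using $\partial_z^l u_n\in H^{s-l,k+1-l}$ with $k+1-l\geq 1$) and identifying the limits via uniqueness in $\mathcal{D}'$. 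You instead treat $z$ as a time variable and, for each $0\leq j\leq k-1$, feed the pair of bounds on $\partial_z^j u_n$ and $\partial_z^{j+1}u_n$ into Aubin--Lions with $X_0=H^{s-j}(B)\subset\subset X=X_1=H^{s-j-1}(B)$, the compactness of $X_0\subset X$ coming from Rellich on the bounded ball. Both routes are sound and structurally parallel (a one-derivative-loss compactness mechanism applied level by level in $j$, followed by diagonal extraction over an exhaustion of $\mathcal{S}^*$); the difference is the core compactness lemma. The paper's version is more self-contained and yields an explicit equicontinuity modulus; yours outsources the work to two standard black boxes (fractional Rellich on a bounded Lipschitz domain and Aubin--Lions with $p=q=2$) and has the small advantage that Aubin--Lions delivers compactness of each $\partial_z^j u_n$ directly in the target space, so the identification of the limit as $\partial_z^j u$ is immediate. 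The only points you should make explicit if writing this up are (i) the precise meaning of $H^{s-j}(B)$ for fractional $s-j$ as the restriction space, so that Rellich applies, and (ii) that the condition $1\leq k\leq s$ keeps all indices $s-j-1\geq 0$, as you note.
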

\begin{proof}
The proof remains the same as for the Rellich-Kondrachov theorem, in the case of Sobolev spaces (see \cite{brezis}). Let $\omega\subset\subset \mathcal{S}^*$. \par 
1) We first show the compact embedding of $H^{s,1}(\omega)$ into $H^{s-1,0}(\omega)$.  We write, for $h=(h_1,h_2)\in \mathbb{R}\times\mathbb{R}^d$, $\vert h\vert < d(\omega,(\mathcal{S}^*)^\complement)$ and $u\in C^{\infty}(\mathcal{S}^*)$:
\begin{align*}
&\int_{\omega}\vert (\Lambda^{s-1}u)(X-h_1,z-h_2)-(\Lambda^{s-1}u)(X,z)\vert^2 dXdz \\ &\leq \int_{\omega} \vert \int_0^1 (\nabla_{X,z}\Lambda^s u)(X-th_1,z-th_2).(h_1,h_2)dt\vert^2dXdz.\end{align*} We now use Jensen's inequality to write:
\begin{align*}
&\int_{\omega}\vert (\Lambda^{s-1}u)(X-h_1,z-h_2)-(\Lambda^{s-1}u)(X,z)\vert^2 dXdz\\
&\leq  \int_{\mathcal{S}^*}  \int_0^1 \vert\nabla_{X,z} \Lambda^{s-1} u(X,z)\vert^2\vert h\vert^2dtdXdz\\
&\leq \vert u\vert_{H^{s,1}(\mathcal{S}^*)}^2\vert h\vert^2
\end{align*}
and the result stands true for all $u\in H^{s,1}(\mathcal{S}^*)$ by density of $C^{\infty}(\mathcal{S}^*)$ in this space. \par\vspace{\baselineskip}
It follows from the theorem of Riesz-Frechet-Kolmogorov that if $(u_n)_n$ is bounded in $H^{s,1}(\mathcal{S}^*)$, then ${({\Lambda^{s-1}u_n}_{\mid\omega})}_n$ is precompact in $L^2(\omega)$. Such result is true for all $\omega\subset\subset \mathcal{S}^*$ and therefore, by a diagonal sub-sequence argument, one proves the compact embedding $H^{s,1}_{loc}(\mathcal{S}^*)$ into $H^{s-1,0}_{loc}(\mathcal{S}^*)$.
\par

2) For the general case, let $k\in\mathbb{N}^*$ and $s\geq k$, let $(u_n)_n$ be bounded in $H^{s,k+1}(\mathcal{S}^*)$ with $s\geq k+1$. By the definition of Beppo-Levy spaces (recall Section \ref{c4:notations}), to prove the convergence of a sub-sequence in $H^{s-1,k}_{loc}(\mathcal{S}^*)$, one must prove the existence of a sub-sequence ${(u_{\varphi(n)})}_n$ such that for all $0\leq l\leq k$, the sequence ${(\partial_z^l u_{\varphi(n)})}_n$ converges in $L^2_{loc}((-(k+1),k), H^{s-1-l}_{loc}(\R^d))$ (recall that $\S^* = (-(k+1);k)$). From the case $k=1$, there exists $u\in L^2(\mathcal{S}^*)$ and a sub-sequence  ${(u_{\varphi(n)})}_n$ convergent to $u$ in $L^2_{loc}(\mathcal{S}^*)$. In particular, ${(\partial_z^l u_{\varphi(n)})}_n$ is convergent to $\partial_z^l u$ in $\mathcal{D}'(\mathcal{S}^*)$. But one has also ${(\partial_z^l u_{\varphi(n)})}_n \in H^{s-l,k+1-l}(\mathcal{S}^*)$ and therefore this sequence converges, up to a sub-sequence, in $L^2_{loc}((-(k+1),k), H^{s-1-l}_{loc}(\R^d))$, since $k+1-l\geq 1$. By uniqueness of the limit in $\mathcal{D}'(\mathcal{S}^*)$, the limit is $\partial_z^l u$.\qquad $\Box$
\end{proof}

\subsection{Extraction of a convergent sub-sequence for $U^\epsilon$}
We now perform the rigid lid time scaling: $$t'=\epsilon t$$ and the change of unknown $P'=\frac{1}{\epsilon^2}(P+z)$. We now have $$U^\epsilon\in L^\infty((0;T);H^{N-1/2,N-1}(\mathcal{S}^*)^{d+1}),\zeta^\epsilon\in L^\infty((0;T);H^N(\R^d))$$ and the following equations are satisfied:
\begin{align}\label{c4:eulerfreers}\begin{cases}
\displaystyle{\partial_t V^\epsilon +  (V^\epsilon\cdot\nabla^{\gamma} + \frac{1}{\mu}w^\epsilon\partial_z)V^\epsilon = -\frac{1}{\epsilon} \nabla^{\gamma} P^\epsilon} \text{ in } \mathcal{D}'((0;T);\Omega_t^{\epsilon})\\
\displaystyle{\partial_t w^\epsilon +  (V^\epsilon\cdot\nabla^{\gamma} + \frac{1}{\mu}w^\epsilon\partial_z)w^\epsilon = -\frac{1}{\epsilon} (\partial_z P^\epsilon)} \text{ in } \mathcal{D}'((0;T);\Omega_t^{\epsilon})\\
\displaystyle{\partial_t \zeta^\epsilon +\underline{V}^\epsilon\cdot\nabla^{\gamma}\zeta^\epsilon - \frac{1}{\mu\epsilon^2}\underline{w}^\epsilon = 0 } \text{ in } \mathcal{D}'((0;T);\R^d) \\
\nablamug\cdot(U^\epsilon) = 0 \text{ in } \mathcal{D}'((0;T);\Omega_t^{\epsilon})\\
\mbox{\rm curl}^{\mu,\gamma}(U^\epsilon) = 0 \text{ in } \mathcal{S}^*\\
U^\epsilon\cdot n = 0 \text { on } z=-1
\end{cases}\end{align}
We now prove the following Theorem:
\begin{theorem}
There exists $U,P,\zeta$ distributions of $D'((0;T);\Omega)$ such that $$U\in H^{N-3/2,N-2}(\S^*), U\in H^{N-5/2,N-3}(\S^*),\zeta\in H^{N-1}(\R^d)$$ are solutions in the distributional sense of $D'((0;T);\Omega)$ of the rigid lid equations  \eqref{c4:rigidlid}. Moreover, the solutions of \eqref{c4:eulerfreers} converge in the distributional sense of $D'((0;T);\Omega_t^\epsilon)$ to these solutions of the rigid-lid equation \ref{c4:rigidlid}. The convergence of $(U^\epsilon)_\epsilon$ is not strong in $L^\infty((0;T);L^2(\Omega))$.
\end{theorem}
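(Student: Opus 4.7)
The plan is to combine the uniform bounds provided by Theorem \ref{c4:equivasol} with a compactness extraction in the Beppo-Levy spaces, pass to the distributional limit in \eqref{c4:eulerfreers}, and finally use Lemma \ref{c4:div_curl} together with the conservation of the Hamiltonian to identify the limit as zero and simultaneously rule out strong convergence.

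First I would perform the rigid lid rescaling $t'=\epsilon t$; the bounds \eqref{c4:Ureuglarite} then become uniform in $\epsilon$ on the fixed interval $(0,T)$, both for the unknowns and for all their time derivatives up to order $N$. Lemma \ref{c4:compactemb} provides the compact embedding $H^{s,k}_{\text{loc}}(\mathcal{S}^*)\subset H^{s-1,k-1}_{\text{loc}}(\mathcal{S}^*)$. Combining these space bounds with the uniform control of $\partial_t U^\epsilon$ in $L^\infty_t H^{N-3/2,N-2}(\mathcal{S}^*)$ in an Aubin-Lions / Arzelà-Ascoli argument, I extract a subsequence $U^\epsilon \to U$ in $C([0,T]; H^{N-3/2,N-2}_{\text{loc}}(\mathcal{S}^*))$, and similarly $\zeta^\epsilon \to \zeta$ in $C([0,T]; H^{N-1}_{\text{loc}}(\R^d))$. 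A slightly lossier extraction applied to the rescaled pressure yields $P^\epsilon \to P$ with the claimed regularity $H^{N-5/2,N-3}$.

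Next I pass to the limit in each line of \eqref{c4:eulerfreers}. The linear equations $\nabla^{\mu,\gamma}\cdot U^\epsilon=0$, $\mathrm{curl}^{\mu,\gamma}U^\epsilon=0$ and the no-flux condition at $z=-1$ transfer immediately to the limit. Strong local convergence of $U^\epsilon$ combined with weak convergence of its gradients handles the convective terms $(V^\epsilon\cdot\nabla^\gamma+\frac{1}{\mu}w^\epsilon\partial_z)U^\epsilon$ in the distributional sense. The main obstacle is the singular $1/\epsilon$ scaling in the momentum equation and the $1/(\mu\epsilon^2)$ scaling in the kinematic equation. For the former, the uniform bound on the left-hand side forces $\nabla P^\epsilon$ to be bounded, which allows me to identify a limit pressure via a renormalized pressure $\epsilon^{-1} P^\epsilon$; the trace identity $P=\zeta$ at $z=0$ is then obtained by passing to the trace in the rescaled Bernoulli equation \eqref{c4:bernouilli}. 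For the kinematic equation, boundedness of $\partial_t\zeta^\epsilon$ and of $\underline{V}^\epsilon\cdot\nabla^\gamma\zeta^\epsilon$ forces $\underline{w}^\epsilon=O(\epsilon^2)$, and hence the rigid lid condition $\underline{w}=0$ in the limit.

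Finally, the limit $(U,P,\zeta)$ solves the rigid lid system \eqref{c4:rigidlid}, so by Lemma \ref{c4:div_curl} we necessarily have $U\equiv 0$ (and consequently $\zeta\equiv 0$, $P\equiv 0$). To rule out strong convergence in $L^\infty((0,T);L^2(\Omega))$, I invoke the conservation of the Hamiltonian \eqref{c4:hamiltonian}, which is invariant under the rigid-lid rescaling $t'=\epsilon t$: writing the kinetic energy on $\Omega_t^\epsilon$ through the Dirichlet-Neumann identity and \eqref{c4:equiDN} yields
\begin{equation*}
\tfrac{1}{2}\int_{\Omega_t^\epsilon}|U^\epsilon|^2\,dXdz \;=\; \tfrac{1}{2\mu}(G[\epsilon\zeta^\epsilon]\psi^\epsilon,\psi^\epsilon)_2 \;\geq\; \tfrac{c}{2}|\mathfrak{P}\psi^\epsilon|_2^2,
\end{equation*}
and this quantity together with $|\zeta^\epsilon|_2^2$ equals a non-zero constant determined by the initial data, uniformly in $\epsilon$ and $t\in[0,T]$. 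Strong convergence of $U^\epsilon$ to $0$ in $L^\infty_t L^2_x(\Omega)$ would force this energy to vanish in the limit, a contradiction. The most delicate technical point I anticipate is the rigorous handling of the singular factor $\frac{1}{\mu\epsilon^2}\underline{w}^\epsilon$ in the kinematic equation together with the matching of the renormalized pressure at the surface, since both rely on the precise cancellations built into the rescaling rather than on brute uniform bounds.
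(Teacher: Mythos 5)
Your compactness extraction, the passage to the limit in the nonlinear terms via the product estimate in Beppo--Levi spaces, the identification of the limit pressure from the momentum equation, and the derivation of $\underline{w}=0$ by exploiting the $\frac{1}{\mu\epsilon^2}$ factor in the kinematic equation all follow the same route as the paper (the paper identifies $\nabla^\gamma P$ via De Rham's theorem, which is the clean way to phrase your ``renormalized pressure'' step, and it invokes Lemma \ref{c4:div_curl} only implicitly).

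There is, however, a genuine gap in your argument for the failure of strong convergence. Conservation of the Hamiltonian \eqref{c4:hamiltonian} only tells you that
\begin{equation*}
\frac{1}{2\mu}\bigl(G[\epsilon\zeta^\epsilon]\psi^\epsilon,\psi^\epsilon\bigr)_2+\vert\zeta^\epsilon\vert_2^2
\end{equation*}
is a fixed nonzero constant; it does \emph{not} exclude the scenario in which all of this energy migrates into the potential part $\vert\zeta^\epsilon\vert_2^2$ while the kinetic part --- hence $\int_{\Omega_t^\epsilon}\vert U^\epsilon\vert^2$ --- tends to zero. So ``strong convergence of $U^\epsilon$ to $0$ would force this energy to vanish'' is false as stated: it would only force the kinetic half to vanish, which is compatible with conservation of the total. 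This is precisely the subtlety the paper points out in the remark following Theorem \ref{c4:theorem_nonstrongcv}: ruling out such a transfer of energy requires the stationary-phase analysis of Proposition \ref{c4:oscillatory}, which shows that asymptotically the energy equidistributes between $\zeta$ and $\B\psi$ (so that $\vert\B\psi^\epsilon(t)\vert_2^2$ has a nonzero limit), together with Theorem \ref{c4:lintofull} to transfer this from the linearized to the full system. The paper's proof closes this step by citing Theorem \ref{c4:theorem_nonstrongcv}; your proof must do the same (or reproduce that dispersive argument), since energy conservation alone is not enough. A secondary caveat: after the rescaling $t'=\epsilon t$ each time derivative picks up a factor $\epsilon^{-1}$, so the ``uniform bounds on all time derivatives up to order $N$'' you feed into Aubin--Lions need to be justified in the new time variable rather than read off directly from \eqref{c4:Ureuglarite}.
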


By Lemma \ref{c4:compactemb}, there exists $U^\epsilon\in H^{N-3/2,N-2}(\S^*)$ such that $(U^\epsilon)_\epsilon$ converges up to a sub-sequence to $U$ in $H^{N-3/2,N-2}_{loc}(\S^*)$. Moreover, since $(U^\epsilon)_\epsilon \in L^\infty((0;T);H^{N-1/2,N-1}(\S^*)^{d+1})$, one has also $U \in L^\infty((0;T);H^{N-3/2,N-2}(\mathcal{S}^*)^{d+1})$. Therefore, $(U^\epsilon)_\epsilon$ converges up to a sub-sequence to $U$ in $D'((0;T);\S^*)$. Therefore, $(\partial_t U^\epsilon)_\epsilon$  converges (up to a sub-sequence) to $\partial_t U$ in $D'((0;T);\S^*)$. \par\vspace{\baselineskip}
In view of taking the limit as $\epsilon$ goes to zero in the first equation of \eqref{c4:eulerfreers}, one must prove the convergence of the non-linear terms. We do it for the term $V^\epsilon\cdot\nablag V^\epsilon$. One uses the following estimate, which proof can be found in \cite{david} Corollary B.5. 
$$\vert fg\vert_{H^{N-3/2,N-2}} \leq C \vert f\vert_{H^{N-1/2,N-1}}\vert g\vert_{H^{N-3/2,N-2}}$$ for all $N-2\geq d/2$ (which is true for our choice of $N$, since $N-2\geq t_0+3/2 >d/2$).  
Such estimate proves that $V^\epsilon\cdot\nablag V^\epsilon \in L^\infty H^{N-3/2,N-2}$, and therefore one has the convergence of this term (up to a sub-sequence) as $\epsilon$ goes to zero, in $D'((0;T);\S^*)$. It proves, using the equation \eqref{c4:eulerfreers} that $(\nabla^\gamma P^\epsilon)_\epsilon$ converges in $D'((0;T);\S^*)$ to an element $G$. By De Rham's Theorem, there exists $P\in D'((0;T);\S^*)$ such that $G=\nablag P$. The same study can be done for the second equation of \ref{c4:eulerfreers} and one obtains, passing to the limit as $\epsilon$ goes to zero:
\begin{align*}
\displaystyle{\partial_t V +  (V\cdot\nabla^{\gamma} + \frac{1}{\mu}w\partial_z)V = - \nabla^{\gamma} P} \text{ in } \mathcal{D}'((0;T);\Omega)\\
\displaystyle{\partial_t w +  (V\cdot\nabla^{\gamma} + \frac{1}{\mu}w\partial_z)w = -\frac{1}{\epsilon} (\partial_z P)} \text{ in } \mathcal{D}'((0;T);\Omega
\end{align*}
with $U=(V,w)\in L^\infty((0;T);H^{N-3/2,N-2}(\mathcal{S}^*)^{d+1}, \nablamug P\in L^\infty((0;T);H^{N-5/2,N-3}(\mathcal{S}^*)^{d+1}$. \par\vspace{\baselineskip}  We now pass to the limit in the third equation (on $\zeta^\epsilon$) of \eqref{c4:eulerfreers}. The same reasoning as before proves that $(\zeta^\epsilon)_\epsilon$ converges up to a sub-sequence to a $\zeta\in H^{N-1}(\R^d)$ in $H^{N-1}_{loc}(\R^d)$. Therefore, the same convergence occurs in $D'((0;T);\S^*)$, and $(\partial_t\zeta)_\epsilon $ converges up to a sub-sequence to $\partial_t \zeta$ in $D'((0;T);\S^*)$. We prove as before the convergence of the bi-linear term $(\underline{V}^\epsilon\cdot\nabla^{\gamma}\zeta^\epsilon)_\epsilon$ in $D'((0;T);\S^*)$. Multiplying the third equation of \ref{c4:eulerfreers} by $\epsilon^2$, one finds at the limit the following equation:
$$\underline{w}=0 \text { in } \mathcal{D}'((0;T);\Omega_t^{\epsilon}).$$
Finally, one obtains at the limit $\epsilon$ goes to zero, the following equations:
\begin{align*}\begin{cases}
\displaystyle{\partial_t V +  (V\cdot\nabla^{\gamma} + \frac{1}{\mu}w\partial_z)V =  -\nabla^{\gamma} P} \text{ in } \Omega \\
\displaystyle{\partial_t w +  (V\cdot\nabla^{\gamma} + \frac{1}{\mu}w\partial_z)w =  -(\partial_z P) }\text{ in } \Omega \\
\underline{w} = 0 \\
\nabla^{\mu,\gamma}\cdot U = 0 \text{ in } \Omega \\
\mbox{\rm curl}^{\mu,\gamma}(U) = 0 \text{ in } \Omega \\
U\cdot n = 0 \text{ for }  z=-1 \\
\end{cases}\end{align*}
and the study above prove the following regularity for the unknowns: $U\in H^{N-3/2,N-2}(\S^*),\nablamug P\in U\in H^{N-5/2,N-3}(\S^*),\zeta\in H^{N-1}(\R^d)$. The sequence $(U^\epsilon)_\epsilon$ does not converge in $L^\infty((0;T);L^2(\Omega)$. Indeed, it would imply the strong convergence of $\B\psi$ in $L^2(\R^d)$ which is not satisfied, according to Theorem \ref{c4:theorem_nonstrongcv}.   $\qquad\Box$

\begin{appendix}

\section{The Dirichlet Neumann Operator}\label{c4:appendixA}
Here are for the sake of convenience some technical results about the Dirichlet Neumann operator, and its estimates in Sobolev norms. See \cite{david} Chapter 3 for complete proofs. The first two  propositions give a control of the Dirichlet-Neumann operator. 

\begin{proposition}\label{c4:314}
Let $t_0$>d/2, $0\leq s \leq t_0+3/2$ and $(\zeta,\beta)\in H^{t_0+1}\cap H^{s+1/2}(\mathbb{R}^d)$ such that  \begin{equation*} \exists h_0>0,\forall X\in\mathbb{R}^d,  \epsilon\zeta(X)-\beta b(X) +1 \geq h_0\end{equation*} 

(1)\quad The operator $G$ maps continuously $\overset{.}H{}^{s+1/2}(\mathbb{R}^d)$ into $H{}^{s-1/2}(\mathbb{R}^d)$ and one has 
\begin{equation*}
\vert G\psi\vert_{H^{s-1/2}} \leq \mu^{3/4} M(s+1/2) \vert\mathfrak{P}\psi\vert_{H^s},
\end{equation*}
where $M(s+1/2)$ is a constant of the form $C(\frac{1}{h_0},\vert\zeta\vert_{H^{t_0+1}},\vert b\vert_{H^{t_0+1}},\vert\zeta\vert_{H^{s+1/2}},\vert b\vert_{H^{s+1/2}})$.
\par

(2)\quad The operator $G$ maps continuously $\overset{.}H{}^{s+1}(\mathbb{R}^d)$ into $H{}^{s-1/2}(\mathbb{R}^d)$ and one has 
\begin{equation*}
\vert G\psi\vert_{H^{s-1/2}} \leq \mu M(s+1) \vert\mathfrak{P}\psi\vert_{H^s+1/2},
\end{equation*}
where $M(s+1)$ is a constant of the form $C(\frac{1}{h_0},\vert\zeta\vert_{H^{t_0+1}},\vert b\vert_{H^{t_0+1}},\vert\zeta\vert_{H^{s+1}},\vert b\vert_{H^{s+1}})$.
\par

Moreover, it is possible to replace $G$ by $\underline{w}$ in the previous result, where $\underline{w} = \frac{G\psi+\epsilon\mu\nablag\zeta\cdot\nablag\psi}{1+\epsilon^2\mu\vert\nablag\zeta\vert^2}$(vertical component of the velocity $U=\nabla_{X,z}\Phi$ at the surface).
\end{proposition}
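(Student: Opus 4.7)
The plan is to reduce everything to an elliptic estimate for the transformed Laplace problem on the fixed flat strip $\mathcal{S} = \mathbb{R}^d \times (-1,0)$. Using the diffeomorphism $\Sigma_t^{\epsilon}$ from \eqref{c4:diffeoS}, one converts the definition $G\psi = (\partial_z \Phi - \mu \nabla^{\gamma}(\epsilon\zeta)\cdot\nabla^{\gamma}\Phi)_{\vert z=\epsilon\zeta}$ into an expression involving only the trace at $z=0$ of $\nabla^{\mu,\gamma}\phi$, where $\phi = \Phi \circ \Sigma_t^{\epsilon}$ solves the flattened problem $\nabla^{\mu,\gamma}\cdot P(\Sigma_t^{\epsilon})\nabla^{\mu,\gamma}\phi = 0$ with Dirichlet data $\psi$ at $z=0$ and Neumann condition at $z=-1$. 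The trace theorem then turns the desired $H^{s-1/2}(\mathbb{R}^d)$ bound into an interior $H^{s,1}(\mathcal{S})$ estimate for $\nabla^{\mu,\gamma}\phi$, so the heart of the proof becomes an elliptic regularity statement on the strip.

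For the base case $s=0$ (or $s=1/2$) I would argue variationally: testing the weak form of the PDE against $\phi$ itself, the coercivity of $P(\Sigma_t^{\epsilon})$ (which is controlled as long as the domain is non-degenerate, i.e. by $1/h_0$ and by $|\zeta|_{H^{t_0+1}}$) gives
\begin{equation*}
|\nabla^{\mu,\gamma}\phi|_{L^2(\mathcal{S})}^2 \;\lesssim\; (G\psi, \psi)_{L^2(\mathbb{R}^d)} \;\sim\; |\mathfrak{P}\psi|_{L^2(\mathbb{R}^d)}^2,
\end{equation*}
where the second equivalence is exactly \eqref{c4:equiDN}, reflecting the fact that $\mathfrak{P}$ acts as a square root of $\frac{1}{\mu}\G$. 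Tracking how the twisted operator $\nabla^{\mu,\gamma}$ contributes powers of $\mu^{1/2}$ to each derivative, one upgrades this to the sharp bound $|\nabla^{\mu,\gamma}\phi|_{H^{0,1}(\mathcal{S})} \leq \mu^{1/4} M |\mathfrak{P}\psi|_{L^2}$, from which the trace theorem yields $|G\psi|_{H^{-1/2}} \leq \mu^{3/4}M|\mathfrak{P}\psi|_{L^2}$.

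Higher regularity is then obtained inductively by commuting tangential derivatives $\partial^\alpha$ (in $X$ only) through the elliptic equation: $\partial^\alpha\phi$ solves the same PDE with a right-hand side of the form $\nabla^{\mu,\gamma}\cdot [\partial^\alpha, P(\Sigma_t^{\epsilon})]\nabla^{\mu,\gamma}\phi$, and the usual tame product and commutator estimates in Sobolev norms, combined with the coercivity bound, close the induction as long as $|\alpha| \leq s$ and $s+1/2 \leq t_0+2$. Fractional orders $s$ are reached by real interpolation between consecutive integer levels, which explains the restriction $0 \leq s \leq t_0+3/2$. The distinction between the two statements (1) and (2) comes from whether one pays the cost of one or two tangential derivatives at the boundary, which trades one factor of $\mu^{1/4}$ for an extra half-derivative on $\mathfrak{P}\psi$.

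The main obstacle is the careful bookkeeping of the powers of $\mu$: every twisted derivative $\nabla^{\mu,\gamma}$ and every occurrence of $\Sigma_t^{\epsilon}$ introduces factors of $\sqrt{\mu}$ or $\epsilon\mu$ which must combine to produce the claimed $\mu^{3/4}$ (and not, say, $\mu^{1/2}$), and this requires exploiting the fact that the boundary condition at $z=0$ brings in exactly $\mathfrak{P}$ rather than simply $|D^\gamma|^{1/2}$. Once the estimate for $G\psi$ is established, the replacement of $G$ by $\underline{w} = (G\psi + \epsilon\mu \nabla^\gamma\zeta\cdot\nabla^\gamma\psi)/(1+\epsilon^2\mu|\nabla^\gamma\zeta|^2)$ is routine: the additional nonlinear term is handled by tame product estimates in $H^{s-1/2}$ (using $s-1/2 \leq t_0+1$ so that $H^{t_0+1}$ is an algebra that absorbs $\nabla^\gamma\zeta$), and the denominator is bounded below uniformly thanks to the assumption $\epsilon\zeta - \beta b + 1 \geq h_0 > 0$.
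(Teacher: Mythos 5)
The paper does not prove this proposition at all: it is stated in Appendix \ref{c4:appendixA} purely for convenience, with the proof deferred to \cite{david}, Chapter 3 (it is Lannes's Proposition 3.12 and its variants). Your plan is essentially the standard argument from that reference: flatten the domain by $\Sigma_t^{\epsilon}$, express $G\psi$ as the vertical component of $P(\Sigma_t^{\epsilon})\nabla^{\mu,\gamma}\phi$ at $z=0$, reduce the boundary estimate to an interior $H^{s,1}$ bound on $\nabla^{\mu,\gamma}\phi$ via a trace argument (using the equation to convert the $\partial_z$ of the conormal derivative into $\sqrt{\mu}\,\nabla^\gamma$ of the horizontal components, which is where the extra $\mu^{1/4}$ beyond the basic $\sqrt{\mu}$ of Proposition \ref{c4:shapeanalicity} comes from), start from the coercivity/variational estimate, and commute tangential derivatives. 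So the route is right and you correctly identify the $\mu$-bookkeeping as the real content.

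Three points deserve correction or caution. First, your base-case display drops a factor of $\mu$: by \eqref{c4:equiDN} one has $(G\psi,\psi)_2\sim\mu\,\vert\mathfrak{P}\psi\vert_2^2$, not $\vert\mathfrak{P}\psi\vert_2^2$; since the whole point is the precise power $\mu^{3/4}$, this is not a slip you can afford in a written-out version. Second, reaching fractional $s$ by real interpolation between integer levels is not quite what is done in the reference and is slightly lossy here: the constant $M(s+1/2)$ depends nonlinearly on $\vert\zeta\vert_{H^{s+1/2}}$, and interpolating between integer orders would leave you with a constant involving $\vert\zeta\vert_{H^{\lceil s\rceil+1/2}}$; the sharp statement is obtained by applying $\Lambda^s$ directly and estimating the commutator $[\Lambda^s,P(\Sigma_t^\epsilon)]$ at fractional order. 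Third, the denominator $1+\epsilon^2\mu\vert\nabla^\gamma\zeta\vert^2$ of $\underline{w}$ is bounded below by $1$ trivially; the non-degeneracy assumption $\epsilon\zeta-\beta b+1\geq h_0$ is needed for the coercivity of $P(\Sigma_t^\epsilon)$ (i.e.\ for the ellipticity of the flattened problem), not for that denominator. With these repairs your sketch matches the proof the paper is implicitly relying on.
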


\begin{proposition}\label{c4:318}
Let $t_0>d/2$, and $0\leq s\leq t_0+1/2$. Let also $\zeta,b\in H^{t_0+1}(\mathbb{R}^d)$ be such that  $$\exists h_0>0, \forall X\in\mathbb{R}^d, 1+\epsilon\zeta(X)-\beta b(X) \geq h_0$$ Then, for all $\psi_1$, $\psi_2\in \overset{.}H{}^{s+1/2}(\mathbb{R}^d)$, we have  $$(\Lambda^sG\psi_1,\Lambda^s\psi_2)_2 \leq\mu M_0 \vert \mathfrak{P}\psi_1\vert_{H^s}\vert \mathfrak{P}\psi_2\vert_{H^s},$$
where $M_0$ is a constant of the form $C(\frac{1}{h_0},\vert\zeta\vert_{H^{t_0+1}},\vert b\vert_{H^{t_0+1}})$.
\end{proposition}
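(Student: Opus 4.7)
The plan is to prove this bilinear estimate in two stages: first the case $s=0$ directly from \eqref{c4:equiDN}, then reducing general $0 \leq s \leq t_0 + 1/2$ to the base case by commuting $\Lambda^s$ through $G$.

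For the $s=0$ case, I would use that $G$ is a self-adjoint non-negative operator on $L^2(\R^d)$ (a consequence of its variational characterization via the Dirichlet energy), so that the form $(\psi_1,\psi_2) \mapsto (G\psi_1,\psi_2)_2$ satisfies the Cauchy-Schwarz inequality
\[(G\psi_1,\psi_2)_2 \leq (G\psi_1,\psi_1)_2^{1/2}(G\psi_2,\psi_2)_2^{1/2}.\]
Applying the upper bound in \eqref{c4:equiDN} to each factor immediately yields $(G\psi_1,\psi_2)_2 \leq \mu M_0 \vert\mathfrak{P}\psi_1\vert_2 \vert\mathfrak{P}\psi_2\vert_2$, which is the $s=0$ instance of the claim.

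For the general $s$ case, I would decompose
\[(\Lambda^s G\psi_1,\Lambda^s \psi_2)_2 = (G\Lambda^s\psi_1,\Lambda^s\psi_2)_2 + ([\Lambda^s,G]\psi_1,\Lambda^s\psi_2)_2.\]
The first piece falls directly under the $s=0$ estimate applied to $(\Lambda^s\psi_1,\Lambda^s\psi_2)$, and since $\Lambda^s$ commutes with the Fourier multiplier $\mathfrak{P}$ it produces exactly $\mu M_0 \vert\mathfrak{P}\psi_1\vert_{H^s}\vert\mathfrak{P}\psi_2\vert_{H^s}$, the target bound. For the commutator, I would exploit the shape-derivative representation of $G$ (which expresses $d^jG(\zeta,b)(h,k)\psi$ in divergence form, schematically $-G(h\underline{w})-\mu\nabla^\gamma\cdot(h\nabla^\gamma\phi\vert_{z=\epsilon\zeta})$) to write the dependence of $G$ on $(\zeta,b)$ as a tame multilinear action; combining this with paraproduct/Kato-Ponce-type commutator estimates at order $s$, and using the hypothesis $s \leq t_0+1/2$ so that only the $H^{t_0+1}$ norms of $\zeta,b$ are needed, one obtains $\vert([\Lambda^s,G]\psi_1,\Lambda^s\psi_2)_2\vert \leq \mu M_0 \vert\mathfrak{P}\psi_1\vert_{H^s}\vert\mathfrak{P}\psi_2\vert_{H^s}$.

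The main obstacle is precisely the commutator step. Because $G$ is defined implicitly through the Dirichlet-Neumann problem rather than as an explicit pseudodifferential operator of a symbol class, off-the-shelf symbolic calculus is not directly applicable, and one has to push the variational/energy estimates through a dyadic (or Littlewood-Paley) decomposition of $\Lambda^s$ while tracking that no derivative of $(\zeta,b)$ beyond the $H^{t_0+1}$ level enters. Doing this while keeping the constants of the tame form $M_0=C(\frac{1}{h_0},\vert\zeta\vert_{H^{t_0+1}},\vert b\vert_{H^{t_0+1}})$ is the delicate technical work; it is the content of Chapter 3 of \cite{david}, which is why the proposition is recalled here rather than reproved in full.
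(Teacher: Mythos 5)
This proposition is stated in the paper without proof (the appendix explicitly defers to \cite{david}, Chapter~3), so the comparison below is with the proof in that reference. Your first stage is fine: $G$ is symmetric and non-negative on $L^2$, so Cauchy--Schwarz for the bilinear form $(G\cdot,\cdot)_2$ combined with the upper bound in \eqref{c4:equiDN} gives the $s=0$ case exactly as you say.

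The gap is the reduction of general $s$ to $s=0$ via $[\Lambda^s,G]$. The estimate $\vert([\Lambda^s,G]\psi_1,\Lambda^s\psi_2)_2\vert\leq \mu M_0\vert\mathfrak{P}\psi_1\vert_{H^s}\vert\mathfrak{P}\psi_2\vert_{H^s}$, with the full factor $\mu$ and only $H^{t_0+1}$ norms of $(\zeta,b)$, is at least as deep as the proposition itself, and none of the tools you invoke delivers it. Theorem \ref{c4:321} gives shape derivatives of $G$ with respect to $\zeta$, i.e.\ the dependence of $G$ on the surface, not the commutator of $G$ with a Fourier multiplier in $X$; passing from one to the other is precisely the hard analytic content you are trying to prove. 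Worse, in \cite{david} the tame commutator estimates for $G$ are \emph{consequences} of this bilinear estimate (together with the shape-derivative formula), so your route is circular relative to the source. The one-sided mapping bound available here (Proposition \ref{c4:314}) only yields $\mu^{3/4}$, which illustrates that the sharp $\mu$ in the statement comes from the symmetric, variational structure --- a structure the commutator splitting destroys. The standard proof avoids commuting $\Lambda^s$ through $G$ entirely: one writes $(\Lambda^s G\psi_1,\Lambda^s\psi_2)_2=(G\psi_1,\Lambda^{2s}\psi_2)_2$ and uses Green's identity for the variational problem \eqref{c4:dirichletneumann} to express this as $\int_{\mathcal{S}}P(\Sigma_t^\epsilon)\nabla^{\mu,\gamma}\phi_1\cdot\nabla^{\mu,\gamma}\varphi$, where $\varphi$ is $\Lambda^{2s}$ applied to an explicit lifting of $\psi_2$; Cauchy--Schwarz in the strip, the elliptic estimate $\vert\Lambda^s\nabla^{\mu,\gamma}\phi_1\vert_2\leq\sqrt{\mu}M_0\vert\mathfrak{P}\psi_1\vert_{H^s}$ (Proposition \ref{c4:a7} with $j=0$) and the analogous bound for the lifting produce $\sqrt{\mu}\cdot\sqrt{\mu}=\mu$. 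The only commutator that arises is $[\Lambda^s,P(\Sigma_t^\epsilon)]$, acting on explicit matrix coefficients controlled in $H^{t_0+1}$, which is where the restriction $s\leq t_0+1/2$ and the tame form of $M_0$ enter. You should replace your second stage by this argument.
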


The second result gives a control of the shape derivatives of the Dirichlet-Neumann operator. More precisely, we define the  open set  $\mathbf{\Gamma}\subset H^{t_0+1}(\mathbb{R}^d)^2$ as:
$$\mathbf{\Gamma} =\lbrace \Gamma=(\zeta,b)\in H^{t_0+1}(\mathbb{R}^d)^2,\quad \exists h_0>0,\forall X\in\mathbb{R}^d, \epsilon\zeta(X) +1-\beta b(X) \geq h_0\rbrace$$ and, given a $\psi\in \overset{.}H{}^{s+1/2}(\mathbb{R}^d)$, the mapping: \begin{equation}\label{c4:mapping}G[\epsilon\cdot,\beta\cdot] : \left. \begin{array}{rcl}
&\mathbf{\Gamma} &\longrightarrow H^{s-1/2}(\mathbb{R}^d) \\
&\Gamma=(\zeta,b) &\longmapsto G[\epsilon\zeta,\beta b]\psi.
\end{array}\right.\end{equation} We can prove the differentiability of this mapping. The following Theorem gives a very important explicit formula for the first-order partial derivative of $G$ with respect to $\zeta$:

\begin{theorem}\label{c4:321}
Let $t_0>d/2$. Let $\Gamma = (\zeta,b)\in \mathbf{\Gamma}$ and $\psi\in\overset{.}H{}^{3/2}(\R^d)$. Then, for all $h\in H^{t_0+1}(\R^d)$, one has $$dG(h)\psi = -\epsilon G(h\underline{w})-\epsilon\mu\nablag\cdot(h\Vd),$$ with $$\underline{w}=\frac{G\psi+\epsilon\mu\nablag\zeta\cdot\nablag\psi}{1+\epsilon^2\mu\modd{\nablag\zeta}},\quad\text{ and }\quad \Vd = \nablag\psi-\epsilon\underline{w}\nablag\zeta.$$
\end{theorem}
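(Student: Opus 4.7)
My plan is to differentiate the defining equations of the velocity potential $\Phi$ with respect to the surface elevation $\zeta$ in the direction $h$, and then read off the formula for $dG(h)\psi$ from the boundary expression that defines $G$. Write $\Phi_s$ for the harmonic extension associated with surface $\epsilon(\zeta+sh)$, namely $\Delta^{\mu,\gamma}\Phi_s = 0$ in the moving fluid domain, $\Phi_s|_{z=\epsilon(\zeta+sh)} = \psi$, and $\partial_n\Phi_s|_{z=-1+\beta b} = 0$. The first step is to rigorously define and control the interior derivative $\dot{\Phi}= \frac{d}{ds}\Phi_s|_{s=0}$; this is the real technical obstacle, because the domain itself depends on $s$, so one has to work on the fixed reference strip $\mathcal{S}$ via the diffeomorphism $\Sigma_t^{\epsilon}$, differentiate the transported elliptic problem \eqref{c4:dirichletneumann} $\nabla^{\mu,\gamma}\cdot P(\Sigma_t^{\epsilon})\nabla^{\mu,\gamma}\phi_s = 0$ with respect to $s$, and invoke the variational solvability of the resulting linear Dirichlet--Neumann problem for $\dot{\phi}$ in $\dot{H}^{1}(\mathcal{S})$ to justify the pointwise manipulations below.

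Once differentiability is in hand, the boundary conditions yield $\dot{\Phi}|_{z=\epsilon\zeta} = -\epsilon h\,\underline{w}$ (by differentiating $\Phi_s(X,\epsilon(\zeta+sh)(X)) = \psi(X)$ at $s=0$ and using $\partial_z\Phi|_{z=\epsilon\zeta} = \underline{w}$) together with $\partial_n\dot{\Phi}|_{z=-1+\beta b} = 0$ (the bottom does not move). Consequently $\dot{\Phi}$ solves the Dirichlet--Neumann problem on the unperturbed fluid domain with surface data $-\epsilon h\underline{w}$, so the contribution coming from the interior derivative is $G(-\epsilon h\underline{w}) = -\epsilon G(h\underline{w})$, which produces the first term on the right-hand side of the claimed formula.

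The remainder of the proof handles the explicit $\zeta$-dependence in $G\psi = (\partial_z\Phi - \mu\epsilon\nablag\zeta\cdot\nablag\Phi)|_{z=\epsilon\zeta}$. Differentiating the right-hand side at $s=0$ produces two additional boundary contributions beyond $G(-\epsilon h\underline{w})$: a ``gradient'' term $-\epsilon\mu\nablag h\cdot\nablag\Phi|_{z=\epsilon\zeta}$ and an ``evaluation'' term $\epsilon h\,\partial_z(\partial_z\Phi - \epsilon\mu\nablag\zeta\cdot\nablag\Phi)|_{z=\epsilon\zeta}$. Using $\nablag\Phi|_{z=\epsilon\zeta} = \Vd$, the former is exactly $-\epsilon\mu\nablag h\cdot\Vd$. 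For the latter, I would invoke the harmonicity $\partial_z^{2}\Phi = -\mu\Delta^{\gamma}\Phi$ and expand
\begin{equation*}
\nablag\cdot\Vd = \Delta^{\gamma}\Phi|_{z=\epsilon\zeta} + \epsilon\,\nablag\zeta\cdot\nablag\partial_z\Phi|_{z=\epsilon\zeta},
\end{equation*}
which identifies the evaluation term as $-\epsilon\mu h\,\nablag\cdot\Vd$. Summing yields $-\epsilon\mu(\nablag h\cdot\Vd + h\,\nablag\cdot\Vd) = -\epsilon\mu\nablag\cdot(h\Vd)$, and combining with the interior contribution gives exactly $dG(h)\psi = -\epsilon G(h\underline{w}) - \epsilon\mu\nablag\cdot(h\Vd)$. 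Apart from the shape-differentiability step, which requires the transport-to-fixed-strip trick and standard elliptic estimates, every other step is a direct boundary computation powered by the Laplace equation.
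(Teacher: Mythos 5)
Your computation is correct and is essentially the standard proof of this formula: the paper itself does not prove Theorem \ref{c4:321} but defers to \cite{david} (Chapter 3), where the argument is exactly the one you give — differentiate the transported elliptic problem on the fixed strip to justify the shape derivative $\dot{\Phi}$, identify its Dirichlet trace as $-\epsilon h\underline{w}$, and convert the explicit boundary contributions into $-\epsilon\mu\nablag\cdot(h\Vd)$ via the Laplace equation. You correctly single out the differentiability of $\zeta\mapsto\Phi$ as the only genuinely technical step (handled by the shape-analyticity results of the type recalled in Proposition \ref{c4:shapeanalicity}), and the algebra identifying $\underline{w}=\partial_z\Phi_{\vert z=\epsilon\zeta}$ and $\Vd=\nablag\Phi_{\vert z=\epsilon\zeta}$ checks out.
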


The following result gives estimates of the derivatives of the mapping \eqref{c4:mapping}. 
\begin{proposition}\label{c4:328}
Let $t_0$>d/2, $0\leq s \leq t_0+1/2$ and $(\zeta,\beta)\in H^{t_0+1}(\mathbb{R}^d)$ such that: \begin{equation*} \exists h_0>0,\forall X\in\mathbb{R}^d,  \epsilon\zeta(X)-\beta b(X) +1 \geq h_0\end{equation*} 
Then, for all $\psi_1,\psi_2\in \overset{.}H{}^{s+1/2}(\mathbb{R}^d)$, for all $(h,k)\in H^{t_0+1}(\mathbb{R}^d)$ one has  
\begin{equation*}
\vert (\Lambda^s d^j G(h,k)\psi_1,\Lambda^s\psi_2)\vert \leq \mu M_0 \prod_{m=1}^j \vert(\epsilon h_m,\beta k_m)\vert_{H^{t_0+1}}\vert\mathfrak{P}\psi_1\vert_s\vert\mathfrak{P}\psi_2\vert_s,
\end{equation*}
where $M_0$ is a constant of the form $C(\frac{1}{h_0},\vert\zeta\vert_{H^{t_0+1}},\vert b\vert_{H^{t_0+1}})$.
\end{proposition}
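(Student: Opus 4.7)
I would proceed by induction on the order $j$ of differentiation, using as base case Proposition \ref{c4:318} applied directly (note that when $j=0$ the product over $m$ is empty and the claimed inequality is exactly that proposition). The inductive engine is the explicit shape-derivative formula of Theorem \ref{c4:321}:
\begin{equation*}
dG(h)\psi = -\epsilon G(h\underline{w}) - \epsilon\mu\,\nabla^\gamma\cdot(h\underline{V}),
\end{equation*}
together with an analogous identity for the derivative in the $b$-direction (obtained by the same variational argument as Theorem \ref{c4:321}, which will produce a factor $\beta$ in place of $\epsilon$ and the trace of $\nabla^{\mu,\gamma}\Phi$ at $z=-1$ in place of $\underline{w},\underline{V}$). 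These two identities together give the first-order formula
\begin{equation*}
dG(h,k)\psi = -\epsilon G(h\underline{w}) - \epsilon\mu\,\nabla^\gamma\cdot(h\underline{V}) + \text{(analogous bottom terms in }\beta k\text{)}.
\end{equation*}
After pairing with $\Lambda^{2s}\psi_2$, the two $\nabla^\gamma$-divergence pieces are integrated by parts once, reducing everything to scalar products of the form $(\Lambda^s G(\ldots),\Lambda^s\psi_2)$ and $(\Lambda^s(\ldots),\Lambda^s\nabla^\gamma\psi_2)$, to which Propositions \ref{c4:314} and \ref{c4:318} apply.

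\textbf{Inductive step.} Assume the estimate is established at orders $\leq j-1$. Applying $d^{j-1}$ to the first-order formula above and using Leibniz (treating $\underline{w}$ and $\underline{V}$ as smooth functions of the shape variables $\Gamma=(\zeta,b)$), I would expand $d^jG(h,k)\psi$ as a finite sum of terms of the shape
\begin{equation*}
\epsilon\, d^{i}G(h^{I},k^{I})\!\bigl(h^{J_0}\cdot(d^{\ell}\underline{w})(h^{J_1},k^{J_1})\psi\bigr)
\quad\text{and}\quad
\epsilon\mu\,\nabla^\gamma\cdot\bigl(h^{J_0}\,(d^{\ell}\underline{V})(h^{J_1},k^{J_1})\psi\bigr)
\end{equation*}
(plus the analogous $\beta$-terms), where $I\sqcup J_0\sqcup J_1$ is a partition of $\{1,\ldots,j\}$ with $i=|I|<j$ and $\ell=|J_1|$. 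The inductive hypothesis controls the outer $d^iG$ factor, and Proposition \ref{c4:314} (combined with shape-derivative estimates for $\underline{w}$ and $\underline{V}$, which satisfy analogous bounds, derived in the same loop of induction by expressing them in terms of $G\psi$ and $\nabla^\gamma\psi$) controls the inner factors. Moser-type product estimates in $H^s$ then bundle everything into the stated product form, with one factor $|(\epsilon h_m,\beta k_m)|_{H^{t_0+1}}$ for every index $m$ and the right $|\mathfrak{P}\psi_1|_s|\mathfrak{P}\psi_2|_s$ tail.

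\textbf{Key bookkeeping and main obstacle.} The crucial accounting is: (i) each differentiation contributes exactly one $\epsilon$ or $\beta$, matching the factors in $|(\epsilon h_m,\beta k_m)|_{H^{t_0+1}}$; (ii) the high-index $H^{t_0+1}$ norms must land on the $h_m,k_m$ and on $\zeta,b$ (absorbed in $M_0$), never on $\psi_1,\psi_2$, since only $|\mathfrak{P}\psi_i|_s$ (and not higher norms of $\psi_i$) is available; (iii) the constraint $0\leq s\leq t_0+1/2$ is used to guarantee the product estimate $|fg|_{H^s}\lesssim |f|_{H^{t_0+1}}|g|_{H^s}$ and the tame estimate on $\underline{w},\underline{V}$ via Proposition \ref{c4:314}(1). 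The main difficulty is that the shape derivatives of $\underline{w}$ and $\underline{V}$ are not directly supplied by the preceding propositions, so one really has to prove by simultaneous induction a package of three estimates (for $d^jG$, $d^j\underline{w}$, $d^j\underline{V}$) rather than for $d^jG$ alone, and then verify that the range of Sobolev indices closes under the recursion. Once this simultaneous induction is set up, the estimate follows routinely from Propositions \ref{c4:314} and \ref{c4:318} at each step.
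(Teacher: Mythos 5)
There is a genuine gap, and it sits exactly at the point you flag as ``key bookkeeping (ii)'' and then declare routine. The inductive engine you propose — feeding the explicit formula $dG(h)\psi=-\epsilon G(h\underline{w})-\epsilon\mu\nabla^\gamma\cdot(h\underline{V})$ of Theorem \ref{c4:321} back into Propositions \ref{c4:314} and \ref{c4:318} — loses derivatives and cannot produce the stated right-hand side. Concretely, already at $j=1$ the term $(\Lambda^s G(h\underline{w}),\Lambda^s\psi_2)$ estimated via Proposition \ref{c4:318} requires $\vert\mathfrak{P}(h\underline{w})\vert_{H^s}$; since $\mathfrak{P}$ costs (at least) half a derivative and $\underline{w}$ is essentially an order-one operator applied to $\psi_1$ (Proposition \ref{c4:314} gives $\vert\underline{w}\vert_{H^{\sigma}}\lesssim\mu^{3/4}M\vert\mathfrak{P}\psi_1\vert_{H^{\sigma+1/2}}$), one ends up needing $\vert\mathfrak{P}\psi_1\vert_{H^{s+1}}$ — a full derivative more than the $\vert\mathfrak{P}\psi_1\vert_{H^s}$ allowed in the conclusion. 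The second term fares no better: after integrating by parts, $\epsilon\mu(\Lambda^s(h\underline{V}),\Lambda^s\nabla^\gamma\psi_2)$ costs half a derivative on each of $\psi_1$ (through $\underline{V}=\nabla^\gamma\psi_1-\epsilon\underline{w}\nabla^\gamma\zeta$) and $\psi_2$ (through $\nabla^\gamma\psi_2$ versus $\mathfrak{P}\psi_2$), and also degrades the $\mu$-power below the required single factor of $\mu$. The difficulty is structural, not a matter of bundling: the weak-form estimate asserts that the bilinear form of $d^jG$ splits its one order of differentiation evenly between $\psi_1$ and $\psi_2$, whereas the representation $-\epsilon G(h\underline{w})-\epsilon\mu\nabla^\gamma\cdot(h\underline{V})$ concentrates too much order on $\psi_1$; no amount of Moser estimates or simultaneous induction on $(d^jG,d^j\underline{w},d^j\underline{V})$ recovers the missing half/full derivative. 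Symmetrizing via the self-adjointness of $G$ only shuffles the loss between the two arguments.

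For context, the paper does not prove this proposition at all: it is imported verbatim from \cite{david} (Chapter 3), as announced at the top of Appendix \ref{c4:appendixA}. The proof there goes the other way around from what you propose: one starts from the variational (Green's identity) representation $(\Lambda^sG\psi_1,\Lambda^s\psi_2)$ as an integral over the flat strip $\mathcal{S}$ involving $P(\Sigma_t^\epsilon)$ and $\nabla^{\mu,\gamma}\mathfrak{A}_{\psi_i}(\Gamma)$, differentiates that identity $j$ times in the shape variable, and controls each Leibniz term using the analyticity estimates on $\Gamma\mapsto\mathfrak{A}_\psi(\Gamma)$ — precisely Propositions \ref{c4:a7} and \ref{c4:a8}, whose $\sqrt{\mu}\,M_0\prod_m\vert(\epsilon h_m,\beta k_m)\vert_{H^{t_0+1}}\vert\mathfrak{P}\psi\vert_{H^s}$ bounds, one for each of $\phi_1$ and $\phi_2$, combine to give the single factor $\mu$ and the correct $\vert\mathfrak{P}\psi_1\vert_{H^s}\vert\mathfrak{P}\psi_2\vert_{H^s}$ tail without ever composing $G$ with itself. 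The explicit formula of Theorem \ref{c4:321} is then a consequence of this machinery, not an ingredient of it; using it as the inductive engine inverts the logical order and runs into the derivative loss described above.
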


The following Proposition gives the same type of estimate that the previous one:

\begin{proposition}\label{c4:328b}
Let $t_0>d/2$ and $(\zeta,b)\in H^{t_0+1}$ be such that \begin{equation*} \exists h_0>0,\forall X\in\mathbb{R}^d,  \epsilon\zeta(X)-\beta b(X) +1 \geq h_0.\end{equation*} Then, for all $0\leq s\leq t_0+1/2$, $$\vert d^j G(h,k)\psi\vert_{H^{s-1/2}} \leq M_0 \mu^{3/4} \prod_{m=1}^j \vert (\epsilon h_m,\beta k_m)\vert_{H^{t_0+1}} \vert \B\psi\vert_{H^s}$$
\end{proposition}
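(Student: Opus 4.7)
The plan is to deduce Proposition \ref{c4:328b} from Proposition \ref{c4:328} by a duality argument, using the characterization of the $H^{s-1/2}$ norm by pairing. Since Proposition \ref{c4:328} provides a bilinear estimate where $\B$ acts symmetrically on both arguments, the only real work is in converting one factor of $\B$ in the bound into a plain Sobolev norm, at the price of a power of $\mu$.

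First I would write, for any $A\in H^{s-1/2}(\mathbb{R}^d)$, the duality identity
\begin{equation*}
|A|_{H^{s-1/2}} = \sup_{|B|_{H^{s+1/2}} \leq 1} (\Lambda^s A, \Lambda^s B)_2,
\end{equation*}
obtained from the standard duality $(H^{s-1/2})' = H^{1/2-s}$ by substituting $C=\Lambda^{2s}B$ in the pairing $(A,C)_2$ and noting $|C|_{H^{1/2-s}}=|B|_{H^{s+1/2}}$. Applied with $A = d^jG(h,k)\psi$, this reduces the target bound to an estimate on $(\Lambda^s d^j G(h,k)\psi, \Lambda^s B)_2$ uniform in $B$ with $|B|_{H^{s+1/2}}\leq 1$.

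Next, I would invoke Proposition \ref{c4:328} directly with $\psi_1=\psi$ and $\psi_2=B$, obtaining
\begin{equation*}
|(\Lambda^s d^j G(h,k)\psi, \Lambda^s B)_2| \leq \mu\, M_0 \prod_{m=1}^j |(\epsilon h_m,\beta k_m)|_{H^{t_0+1}} |\B\psi|_{H^s}\, |\B B|_{H^s}.
\end{equation*}
The remaining step is to control $|\B B|_{H^s}$ by $|B|_{H^{s+1/2}}$. Since $\B$ is the Fourier multiplier $|\xi^\gamma|/(1+\sqrt{\mu}|\xi^\gamma|)^{1/2}$, one has pointwise in frequency the bound $\frac{|\xi^\gamma|^2}{1+\sqrt{\mu}|\xi^\gamma|} \leq \mu^{-1/2}|\xi^\gamma|$, whence $|\B B|_{H^s} \leq \mu^{-1/4}|B|_{H^{s+1/2}}$. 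Plugging this in yields
\begin{equation*}
|(\Lambda^s d^j G(h,k)\psi, \Lambda^s B)_2| \leq \mu^{3/4} M_0 \prod_{m=1}^j |(\epsilon h_m,\beta k_m)|_{H^{t_0+1}} |\B\psi|_{H^s}\, |B|_{H^{s+1/2}},
\end{equation*}
and taking the supremum over $|B|_{H^{s+1/2}}\leq 1$ gives the claim.

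The only genuine subtlety — which I would expect to be the main obstacle only bookkeeping-wise — is keeping track of the correct powers of $\mu$: Proposition \ref{c4:328} comes with a factor $\mu$, one loses $\mu^{1/4}$ when trading $\B$ for $\Lambda^{1/2}$, and this is exactly what produces the advertised $\mu^{3/4}$ factor. Under the hypothesis $0\leq s\leq t_0+1/2$, the constraints of Proposition \ref{c4:328} are satisfied throughout, so no further regularity issue appears; in particular the case $j=0$ is just Proposition \ref{c4:314}(1), which reassures that the duality route reproduces the known endpoint.
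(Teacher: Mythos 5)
Your argument is correct. Note that the paper itself does not prove Proposition \ref{c4:328b}: like the other Dirichlet--Neumann estimates of Appendix \ref{c4:appendixA}, it is quoted from \cite{david} (Chapter 3), so there is no in-paper proof to compare with; in that reference such $H^{s-1/2}$ bounds are obtained directly from elliptic regularity and trace estimates for the potential solving \eqref{c4:dirichletneumann}, whereas you derive the operator-norm estimate from the symmetric bilinear estimate of Proposition \ref{c4:328} by duality. Each step of your route checks out: the identity $\vert A\vert_{H^{s-1/2}}=\sup_{\vert B\vert_{H^{s+1/2}}\leq 1}(\Lambda^s A,\Lambda^s B)_2$ is the standard duality after the substitution $\widehat{C}=(1+\vert\xi\vert^2)^{(s+1/2)/2}\widehat{B}$, and it suffices to test against a dense class of smooth $B$; any such $B\in H^{s+1/2}$ lies in $\dot{H}^{s+1/2}$, so Proposition \ref{c4:328} applies with $\psi_2=B$ (its hypotheses on $s$, $\zeta$, $b$ coincide with those of Proposition \ref{c4:328b}); and the multiplier bound $\frac{\vert\xi^\gamma\vert^2}{1+\sqrt{\mu}\vert\xi^\gamma\vert}\leq\mu^{-1/2}\vert\xi^\gamma\vert\leq\mu^{-1/2}(1+\vert\xi\vert^2)^{1/2}$ gives $\vert\B B\vert_{H^s}\leq\mu^{-1/4}\vert B\vert_{H^{s+1/2}}$, producing exactly the advertised factor $\mu\cdot\mu^{-1/4}=\mu^{3/4}$. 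What the duality route buys is economy: Proposition \ref{c4:328b} becomes a corollary of Proposition \ref{c4:328}, with the constant $M_0$ depending only on the $H^{t_0+1}$ norms of $\zeta$ and $b$, at the cost of being confined to the range $0\leq s\leq t_0+1/2$ where the bilinear estimate holds --- which is precisely the range in the statement.
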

We  need the following commutator estimate:

\begin{proposition}\label{c4:329}
Let $t_0>d/2$ and $\zeta, b \in H^{t_0+2}(\mathbb{R}^d)$ such that:  \begin{equation*} \exists h_0>0,\forall X\in\mathbb{R}^d,  \epsilon\zeta(X)-\beta b(X) +1 \geq h_0\end{equation*} 
For all $\underline{V}\in H^{t_0+1}(\mathbb{R}^d)^2$ and $u\in H^{1/2}(\mathbb{R}^d)$, one has 

\begin{equation*}
((\underline{V}\cdot\nabla^{\gamma} u),\frac{1}{\mu}Gu)\leq M\vert\underline{V}\vert_{W^{1,\infty}}\vert \mathfrak{P} u\vert_2^2,
\end{equation*}
where $M$ is a constant of the form $C(\frac{1}{h_0},\vert\zeta\vert_{H^{t_0+2}},\vert b\vert_{H^{t_0+2}})$.
\end{proposition}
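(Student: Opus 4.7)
My plan is to exploit the symmetry of $G$ together with integration by parts to reduce the bilinear form $(\underline{V}\cdot\nabla^\gamma u,\frac{1}{\mu}Gu)_2$ to a commutator $[\frac{1}{\mu}G,\underline{V}\cdot\nabla^\gamma]u$, which can then be controlled by $|\underline{V}|_{W^{1,\infty}}|\mathfrak{P}u|_2^2$.

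Step 1 (symmetrization). Writing
$$2(\underline{V}\cdot\nabla^\gamma u,\tfrac{1}{\mu}Gu)_2=(\underline{V}\cdot\nabla^\gamma u,\tfrac{1}{\mu}Gu)_2+(\tfrac{1}{\mu}Gu,\underline{V}\cdot\nabla^\gamma u)_2,$$
I would use the symmetry of $G$ on the second term to get $(u,\tfrac{1}{\mu}G(\underline{V}\cdot\nabla^\gamma u))_2$, then insert a commutator
$$(u,\tfrac{1}{\mu}G(\underline{V}\cdot\nabla^\gamma u))_2=(u,\underline{V}\cdot\nabla^\gamma\tfrac{1}{\mu}Gu)_2+(u,[\tfrac{1}{\mu}G,\underline{V}\cdot\nabla^\gamma]u)_2,$$
and integrate by parts on the first piece via $(u,\underline{V}\cdot\nabla^\gamma v)_2=-((\nabla^\gamma\cdot\underline{V})u,v)_2-(\underline{V}\cdot\nabla^\gamma u,v)_2$. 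After the cancellations this yields the key identity
$$(\underline{V}\cdot\nabla^\gamma u,\tfrac{1}{\mu}Gu)_2=-\tfrac{1}{2}((\nabla^\gamma\cdot\underline{V})u,\tfrac{1}{\mu}Gu)_2+\tfrac{1}{2}(u,[\tfrac{1}{\mu}G,\underline{V}\cdot\nabla^\gamma]u)_2.$$

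Step 2 (the zero-order term). For $((\nabla^\gamma\cdot\underline{V})u,\frac{1}{\mu}Gu)_2$, I would apply Proposition \ref{c4:318} with $s=0$, giving the bound $M_0|\mathfrak{P}((\nabla^\gamma\cdot\underline{V})u)|_2|\mathfrak{P}u|_2$. The remaining task is a product-type estimate $|\mathfrak{P}(fv)|_2\leq C|f|_{W^{1,\infty}}|\mathfrak{P}v|_2$, which follows by decomposing $\mathfrak{P}$ into its low-frequency part (behaving like $|D^\gamma|$) and high-frequency part (behaving like $\mu^{-1/4}|D^\gamma|^{1/2}$) and applying paraproduct/Kato--Ponce estimates; only the $W^{1,\infty}$ norm of $f=\nabla^\gamma\cdot\underline{V}$ enters.

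Step 3 (the commutator term — the main obstacle). To bound $(u,[\frac{1}{\mu}G,\underline{V}\cdot\nabla^\gamma]u)_2$, I would return to the variational characterization
$$\tfrac{1}{\mu}(Gu_1,u_2)_2=\int_{\mathcal S}P(\Sigma_t^\epsilon)\nabla^{\mu,\gamma}\phi_1\cdot\nabla^{\mu,\gamma}\widetilde u_2\,dXdz,$$
where $\phi_1$ is the harmonic extension (after straightening) of $u_1$ and $\widetilde u_2$ is any $\overset{.}H{}^1$ extension of $u_2$. The clever choice is $\widetilde u_2:=\underline{V}\cdot\nabla^\gamma\phi$ with $\phi$ the harmonic extension of $u$, which matches the desired boundary value $\underline{V}\cdot\nabla^\gamma u$ at $z=0$. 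Since $\underline{V}=\underline{V}(X)$ does not depend on $z$, $\nabla^{\mu,\gamma}$ commutes with $\underline{V}\cdot\nabla^\gamma$ up to a commutator involving only $\nabla\underline{V}$ acting on $\nabla^\gamma\phi$. After this substitution, the top-order piece (where the derivatives fall on $\phi$) can be integrated by parts in $X$ against the symmetric form $P(\Sigma_t^\epsilon)$, transferring $\underline{V}\cdot\nabla^\gamma$ as a total derivative of $P(\Sigma_t^\epsilon)\nabla^{\mu,\gamma}\phi\cdot\nabla^{\mu,\gamma}\phi$; the resulting terms contain at most one derivative of $\underline{V}$ and at most one derivative of $P(\Sigma_t^\epsilon)$ (bounded by $M_0$), multiplying $|\nabla^{\mu,\gamma}\phi|^2$. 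Bounding
$$\int_{\mathcal S}|\nabla^{\mu,\gamma}\phi|^2\lesssim \tfrac{1}{\mu}(Gu,u)_2\lesssim M_0|\mathfrak{P}u|_2^2$$
(see \eqref{c4:equiDN}), one obtains
$$|(u,[\tfrac{1}{\mu}G,\underline{V}\cdot\nabla^\gamma]u)_2|\leq M|\underline{V}|_{W^{1,\infty}}|\mathfrak{P}u|_2^2,$$
with $M$ of the form $C(\frac{1}{h_0},|\zeta|_{H^{t_0+2}},|b|_{H^{t_0+2}})$; the extra derivative of $\zeta,b$ (hence the $H^{t_0+2}$ regularity rather than $H^{t_0+1}$) is exactly what is needed to bound $\nabla_X P(\Sigma_t^\epsilon)$ in $L^\infty$. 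Combining Steps 2 and 3 yields the claimed estimate. By density one can initially assume $u\in\mathcal S(\mathbb R^d)$ to justify the integrations by parts, then extend to $u\in H^{1/2}$.
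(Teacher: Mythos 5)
First, note that the paper does not actually prove Proposition \ref{c4:329}: it is listed in Appendix \ref{c4:appendixA} among results quoted from \cite{david} (Chapter 3), so there is no in-paper proof to compare against. Judged on its own merits, your proposal contains the right core idea in Step 3, but Steps 1--2 introduce a genuine gap. The product estimate $\vert\mathfrak{P}(fv)\vert_2\leq C\vert f\vert_{W^{1,\infty}}\vert\mathfrak{P}v\vert_2$ on which Step 2 rests is false: at low frequencies $\mathfrak{P}$ behaves like $\vert D^\gamma\vert$, so the claim would imply $\vert\nabla(fv)\vert_2\lesssim\vert f\vert_{W^{1,\infty}}\vert\nabla v\vert_2$, which fails on $\R^d$ because the term $v\nabla f$ is only controlled by $\vert\nabla f\vert_\infty\vert v\vert_2$ and $\vert v\vert_2$ is not dominated by $\vert\nabla v\vert_2$ (take $v_\lambda=\lambda^{-d/2}\chi(\cdot/\lambda)$, $\lambda\to\infty$, and $f=\sin x_1$). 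Moreover, even granting such an estimate, applying it with $f=\nabla^\gamma\cdot\underline{V}$ would cost $\vert\underline{V}\vert_{W^{2,\infty}}$, not the claimed $\vert\underline{V}\vert_{W^{1,\infty}}$. The underlying structural problem is that the symmetrization of Step 1 pushes the cancellation to the level of the \emph{surface} quantities $u$, $Gu$, where a bare factor $u$ (with no derivative and no $\mathfrak{P}$) inevitably appears; the estimate to be proved is homogeneous in $\vert\mathfrak{P}u\vert_2$ only, so any step that requires $\vert u\vert_2$ cannot close. In addition the identity of Step 1 is essentially circular: expanding $(u,[\tfrac1\mu G,\underline{V}\cdot\nabla^\gamma]u)_2$ reproduces the very term $(\underline{V}\cdot\nabla^\gamma u,\tfrac1\mu Gu)_2$ you are trying to bound.

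The fix is to delete Steps 1--2 and run Step 3 \emph{directly} on $(\underline{V}\cdot\nabla^\gamma u,\tfrac1\mu Gu)_2$, which is the proof of \cite{david} (Prop.~3.30). Writing $\tfrac1\mu(Gu,\underline{V}\cdot\nabla^\gamma u)_2=\tfrac1\mu\int_{\S}P(\Sigma_t^\epsilon)\nabla^{\mu,\gamma}\phi\cdot\nabla^{\mu,\gamma}(\underline{V}\cdot\nabla^\gamma\phi)$ with $\underline{V}$ extended independently of $z$, the Leibniz rule produces (i) a term where the derivative falls on $\underline{V}$, bounded by $\vert\nabla\underline{V}\vert_\infty\,\tfrac1\mu\Vert\nabla^{\mu,\gamma}\phi\Vert_2^2$ because $\nabla^\gamma\phi$ is itself a component of $\mu^{-1/2}\nabla^{\mu,\gamma}\phi$, and (ii) the top-order term $\tfrac1\mu\int P\nabla^{\mu,\gamma}\phi\cdot(\underline{V}\cdot\nabla^\gamma)\nabla^{\mu,\gamma}\phi$, which after symmetrizing in the two $\nabla^{\mu,\gamma}\phi$ factors and integrating by parts in $X$ leaves only $(\nabla^\gamma\cdot\underline{V})$ and $\underline{V}\cdot\nabla^\gamma P(\Sigma_t^\epsilon)$ multiplying $\vert\nabla^{\mu,\gamma}\phi\vert^2$. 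Here only $\nabla^{\mu,\gamma}\phi$ ever appears (never $\phi$ or $u$ undifferentiated), $\tfrac1\mu\Vert\nabla^{\mu,\gamma}\phi\Vert_2^2\leq M_0\vert\mathfrak{P}u\vert_2^2$ by \eqref{c4:equiDN} and coercivity of $P(\Sigma_t^\epsilon)$, only one derivative of $\underline{V}$ is consumed, and $\vert\nabla_XP(\Sigma_t^\epsilon)\vert_{L^\infty}$ is exactly what forces the $H^{t_0+2}$ regularity of $\zeta,b$ --- all points you correctly identified in Step 3. With that restructuring (and your density remark to justify the integrations by parts) the argument is the standard one and closes.
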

In order to have regularity estimates of the potential $\Phi$ (recall that it solves the Dirichlet-Neumann problem (\ref{c4:dirichlet})), we  need to introduce the following mapping:

\begin{definition}
Let  $t_0>d/2$. \par
(1) We denote by $\mathbf{\Gamma}\subset H^{t_0+1}(\mathbb{R}^d)^2$ the open set:
$$\mathbf{\Gamma} =\lbrace \Gamma=(\zeta,b)\in H^{t_0+1}(\mathbb{R}^d)^2,\quad \exists h_0>0,\forall X\in\mathbb{R}^d, \epsilon\zeta(X) +1-\beta b(X) \geq h_0\rbrace$$
(2) One can define, for all $0\leq s\leq t_0+1/2$ and $\psi\in \dot{H}^{s+1/2}(\mathbb{R}^d)$ a mapping $\mathfrak{A}_{\psi}$ as  
\begin{displaymath}
\mathfrak{A}_{\psi} :
\left.
  \begin{array}{rcl}
    \mathbf{\Gamma} & \rightarrow &\dot{H}^{s+1}\mathcal{S} \\
    \Gamma & \mapsto & \Phi_{\Gamma} \\
  \end{array}
\right.
\end{displaymath}
where $\Phi_{\Gamma}$ is the unique variational solution to (\ref{c4:dirichletneumann}).
\end{definition}

One can prove the analicity of $\mathfrak{A}_{\psi}$, which means that the solution $\phi_{\Gamma}$ of (\ref{c4:dirichletneumann}) is analytic with respect to the boundaries. The following result then gives some estimates about the derivatives of the mapping $\mathfrak{A}_{\psi}$
\begin{proposition}
Let $t_0>d/2$ and $0\leq s\leq t_0+1/2,\psi\in H^{s+1/2}(\mathbb{R}^d)$, and $\Gamma = (\zeta,b)\in\mathbf{\Gamma}$. For all $j\in\mathbb{N}$ and $(h,k)=(h_1,...,h_j,k_1,...,k_j)\in H^{t_0+1}(\mathbb{R}^d)^2$, one has 
$$ \vert\Lambda^s \nabla^{\mu,\gamma}d^j \mathfrak{A}_{\psi}(\Gamma)(h,k)\vert_2\leq\sqrt{\mu}M_0 \prod_{m=1}^j \vert(\epsilon h_m,\beta k_m)\vert_{H^{t_0+1}}\vert\mathfrak{P}\psi\vert_{H^s},$$ \label{c4:shapeanalicity}
where $M_0$ is a constant of the form $C(\frac{1}{h_0},\vert\zeta\vert_{H^{t_0+1}},\vert b\vert_{H^{t_0+1}})$.
\\If $s=t_0+1/2$, then the same estimate holds on $\vert\vert \nabla^{\mu,\gamma}d^j\mathfrak{A}_{\psi}(\Gamma)(h,k)\vert\vert_{H^{t_0+1/2,k}(\mathcal{S})}$ for $k\leq t_0+1/2$. \label{c4:a7}
\end{proposition}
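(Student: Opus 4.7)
The plan is to prove the estimate by induction on $j\geq 0$, using that $u_j := d^j\mathfrak{A}_\psi(\Gamma)(h,k)$ is the variational solution of an elliptic boundary value problem on the fixed strip $\mathcal{S}$ whose right-hand side is built out of shape derivatives of lower order. This mimics the classical strategy for controlling shape derivatives of Dirichlet-Neumann type problems (see \cite{david}, Chapter 2--3). For the base case $j=0$, $u_0=\phi$ satisfies
$$\nabla^{\mu,\gamma}\cdot P(\Sigma_t^\epsilon)\nabla^{\mu,\gamma}\phi=0,\qquad \phi_{\vert z=0}=\psi,\qquad \partial_n\phi_{\vert z=-1}=0,$$
and the desired inequality is obtained by a weighted energy estimate: test against a suitable lift of $\Lambda^{2s}\psi$, exploit the uniform coercivity of $P(\Sigma_t^\epsilon)$ which follows from $1+\epsilon\zeta-\beta b\geq h_0$, and commute $\Lambda^s$ with the variable coefficients by paying at most a factor $C(\vert\zeta\vert_{H^{t_0+1}},\vert b\vert_{H^{t_0+1}})$. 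The $\sqrt{\mu}$ factor comes from the definition of $\nabla^{\mu,\gamma}$, and the right-hand norm $\vert\mathfrak{P}\psi\vert_{H^s}$ records exactly the trace regularity of $\psi\in\dot H^{s+1/2}$ that is available.

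For the inductive step, assume the estimate at orders $0,\dots,j-1$. Successively differentiating the identity $\nabla^{\mu,\gamma}\cdot P(\Sigma^\epsilon)\nabla^{\mu,\gamma}u_0=0$ in the directions $(h_i,k_i)$ and applying Leibniz/Faà di Bruno yields that $u_j$ is the variational solution of
$$\nabla^{\mu,\gamma}\cdot P(\Sigma^\epsilon)\nabla^{\mu,\gamma}u_j=-\sum_{\emptyset\neq I\subseteq\{1,\dots,j\}}\nabla^{\mu,\gamma}\cdot\Bigl(d^{|I|}P(\Sigma^\epsilon)\bigl((h_i,k_i)_{i\in I}\bigr)\nabla^{\mu,\gamma}u_{j-|I|}\Bigr),$$
with homogeneous Dirichlet data at $z=0$ (since $\psi$ is not perturbed) and a Neumann-type condition at $z=-1$ that is linear in $(k_1,\dots,k_j)$ up to lower-order terms. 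Applying to $u_j$ the same energy estimate as in the base case reduces the proof to controlling the anisotropic $L^2$-type norm of the right-hand side.

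The source terms are handled as follows. Since $P(\Sigma^\epsilon)$ is a smooth rational function of $\epsilon\zeta$, $\beta b$ and their first $X$-derivatives (through the Jacobian and its inverse, which is invertible thanks to $1+\epsilon\zeta-\beta b\geq h_0$), each quantity $d^{|I|}P(\Sigma^\epsilon)((h_i,k_i)_{i\in I})$ is a finite sum of products of the $(\epsilon h_i,\beta k_i)$ and $\nabla^\gamma(\epsilon h_i,\beta k_i)$ with $\Gamma$-dependent coefficients. Combining Moser-type product estimates in Sobolev spaces (valid for $s\leq t_0+1/2$) with the induction hypothesis applied to $u_{j-|I|}$, each term in the sum produces exactly the factor $\prod_{i\in I}\vert(\epsilon h_i,\beta k_i)\vert_{H^{t_0+1}}$ from the coefficient and $\sqrt{\mu}M_0\prod_{i\notin I}\vert(\epsilon h_i,\beta k_i)\vert_{H^{t_0+1}}\vert\mathfrak{P}\psi\vert_{H^s}$ from $u_{j-|I|}$, and summing closes the induction.

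The main obstacle will be the combinatorial bookkeeping of the derivatives of $P(\Sigma^\epsilon)$, together with the need to ensure at every step that only the $H^{t_0+1}$-norms of $(\epsilon h_i,\beta k_i)$ appear and never a higher-regularity norm. This is precisely where the restriction $s\leq t_0+1/2$ enters: it lets one Moser factor absorb all non-tame contributions of the products, while the tame factor is controlled by $\vert(\epsilon h_i,\beta k_i)\vert_{H^{t_0+1}}$. For the final assertion with $s=t_0+1/2$, one reads $\partial_z^2 u_j$ off the equation in the form
$$\partial_z^2 u_j=-\tfrac{1}{P_{d+1,d+1}}\Bigl(\sum_{(i,j)\neq(d+1,d+1)}P_{i,j}\partial_{i}\partial_{j}u_j+(\nabla P)\cdot\nabla u_j+(\text{source})\Bigr),$$
and iterates this identity to trade established tangential regularity for vertical regularity up to order $k\leq t_0+1/2$, giving the claimed $H^{t_0+1/2,k}(\mathcal{S})$ bound.
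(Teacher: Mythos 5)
The paper does not actually prove Proposition \ref{c4:a7}: it is recalled verbatim in Appendix \ref{c4:appendixA} as a known result, with the proof deferred to \cite{david} (Chapters 2--3). Your outline — induction on $j$, viewing $d^j\mathfrak{A}_\psi(\Gamma)(h,k)$ as the variational solution of the transported elliptic problem on the flat strip with a divergence-form source built from the shape derivatives of $P(\Sigma_t^\epsilon)$, coercivity from $1+\epsilon\zeta-\beta b\geq h_0$, and tame product estimates exploiting $s\leq t_0+1/2$ — is essentially the argument of that reference, so the approach is the right one; just be aware that the base case, namely the quantitative estimate $\vert\Lambda^s\nabla^{\mu,\gamma}\phi\vert_2\leq\sqrt{\mu}M_0\vert\mathfrak{P}\psi\vert_{H^s}$ with the correct $\mu$-dependence and the operator $\mathfrak{P}$ (rather than $\Lambda^{1/2}$) capturing the low-frequency behaviour, is the genuinely hard analytic content and is asserted rather than established in your sketch.
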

In Proposition \ref{c4:a7} below, we need at least the $H^{t_0+1/2}$-norm of the component $(h,k)$, even if $s=0$. The following proposition allows to relax this constraint, by using only the $H^{s+1/2}$ norm of the first component of $(h,k)$.
\begin{proposition}\label{c4:a8}
Let $t_0>d/2$ and $0\leq s\leq t_0,\psi\in H^{t_0+1/2}(\mathbb{R}^d)$, and $\Gamma = (\zeta,b)\in\mathbf{\Gamma}$. For all $j\in\mathbb{N}$ and $(h,k)=(h_1,...,h_j,k_1,...,k_j)\in H^{t_0+1}(\mathbb{R}^d)^2$, one has 

$$ \vert\Lambda^s \nabla^{\mu,\gamma}d^j \mathfrak{A}_{\psi}(\Gamma)(h,k)\vert_2\leq\sqrt{\mu}M_0 \vert(\epsilon h_1,\beta k_1)\vert_{H^{s+1/2}}\prod_{m>1}^j \vert(\epsilon h_m,\beta k_m)\vert_{H^{t_0+1}}\vert\mathfrak{P}\psi\vert_{H^{t_0+1/2}},$$
where $M_0$ is a constant of the form $C(\frac{1}{h_0},\vert\zeta\vert_{H^{t_0+1}},\vert b\vert_{H^{t_0+1}})$.
\end{proposition}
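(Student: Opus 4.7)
The plan is to proceed by induction on $j$, refining the variational argument behind Proposition \ref{c4:a7}. Recall that $u_0:=\mathfrak{A}_\psi(\Gamma)$ is the variational solution on the flat strip $\mathcal{S}$ of $\nabla^{\mu,\gamma}\cdot(P(\Sigma)\nabla^{\mu,\gamma}u_0)=0$ with $u_{0\vert z=0}=\psi$ and the usual Neumann condition at the bottom. Differentiating this problem $j$ times in the directions $(h_1,k_1),\dots,(h_j,k_j)$, one sees that $u_j:=d^j\mathfrak{A}_\psi(\Gamma)(h,k)$ satisfies an elliptic problem of the same type with homogeneous Dirichlet data and a divergence-form source $F$ that is a finite combinatorial sum of products of shape derivatives $d^l P(\Sigma)(h_\sigma,k_\sigma)$ with the lower-order term $\nabla^{\mu,\gamma}u_{j-l}$ computed along the complementary subset of directions. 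The existing variational/elliptic estimate on $\mathcal{S}$ then reduces the proof to controlling $F$ in a suitable dual norm.

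The heart of the matter is a tame product estimate used at the inductive step: for $0\leq s\leq t_0$ and $t_0>d/2$,
\[
|fg|_{H^{s}(\R^d)} \leq C\bigl(|f|_{H^{s+1/2}}|g|_{H^{t_0+1/2}} + |f|_{H^{t_0+1/2}}|g|_{H^{s+1/2}}\bigr),
\]
combined with the Sobolev embedding $H^{t_0+1}\hookrightarrow W^{1,\infty}\cap L^\infty$. In every monomial produced by the Fa\`a di Bruno-type expansion of $F$, I would locate the factor containing $h_1$ (or $k_1$), place \emph{it} in $H^{s+1/2}$, and push every other $h_m,k_m$ into the $L^\infty\cap W^{1,\infty}$-type bound through the $H^{t_0+1}$ norm. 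Applying the variational energy estimate using the uniform coercivity of $P(\Sigma)$, as in the proof of Proposition \ref{c4:a7}, and then iterating to recover tangential regularity of order $s$, yields the claimed bound on $|\Lambda^s\nabla^{\mu,\gamma}u_j|_2$.

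The main obstacle is the bookkeeping: one must verify that in each monomial of $F$ the $h_1$ factor is never acted on by an operator costing more than $1/2$ derivative, since otherwise $H^{s+1/2}$ would not suffice. This is automatic here because the only operations applied to $h_1$ come from the algebraic expansion of $P(\Sigma)$ and at most one tangential derivative, while the rest of the regularity budget is absorbed by the $H^{t_0+1}$ factors (uniformly bounded in $L^\infty$). The restriction $s\leq t_0$ is precisely what makes the product estimate above applicable. As for the $\psi$-dependence: since $\psi$ enters only through the Dirichlet boundary data, one reduces to the homogeneous case by a smooth lift $\psi\mapsto \chi(z)\psi$ carried out once and for all; this lift contributes the factor $|\mathfrak{P}\psi|_{H^{t_0+1/2}}$ and explains why $\psi$ is measured at this fixed index rather than at $H^s$, completing the inductive scheme.
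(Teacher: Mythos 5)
First, note that the paper does not actually prove Proposition \ref{c4:a8}: Appendix \ref{c4:appendixA} states it for convenience and refers to \cite{david}, Chapter 3 and Appendix A, for the proof. So the comparison can only be with the argument given there, which your sketch resembles in outline (induction on $j$, variational formulation on the flat strip, divergence-form source built from shape derivatives of $P(\Sigma_t^{\epsilon})$).

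There is, however, a genuine gap at the step you describe as ``automatic''. The matrix $P(\Sigma_t^{\epsilon})$ is built from the Jacobian $J_{\Sigma_t^{\epsilon}}$, which contains $\nabla^{\gamma}\zeta$; hence $dP(\Sigma_t^{\epsilon})(h_1)$ genuinely contains $\nabla^{\gamma}h_1$, a \emph{full} tangential derivative, not half of one. With $h_1$ measured only in $H^{s+1/2}$, the factor $\nabla^{\gamma}h_1$ lies in $H^{s-1/2}$, so the monomial $dP(\Sigma_t^{\epsilon})(h_1)\nabla^{\mu,\gamma}u_{j-1}$ can at best be placed in $H^{s-1/2}$ by any product estimate of the type you invoke (the other factor being bounded in $H^{t_0+1/2}$-type norms). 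Feeding this into the variational/elliptic estimate for a divergence-form source then yields $\Lambda^{s-1/2}\nabla^{\mu,\gamma}d^{j}\mathfrak{A}_{\psi}(\Gamma)(h,k)\in L^2$, which is half a derivative short of the claim. This missing half derivative is precisely the nontrivial content of Proposition \ref{c4:a8} relative to Proposition \ref{c4:a7} (where $h_1\in H^{t_0+1}$ makes $\nabla^{\gamma}h_1$ an $L^{\infty}$ multiplier), and it cannot be recovered by sharper bookkeeping of the Fa\`a di Bruno expansion alone. The proof in \cite{david} recovers it by a duality argument: one tests the equation for $u_j$ against the solution of a dual problem, uses the symmetry of the bilinear form $\int P(\Sigma_t^{\epsilon})\nabla^{\mu,\gamma}\cdot\nabla^{\mu,\gamma}\cdot$ to integrate by parts once more, and splits the derivative falling on $h_1$ through an $H^{1/2}$--$H^{-1/2}$ pairing, the excess regularity being absorbed by the dual solution and by $\nabla^{\mu,\gamma}u_{j-1}$, both controlled in $H^{t_0+1/2}$ (this is also where the requirement $\psi\in H^{t_0+1/2}$, i.e.\ the factor $\vert\mathfrak{P}\psi\vert_{H^{t_0+1/2}}$ rather than $\vert\mathfrak{P}\psi\vert_{H^{s}}$, enters). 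Your proposal would need to incorporate this duality step (or an equivalent good-unknown decomposition of $d\mathfrak{A}_{\psi}$) to close the induction.
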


\section{Extensions on Beppo-Levy Spaces}\label{c4:appendixB}
This section contains proofs of the existence and regularity of extensions used in Section \ref{c4:equivalence}.
\begin{theorem}
Let $s\in\R^+$ and $k\in\N$. Let us denote $\mathcal{S}_j = (-(j+1),j)\times\mathbb{R}^d$ for all $j\in\mathbb{N}$. Then, there exists an extension 

\begin{displaymath}
P :
\left.
  \begin{array}{rcl}
    H^{s,k}(\mathcal{S}_0) &\rightarrow &H^{s,k}(\mathcal{S}_j)  \\
    u&\mapsto &Pu \\
  \end{array}
\right.
\end{displaymath}
such that: 
\begin{equation}
\forall u\in H^{s,k}(S_0), \qquad\vert u\vert_{H^{s,k}(S_j)} \leq  C(k,j)\vert u\vert_{H^{s,k}(S_0)} \label{c4:prolongementth2}
\end{equation}
where $C(k,j)$ only depends on $k$ and $j$. 
\end{theorem}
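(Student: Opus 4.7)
The plan is to extend $u$ from $\mathcal{S}_0=\mathbb{R}^d\times(-1,0)$ to $\mathcal{S}_j=\mathbb{R}^d\times(-(j+1),j)$ by a Seeley-type (higher-order) reflection across each of the two horizontal boundaries $z=0$ and $z=-1$. First I would fix $k+1$ distinct positive reals $\lambda_1,\dots,\lambda_{k+1}\in(0,1/(j+1))$ and solve the $(k+1)\times(k+1)$ Vandermonde system
\begin{equation*}
\sum_{m=1}^{k+1} a_m(-\lambda_m)^l=1,\qquad l=0,1,\dots,k,
\end{equation*}
which is uniquely solvable since the nodes $-\lambda_m$ are distinct. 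I would then define
\begin{equation*}
(Pu)(X,z)=\begin{cases} u(X,z), & z\in(-1,0),\\ \sum_{m=1}^{k+1} a_m\,u(X,-\lambda_m z), & z\in[0,j),\\ \sum_{m=1}^{k+1} a_m\,u(X,-1-\lambda_m(z+1)), & z\in(-(j+1),-1].\end{cases}
\end{equation*}
The constraint $\lambda_m<1/(j+1)$ guarantees that $-\lambda_m z\in(-1,0)$ on $[0,j)$ and $-1-\lambda_m(z+1)\in(-1,0)$ on $(-(j+1),-1]$, so the formula makes sense pointwise and defines an element of $L^2_{loc}(\mathcal{S}_j)$.

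The Vandermonde condition is designed so that the one-sided $z$-derivatives up to order $k$ match across each interface. For example, at $z=0$ one has
\begin{equation*}
\partial_z^l(Pu)(X,0^+)=\Big(\sum_{m=1}^{k+1} a_m(-\lambda_m)^l\Big)\partial_z^l u(X,0^-)=\partial_z^l u(X,0^-),\qquad 0\le l\le k,
\end{equation*}
and the analogous identity holds at $z=-1$. Consequently the distributional derivative $\partial_z^l Pu$ agrees with the piecewise-classical derivative (no jump or Dirac contribution) across the two gluing surfaces, so $\partial_z^l Pu\in L^2_{loc}(\mathcal{S}_j)$ for every $l\le k$, which is exactly the requirement to place $Pu$ in the Beppo-Levy space $H^{s,k}(\mathcal{S}_j)$ once the horizontal regularity is controlled.

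For the norm estimate \eqref{c4:prolongementth2} I would use that $\Lambda^{s-l}$ is a Fourier multiplier in the variable $X$ alone, hence commutes both with $\partial_z^l$ and with the vertical affine maps defining $Pu$. For each $l\in\{0,\dots,k\}$, using $|\sum_m c_m|^2\le(k+1)\sum_m|c_m|^2$ and the change of variable $z'=-\lambda_m z$ (Jacobian $1/\lambda_m$),
\begin{equation*}
\int_{\mathbb{R}^d}\int_0^j|\Lambda^{s-l}\partial_z^l(Pu)|^2\,dz\,dX\le (k+1)\sum_{m=1}^{k+1} a_m^2\lambda_m^{2l-1}\int_{\mathbb{R}^d}\int_{-1}^0|\Lambda^{s-l}\partial_z^l u|^2\,dz'\,dX,
\end{equation*}
together with an analogous bound on $(-(j+1),-1]$ and the trivial identity on $(-1,0)$. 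Summing over $l$ yields $\vert Pu\vert_{H^{s,k}(\mathcal{S}_j)}\le C(k,j)\vert u\vert_{H^{s,k}(\mathcal{S}_0)}$, with $C(k,j)$ depending only on $k,j$ through the $a_m$ and the $\lambda_m$.

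The main point of care in this plan is the bookkeeping: one must balance the requirement $\lambda_m<1/(j+1)$ (so that reflected points stay inside $\mathcal{S}_0$ uniformly on $\mathcal{S}_j$) against the fact that the $\lambda_m$ must be distinct (so that the Vandermonde system remains solvable). Since any $k+1$ distinct numbers in the open interval $(0,1/(j+1))$ will do, this is a purely algebraic constraint and presents no real analytic obstruction; the entire difficulty is absorbed into the dependence of $C(k,j)$ on the resulting Vandermonde coefficients.
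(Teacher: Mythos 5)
Your construction is correct and is essentially the paper's: a Seeley-type reflection whose coefficients solve a Vandermonde system so that the one-sided $z$-derivatives match at the gluing interface, the matching killing the Dirac contributions in the distributional derivatives, followed by a change of variables in $z$ for the norm bound. The only differences are minor: you reach $\mathcal{S}_j$ in one step by confining the reflection parameters to $(0,1/(j+1))$, whereas the paper extends to the adjacent strip and iterates; and you impose $k+1$ matching conditions (orders $0$ through $k$) where $k$ suffice --- note that the order-$k$ one-sided trace need not exist for a general $u\in H^{s,k}(\mathcal{S}_0)$, but since continuity through order $k-1$ already rules out singular terms in $\partial_z^k Pu$ (checked, as in the paper, by integration by parts for smooth $u$ and then by density using your norm estimate), this redundancy is harmless.
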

\begin{proof}
We first construct an extension to $H^{s,k}((-1,1)\times\mathbb{R}^d)$.
\par

The proof requires a small adaptation from the case $k=1$ which is proved in \cite{brezis}. When $k=1$, $u$ is extended by reflection:
\begin{displaymath}
\tilde{u}(X,z) = 
\left\lbrace
  \begin{array}{rcl}
    &u(X,z) \quad &\forall z\in (-1,0)  \\
    &u(X,-z) \quad &\forall z\in (0,1).
  \end{array}
\right.
\end{displaymath}

But such extension $\tilde{u}$ is indeed in $H^{s,1}(S_1)$ but is not in general in $H^{s,2}(S_1)$ since derivatives in $z$ of $\tilde{u}$ differs for $z=0$: 
\begin{equation*}
\partial_z \tilde{u}(X,0^+) \neq -\partial_z \tilde{u}(X,0^-) 
\end{equation*}
We should therefore define an extension which derivatives of order $i\leq k-1$ have the same value in $0^+$ and in $0^-$. We are looking for an extension under the form:
\begin{displaymath}
Pu(X,z) = 
\left\lbrace
  \begin{array}{rcl}
    u(X,z) &\forall z\in (-1,0) \\
    \sum_{i=0}^{k-1} c_i u(X,-\alpha_i z) &\forall z\in (0,1).
  \end{array}
\right.
\end{displaymath}

The condition over the $\alpha_i$ and $c_i$, $0\leq i\leq k-1$ is:
\begin{equation*}
\forall 0\leq j\leq k-1,\qquad \sum_{i=0}^{k-1} c_i (-\alpha_i)^j \partial_z^j u(X,0^+) = \partial_z^j u(X,0^-)
\end{equation*} 

i.e. $\sum_{i=0}^k c_i (-\alpha_i)^j = 1$. We should therefore find $(c_0,...,c_{k-1})$ and $(\alpha_0,...,\alpha_{k-1})$ such that:
\begin{equation}
 \begin{pmatrix}
1&\cdots&1 \\
-\alpha_1&\cdots &-\alpha_{k-1} \\
(-\alpha_1)^2&\cdots&(-\alpha_{k-1})^2 \\
\vdots&\vdots&\vdots
\end{pmatrix}
 \begin{pmatrix}
c_0 \\
\vdots \\
\vdots \\
c_{k-1}
\end{pmatrix}
=
 \begin{pmatrix}
1 \\
\vdots \\
\vdots \\
1
\end{pmatrix}\label{c4:matrixextension}
\end{equation}
which is a Vandermonde system. It suffices to take the $\alpha_i$ distincts, non zero, and taken in $(0,1)$ in order for  $u(X,-\alpha_i z)$ to make sense. Then the $c_i$ are defined as solutions of the Vandermonde system \eqref{c4:matrixextension}.
\par\vspace{\baselineskip}

Now, let us prove that such defined extension $Pu$ maps continuously $H^{s,1}(S_1)$ into \newline $H^{s,1}((-1,1)\times\mathbb{R}^d)$. \par\vspace{\baselineskip}

1) Case $u\in C^{\infty}(\overline{\mathcal{S}_0})$
\par

It is clear that $Pu$ is measurable. Let us check that it is $L^2((-1,1),H^s(\mathbb{R}^d)$:
\begin{align*}\int_{-1}^1\int_{\mathbb{R}^d} \Lambda^{2s} \vert Pu(X,z)\vert^2dXdz &= \int_{-1}^0\int_{\mathbb{R}^d} \Lambda^{2s} \vert u(X,z)\vert^2dXdz + \int_{0}^1\int_{\mathbb{R}^d} \Lambda^{2s} \vert \sum_{i=0}^{k-1} c_i u(X,-\alpha_i z)\vert^2dXdz \\
&\leq \vert u\vert_{H^{s,k}((-1,0)\times\mathbb{R}^d)}^2 + \sum_{i=0}^{k-1}\frac{c_i^2}{\alpha_i}\int_{-\alpha_i}^0\int_{\mathbb{R}^d}\Lambda^{2s}\vert u(X,z)\vert^2dXdz \\
&\leq C_k \vert u\vert_{H^{s,k}((-1,0)\times\mathbb{R}^d)}^2 .
\end{align*}
The first inequality is Cauchy-Schwarz's inequality. The constant $C_k$ only depends on $k$. Now, let us check that $Pu$ is $H^{k-j}((-1,1);H^{s-k+j}(\mathbb{R}^d))$ for all $0\leq j \leq k$. Let $\varphi\in C_0^{\infty}((-1,1)\times\mathbb{R}^d)$. One set $j$ such that $0\leq j\leq k$, and computes $\partial_z^j (Pu)$ in the distributional sense of $D'((-1,1)\times\R^d)$:

\begin{align*}\int_{-1}^1\int_{\mathbb{R}^d}   u(X,z)\partial_z^j\varphi(X,z)dXdz &= \int_{-1}^0\int_{\mathbb{R}^d}   u(X,z)\partial_z^j\varphi(X,z)dXdz \\&+ \int_{0}^1\int_{\mathbb{R}^d}   \sum_{i=0}^{k-1} c_i u(X,-\alpha_i z)\partial_z^j\varphi(X,z)dXdz \\
&= \int_{-1}^0\int_{\mathbb{R}^d}   (-1)^j\partial_z^j u(X,z)\varphi(X,z)dXdz\\ &+ \sum_{l=0}^{j-1} (-1)^{j-1-l} \int_{\mathbb{R}^d} \partial_z^{j-l-1}u(X,0)\partial_z^{l}\varphi(X,0)dXdz \\
&+ \int_{0}^1\int_{\mathbb{R}^d}(-1)^j\sum_{i=0}^{k-1}c_i (-\alpha_i)^j(\partial_z^j u)(X,-\alpha_i z)\varphi(X,z)dXdz \\
&+ \sum_{l=0}^{j-1} (-1)^{j-l} \int_{\mathbb{R}^d} c_i(-\alpha_i)^{j-l-1}\partial_z^{j-l-1}u(X,0)\partial_z^l\varphi(X,0)dXdz
\end{align*}
by integrating by parts, (recall that $u\in C^{\infty}(\overline{\mathcal{S}_1})$). Note that these calculus does not make sense for $u\in H^{s,k}(\mathcal{S}_0)$ since $\varphi$ is not in $C_0^{\infty}(\mathcal{S}_0)$.
\par

Since we have the identity 

$$\forall 0\leq j\leq k-1, \sum_{i=0}^{k-1} c_i (-\alpha_i)^j \partial_z^j u(X,0) = \partial_z^j u(X,0)$$
the two integrals over $\mathbb{R}^d$ cancel one another. Therefore, we have: 
\begin{align}\int_{-1}^1\int_{\mathbb{R}^d}   u(X,z)\partial_z^j\varphi(X,z)dXdz &=\int_{-1}^0\int_{\mathbb{R}^d}   (-1)^j\partial_z^j u(X,z)\varphi(X,z)dXdz\nonumber \\&+ \int_{0}^1\int_{\mathbb{R}^d}\sum_{i=0}^{k-1}c_i (\alpha_i)^j(\partial_z^j u)(X,-\alpha_i z)\varphi(X,z)dXdz \label{c4:prolongement}\end{align}
which prove that $$\partial_z^j(Pu) = (1-sgn(z))\partial_z^j u(X,z) + (1+sgn(z))\sum_{i=0}^{k-1}c_i (-\alpha_i)^j(\partial_z^j u)(X,-\alpha_i z) $$
on $\mathcal{D}'((-1,1)\times\mathbb{R}^d)$, with the notation $sgn(z)=1$ if $z\geq 0$ and $sgn(z)=-1$ if $z<0$. It is then quite easy to check that $\partial_z^j(Pu) \in H^{k-j}((-1,1);H^{s-k+j}(\mathbb{R}^d)$ (proceed as for case $j=0$) with 
\begin{equation}\vert Pu\vert_{H^{k-j}((-1,1);H^{s-k+j}(\mathbb{R}^d)} \leq C_k  \vert u\vert_{H^{k-j}((-1,0);H^{s-k+j}(\mathbb{R}^d)}\label{c4:control_prolongement}\end{equation}
2) Case $u\in  H^{s,k}(\mathcal{S}_0)$
\par

By density of $C^{\infty}(\overline{\mathcal{S}_0})$ in $  H^{s,k}(\mathcal{S}_0)$, it is easy to check that (\ref{c4:prolongement}) stands true in $H^{s,k}(\mathcal{S}_0)$, and the result is proved, with the control (\ref{c4:control_prolongement}).
\\
We can construct by exactly the same way an extension of $u\in H^{s,k}(\mathcal{S}_0)$ into $H^{s,k}((-2,0)\times\mathbb{R}^d)$ and then combine the two extensions to have an extension to $H^{s,k}(\mathcal{S}_1)$. By the same way it is easy to construct the extension into  $H^{s,k}(\mathcal{S}_k)$.  Finally, the main  of the theorem is proved, with the control (\ref{c4:prolongementth2}). $\qquad \Box $ 
\end{proof}

\begin{proposition}
Using the notations of Definition \ref{c4:strips}, let $\phi$ be a distribution on $[0;T_0]\times\mathcal{S}$  with the following regularity: $$\forall 0\leq k\leq N,\quad \partial_t^k\nabla^{\mu,\gamma}\phi\in L^\infty((0;T_0);H^{N-k-1/2,N-k-1}(\mathcal{S}))$$ Let $\tilde{\phi}$ be an extension of $\phi$ on the strip $\mathcal{S}_l$, and $\tilde{\Phi}=\tilde{\phi}\circ{\Sigma_t^\epsilon}^{-1}$.
Then, we have, for all $k\leq N $: $$\partial_t^k\nabla^{\mu,\gamma} \tilde{\Phi}\in L^{\infty}((0;T_0);H^{N-k-1/2,N-k-1}(\mathcal{S}^*)$$ with a bound uniform in $t$ and $\epsilon$.
\end{proposition}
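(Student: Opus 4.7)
The approach is to unpack the composition $\tilde\Phi=\tilde\phi\circ(\Sigma_t^\epsilon)^{-1}$ via the chain rule, and then invoke product and composition estimates in the anisotropic Beppo-Levi spaces $H^{s,k}$. Explicitly, $(\Sigma_t^\epsilon)^{-1}(t,X,z)=(X,\sigma(t,X,z))$ with
$$\sigma(t,X,z)=\frac{z-\epsilon\zeta(t,X)}{1+\epsilon\zeta(t,X)},$$
and the condition $1+\epsilon\zeta\geq h_{\min}$ makes $\sigma$ depend smoothly on $\zeta$. Proposition \ref{c4:propregularity} furnishes $C_2$-bounds, uniform in $\epsilon$ and $t$, for $\partial_t^j\zeta$ in $H^{N-j}(\R^d)$, from which one gets analogous $C_2$-bounds for the space-time derivatives of $\sigma$ (it is affine in $z$, so the $z$-derivatives pose no problem). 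The hypothesis also bounds $\partial_t^k\nabla^{\mu,\gamma}\tilde\phi$ in $H^{N-k-1/2,N-k-1}(\S_l)$ uniformly in $\epsilon$ and $t$, directly from the extension Theorem \ref{c4:extension}.

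Applying Faà di Bruno, $\partial_t^k\nabla^{\mu,\gamma}\tilde\Phi$ decomposes as a finite linear combination of terms of the form
$$\big((\partial_t^{k_0}\partial_X^{\alpha_0}\partial_z^{m_0}\tilde\phi)\circ(\Sigma_t^\epsilon)^{-1}\big)\,\prod_{i=1}^p\partial_t^{k_i}\partial_X^{\alpha_i}\partial_z^{m_i}\sigma,$$
with total orders bounded by $k+1$. The plan is then twofold. First, establish a composition lemma: for any diffeomorphism $\Theta(X,z)=(X,\theta(X,z))$ with $\partial_z\theta$ bounded above and bounded below, and $\theta$ of sufficient tangential Sobolev regularity, the map $u\mapsto u\circ\Theta$ sends $H^{s,k}(\S_l)$ into $H^{s,k}(\S^*)$ with operator norm controlled by a polynomial in $|\theta|_{H^{N,N-1}}$. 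For integer $s$, this follows from the chain rule together with Moser-type product estimates; for the fractional order $N-k-1/2$, one interpolates between neighboring integer values, or else reproduces the tangential Littlewood-Paley argument already used for $\Sigma_t^\epsilon$ in \cite{david} Chapter 2. Second, apply standard product estimates in $H^{s,k}$ to the finite products of derivatives of $\sigma$, whose norms are all uniformly controlled by $C_2$.

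The principal obstacle is the bookkeeping of derivatives: in each product appearing in the chain rule expansion, one has to assign the maximal Sobolev index to a single factor while ensuring that the remaining factors admit tangential regularity above $d/2$ so that Moser's inequality applies. This is essentially the same distribution argument as in the proofs of Propositions \ref{c4:a7} and \ref{c4:a8}; the additional time-derivative direction is treated on the same footing as the spatial ones, since Definition \ref{c4:defpsiat} already incorporates mixed space-time derivatives and the Proposition \ref{c4:propregularity} estimates furnish exactly the needed $H^{N-k}$ control on $\partial_t^k\zeta$. Combining all these ingredients yields $\partial_t^k\nabla^{\mu,\gamma}\tilde\Phi\in L^\infty((0;T_0);H^{N-k-1/2,N-k-1}(\S^*))$ with a bound of the form $C_2$, uniform in $t$ and $\epsilon$.
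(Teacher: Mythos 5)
Your proposal is correct in outline and shares the paper's skeleton (reduce everything to a boundedness statement for the composition operator $u\mapsto u\circ(\Sigma_t^\epsilon)^{-1}$ on the spaces $H^{s,k}$, then combine with chain-rule and product estimates), but the technical execution is genuinely different in two places. First, where you expand $\partial_t^k\nablamug\tilde\Phi$ in one shot by Faà di Bruno and then invoke a general composition lemma, the paper proceeds by a double induction (outer on the number of time derivatives $k$, inner on the tangential index $N$), peeling off one derivative at a time via $\nabla_{X,z}\Theta=(\nabla_{X,z}\theta)\circ(\Sigma_t^\epsilon)^{-1}+\nabla_{X,z}\sigma\,(\partial_z\theta)\circ(\Sigma_t^\epsilon)^{-1}$ and feeding each composed factor back into the induction hypothesis; this avoids ever having to state a standalone composition theorem. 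Second, and more substantially, the paper handles the half-integer tangential regularity not by interpolation or Littlewood--Paley but by a direct computation at the base case $N=0$: it writes the Gagliardo--Slobodeckij seminorm of $\Theta(\cdot,z)$ in the horizontal variables and exploits the fact that $\Sigma_t^\epsilon$ fixes $X$, so a change of variables in $z$ alone converts the seminorm of $\Theta$ into that of $\theta$ at the cost of Jacobian factors bounded uniformly in $\epsilon$ by $1+\epsilon\zeta\geq h_{\min}$ and $\vert\epsilon\zeta\vert_\infty\leq C_2$; this is precisely where the three nested strips of Definition \ref{c4:strips} are consumed ($\S_k\subset\Sigma_t^\epsilon(\S_l)$ and $\S_l\subset\Sigma_t^\epsilon(\S_j)$ successively enlarge the $z$-domain of integration). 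Your interpolation route is admissible since $u\mapsto u\circ\Theta$ is linear, but you would need to verify that both integer endpoints $H^{N-k-1}$ and $H^{N-k}$ are actually reachable with the available regularity $\partial_t^j\zeta\in H^{N-j}$, and that the intersection spaces $H^{s,k}$ interpolate componentwise; the paper's direct seminorm computation sidesteps both issues and keeps the $\epsilon$-uniformity completely explicit. (Minor slip: the composition that maps $H^{s,k}(\S_l)$ into $H^{s,k}(\S^*)$ is $u\mapsto u\circ(\Sigma_t^\epsilon)^{-1}$, i.e. $\Theta=(\Sigma_t^\epsilon)^{-1}$ in your lemma, not $\Sigma_t^\epsilon$ itself.)
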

\begin{proof}
The proof consists in proving that if $\Theta$ is a function defined on $[0;T_0]\times\mathcal{S}^*$, with $\theta = \Theta\circ\Sigma_t^{\epsilon}$ such that: $$\forall k\leq N, \partial_t^k \theta \in L^{\infty}((0;T_0);H^{N-k+1/2,N-k}(\mathcal{S}^j))$$ then we have the same regularity for $\Theta$: $$\forall k\leq N, \partial_t^k \Theta \in L^{\infty}((0;T_0);H^{N-k+1/2,N-k}(\mathcal{S}^*))$$ with a control of the norm:
\begin{equation}\vert\partial_t^k\Theta\vert_{L^{\infty}((0;T_0);H^{N-k+1/2,N-k}(\mathcal{S}^*))}\leq C \vert\partial_t^k\theta\vert_{L^{\infty}((0;T_0);H^{N-k+1/2,N-k}(\mathcal{S}^j))}\label{c4:diffeonorm}\end{equation}
where $C$ does not depend on $\epsilon$.

Let us do it by induction on $k$. The induction hypothesis is the following: \par
"For all $N\in\mathbb{N}$ and $\theta\in L^{\infty}(0;T_0;H^{N+1/2,N}(\mathcal{S}^j))$ if $\Theta = \theta\circ{\Sigma_t^{\epsilon}}^{-1}$, then we have: $$ \partial_t^k\Theta\in L^{\infty}(0;T_0;H^{N-k+1/2,N-k}(\mathcal{S}^*))$$ with a control \begin{equation}\vert\partial_t^k\Theta\vert_{L^{\infty}((0;T_0);H^{N-k+1/2,N-k}(\mathcal{S}^*))}\leq C \vert\partial_t^k\theta\vert_{L^{\infty}((0;T_0);H^{N-k+1/2,N-k}(\mathcal{S}^j))}\label{c4:induction}\end{equation}
where $C$ does not depend on $\epsilon$"

1) k = 0\par
Let us prove that $\Theta\in L^{\infty}(0;T_0,H^{N+1/2,N}(\mathcal{S}^*))$. We are using the following characterization of $H^{N+1/2}(\mathbb{R}^d)$: \begin{equation*} \vert u\vert_{H^{N+1/2}(\mathbb{R}^d)}^2 \sim \vert u\vert_{H^{N}(\mathbb{R}^d)}^2 +\int_{\mathbb{R}^d}\int_{\mathbb{R}^d} \frac{\vert D^N u(x)-D^N u(y)\vert^2}{\vert x-y\vert^{2(1/2+d/2)}}dxdy\end{equation*} 
\par

Let first prove that $\Theta\in L^{\infty}((0;T_0);L^2(-(k+1);k),H^{N+1/2}(\mathbb{R}^d))$ with the control \eqref{c4:induction} by induction on $N$. Let the induction hypothesis be:\par
"For for all  $\theta\in L^{\infty}((0;T_0);L^2(-(l+1);l),H^{N+1/2}(\mathbb{R}^d))$,if $ \Theta = \theta\circ{\Sigma_t^{\epsilon}}^{-1}$ then we have:$$\Theta \in L^{\infty}((0;T_0);L^2(-(k+1);k),H^{N+1/2}(\mathbb{R}^d))$$ 
with a norm control such as \eqref{c4:induction}" \par
For $N=0$, we need the previous characterization of $H^{1/2}(\mathbb{R}^d)$:
\begin{align*}
&\int_{\mathbb{R}^d}\int_{\mathbb{R}^d}\int_{-(k+1)}^k \frac{\vert \Theta(X,z) - \Theta(Y,z)\vert^2}{\vert X-Y\vert^{2(1/2+d/2)}}dXdYdz \\
&= \int_{\mathbb{R}^d}\int_{\mathbb{R}^d}\int_{-(k+1)}^k \frac{\vert \theta\circ{\Sigma_t^{\epsilon}}^{-1}(X,z) - \theta\circ{\Sigma_t^{\epsilon}}^{-1}(Y,z)\vert^2}{\vert X-Y\vert^{2(1/2+d/2)}}dXdYdz \\
&=\int_{{\Sigma_t^{\epsilon}}^{-1}(-(k+1))_z}^{{\Sigma_t^{\epsilon}}^{-1}(k)_z}\int_{\mathbb{R}^d}\int_{\mathbb{R}^d} \frac{\vert \theta(X,u) - \theta\circ{\Sigma_t^{\epsilon}}^{-1}(Y,u)\vert^2}{\vert X-Y\vert^{2(1/2+d/2)}}\vert J_{\Sigma_t^{\epsilon}}(X)\vert dXdYdu \\
&\leq \int_{-(l+1)}^l \int_{\mathbb{R}^d}\int_{\mathbb{R}^d} \frac{\vert \theta(X,u) - \theta\circ{\Sigma_t^{\epsilon}}^{-1}(Y,u)\vert^2}{\vert X-Y\vert^{2(1/2+d/2)}}\vert J_{\Sigma_t^{\epsilon}}(X)\vert dXdYdu\\ 
&=  \int_{{\Sigma_t^{\epsilon}}^{-1}(-(l+1))_z}^{{\Sigma_t^{\epsilon}}^{-1}(l)_z} \int_{\mathbb{R}^d}\int_{\mathbb{R}^d} \frac{\vert \theta(X,v) - \theta(Y,v)\vert^2}{\vert X-Y\vert^{2(1/2+d/2)}}\vert J_{\Sigma_t^{\epsilon}}(X)\vert \vert J_{\Sigma_t^{\epsilon}}(Y)\vert dXdYdv \\
&\leq  \int_{-(j+1)}^j \int_{\mathbb{R}^d}\int_{\mathbb{R}^d} \frac{\vert \theta(X,v) - \theta(Y,v)\vert^2}{\vert X-Y\vert^{2(1/2+d/2)}}\vert J_{\Sigma_t^{\epsilon}}(X)\vert \vert J_{\Sigma_t^{\epsilon}}(Y)\vert dXdYdv \\
&\leq C\vert \theta\vert_{L^{\infty}((0;T_0);H^{1/2,0}(\mathcal{S}^j))}
\end{align*}
with $C$ uniform in $\epsilon$. We used the change of variable $(X,z) = \Sigma_t^{\epsilon}(X,u)$ and $(Y,u) = \Sigma_t^{\epsilon}(Y,v)$ in the integrals. The first inequality comes from $\mathcal{S}^k\subset \Sigma_t^{\epsilon}(\mathcal{S}^l) $. The last comes from $\mathcal{S}^l\subset\Sigma_t^{\epsilon}(\mathcal{S}^j)$.\par
%\begin{align*}
%\int_{-(k+1)}^k\int_{\mathbb{R}^d} \vert \Theta (X,z)\vert^2dXdz &= \int_{-(k+1)}^k\int_{\mathbb{R}^d} \vert\theta\circ\Sigma_t^{\epsilon} (X,z)\vert^2dXdz \\
%&=  \int_{\Sigma_t^{\epsilon}^{-1}(\mathcal{S}^k)} \vert\theta (X,u)\vert^2\vert J_{\Sigma_t^{\epsilon}}\vert ^{-1}dXdu \\
%&\leq \int_{\mathcal{S}^l} \vert\theta (X,u)\vert^2\vert J_{\Sigma_t^{\epsilon}}\vert ^{-1}dXdu\\ 
%&\leq C\vert \theta\vert_{L^{\infty}(0;T_0);L^2(-(l+1);l);L^2(\mathbb{R}^d)}
%\end{align*}
Suppose the result true for $N-1$ with $N\geq 1$. Let prove it for $N$. We have:
\begin{align*}\nabla_{X,z}\Theta &= \nabla_{X,z}(\theta(X,\frac{z-\epsilon\zeta}{h_B})) \\
&=(\nabla_{X,z}\theta)(X,\frac{z-\epsilon\zeta}{h_B})+\nabla_{X,z}(\frac{z-\epsilon\zeta}{h_B})(\partial_z\theta)(X,\frac{z-\epsilon\zeta}{h_B}) \end{align*}

We have $(\nabla_{X,z}\theta)\in  L^{\infty}((0;T_0);L^2(-(k+1);k),H^{N-1+1/2}(\mathbb{R}^d))$, so by induction hypothesis, this term is controlled. For the latter one, just remark that $\nabla_{X,z}(\frac{z-\epsilon\zeta}{h_B})\in  H^{t_0}(\mathbb{R}^d) $ , so with classical product estimates: 
\begin{align*}
&\int_{-(k+1)}^k \vert \nabla_{X,z}(\frac{z-\epsilon\zeta}{h_B})(\partial_z\theta)(X,\frac{z-\epsilon\zeta}{h_B})\vert_{H^{N-1+1/2}(\mathbb{R}^d)}^2 dz \\&\leq \int_{-(l+1)}^l \vert \nabla_{X,z}(\frac{z-\epsilon\zeta}{h_B})\vert_{H^{t_0}}^2\vert(\partial_z\theta)(X,\frac{z-\epsilon\zeta}{h_B})\vert_{H^{N-1+1/2}(\mathbb{R}^d)}^2 dz \\
&\leq C \int_{-(k+1)}^k \vert(\partial_z\theta)(X,\frac{z-\epsilon\zeta}{h_B})\vert_{H^{N-1+1/2}(\mathbb{R}^d)}^2 dz \\
&= C\vert (\partial_z\theta)\circ{{\Sigma_t^{\epsilon}}^{-1}}\vert_{L^2((-(k+1);k);H^{N-1+1/2}(\mathbb{R}^d))} \\
&\leq C \vert \partial_z\theta\vert_{L^2((-(l+1);l);H^{N-1+1/2}(\mathbb{R}^d))}  \\
&\leq C \vert \partial_z\theta\vert_{L^2((-(j+1);j);H^{N-1+1/2}(\mathbb{R}^d))} 
	\end{align*}
using the induction hypothesis and $\mathcal{S}_l\subset \mathcal{S}_j$. The constant $C$ does not depends either on $t$ or $\epsilon$, so the result is proved. \par

%In order to complete the proof of the case $k=0$, we now have to show that $$\partial_z^k\Theta \in L^{\infty}((0;T_0);L^2(-(k+1);k),H^{N-1+1/2}(\mathbb{R}^d))$$ for all $0\leq k\leq N$. This point is quite easy, since we remark that : $$\partial_z^k \Theta = (\frac{1}{h_B})^k(\partial_z \theta)\circ {\Sigma_t^{\epsilon}}^{-1}$$
%so we can use the previous proof, using the fact that $\frac{1}{h_B}$ is $L^{\infty}(0;T_0);H^{t_0+t_0\vee 2+3/2}(\mathbb{R}^d))$ with $t_0>d/2$, and standards products estimate in Sobolev spaces will give the result. \\

2) Now, suppose this result is proved for $k-1$, with $k\geq 1$. Let prove it for $k$. We write:
\begin{align*}\partial_t \Theta &= \partial_t\theta(t,X,\frac{z-\epsilon\zeta}{h_B})\\
&=(\partial_t\theta)(t,X,\frac{z-\epsilon\zeta}{h_B}) + \partial_t(\frac{z-\epsilon\zeta}{h_B})(\partial_z\theta)(t,X,\frac{z-\epsilon\zeta}{h_B})
\end{align*}
Since $\partial_t\theta$ is $L^{\infty}(0;t0);H^{N-1+1/2,N-1}(\mathbb{R}^d)$,  by induction hypothesis the $(k-1)-$th time derivative of the first term of the r.h.s. is controlled as in \eqref{c4:induction}.\par

 The same argument stands for 	
$(\partial_z\theta)(t,X,\frac{z-\epsilon\zeta}{h_B})$. The term $\partial_t(\frac{z-\epsilon\zeta}{h_B})$ is harmless, since it is in $L^{\infty}((0;T_0);H^{N-1+1/2,N-1})$ with $N-1\geq t_0$, so standard product estimates in Sobolev spaces will finally give the control of the second term of the $(k-1)-$th time derivative of the r.h.s.
$\qquad \Box $ 
\end{proof}

\end{appendix}

The author has been partially funded by the ANR project Dyficolti ANR-13-BS01-0003-01.

\bibliographystyle{plain}

\bibliography{defaut_convergence}

\end{document}